\theoremstyle{plain}
\newtheorem{theorem}{Theorem}[section]
\newtheorem{lemma}[theorem]{Lemma}
\numberwithin{equation}{section}
\theoremstyle{definition}
\newtheorem{definition}[theorem]{Definition}
\newtheorem{example}[theorem]{Example}
\newtheorem{proposition}[theorem]{Proposition}
\newtheorem{remark}[theorem]{Remark}
\theoremstyle{remark}
\newcommand{\CC}{\mathbb{C}}
\newcommand{\QQ}{\mathbb{Q}}
\newcommand{\RR}{\mathbb{R}}
\newcommand{\ZZ}{\mathbb{Z}}
\newcommand{\NN}{\mathbb{N}}
\newcommand{\SR}{\mathcal{SR}}
\newcommand{\CP}{\mathbb{C}P}
\newcommand{\cR}{\mathcal{R}}
\def\skp#1{\vskip#1in\relax}
\def\nd{\noindent}
\newcommand{\coker}{\operatorname{coker}}
\def\blue#1{\textcolor{blue}{#1}}
\def \mbf{\mathbf}
\def \mc{\mathcal}
\def \ms{\mathcal}
\begin{document}

\title[Equivariantly formal toric orbifolds]{Infinite families of equivariantly formal toric orbifolds}

\author[A. Bahri]{Anthony Bahri}
\address{Department of Mathematics, Rider University, NJ, USA}
\email{bahri@rider.edu}
%\thanks{The first author was supported in part by  grant number 210386 from the Simons Foundation
%and the award of research leave from Rider University. He is grateful also for the hospitality
%of the Princeton University Mathematics Department during the spring of 2015.}
%

\author[S. Sarkar]{Soumen Sarkar}
\address{Department of Mathematics, Indian Institute of Technology Madras,
Chennai, India}
\email{soumensarkar20@gmail.com}
%\thanks{The second author would like to thank Pacific Institute for 
%the Mathematical Sciences and University of Regina for financial support. }

\author[J. Song]{Jongbaek Song}
\address{Department of Mathematical Sciences, KAIST, Daejeon, Republic of Korea}
\email{jongbaek.song@gmail.com}
%\thanks{The third author was supported by Basic Science Research Program 
%through the National Research Foundation of Korea(NRF) funded by 
%the Ministry of Education(No. NRF-2013R1A1A2007780)}
%
%\author{Etc..}
%%\address{}
%%\curraddr{}
%%\email{}
%\thanks{ }

\subjclass[2010]{13F55, 14M25, 52B11, 57R18, 55N91}

\keywords{toric variety, orbifold, singular cohomology, equivariantly formal, simplicial wedge, J-construction}
\dedicatory{}
%\date{\today}

\begin{abstract}
The simplicial wedge construction on simplicial complexes 
and simple polytopes has been used by a variety of authors 
to study toric and related spaces, including non-singular 
toric varieties, toric manifolds, intersections of quadrics and 
more  generally, polyhedral products. In this paper we extend 
the analysis to include toric orbifolds. Our main results yield 
infinite families of toric orbifolds, derived from a given one, 
whose integral cohomology is free of torsion and is 
concentrated in even degrees, a property which might be 
termed \emph{integrally equivariantly formal}. In all cases, 
it is possible to give a description of the cohomology ring 
and to relate it to the cohomology of the original orbifold.
\end{abstract}
\maketitle

\section{Introduction}\label{sec_introduction}
Compact smooth toric varieties $X$, and their topological 
counterparts, toric manifolds\footnote{Though Davis and 
Januszkiewicz \cite{DJ} introduced the name toric manifolds, 
in recent literature they are sometimes called {\em quasitoric 
manifolds\/}.}, have their integral cohomology  torsion free 
and concentrated in even degrees. Consequently,  the action  
of the compact $n$-dimensional torus $T = (S^1)^n$ on $X$, 
allows for a satisfying description of the integral cohomology 
ring arising from the collapsing Serre spectral sequence of 
the canonical fibration
\begin{equation}
X \overset{\iota}{\hookrightarrow} ET \times_{T} X \xrightarrow{\pi}  BT\cong (\CP^\infty)^{n}.
\end{equation}
Franz and Puppe note in \cite[Theorem 1.1]{FP}, that for 
compact smooth toric varieties, or more generally, a finite 
$T$-CW complexes, this is equivalent to the map $\iota^{\ast}$ 
inducing an isomorphism
\begin{equation}\label{btfree}
H^{\ast}\big(ET \times_{T} X\big)\otimes_{H^{\ast}(BT)}\mathbb{Z} \longrightarrow H^{\ast}(X).
\end{equation}

\nd The term {\em integrally equivariantly formal\/} suggests 
itself for $X$, by analogy with the case of rational coefficients, 
see for instance \cite{GKM}. 
In this smooth setting, \eqref{btfree} leads to a description of 
the ring $H^{\ast}(X)$ as a quotient of a Stanley--Reisner ring 
by a linear ideal.

The situation for {\em singular\/} toric spaces is not so nice; 
examples with non-vanishing cohomology in odd degrees 
abound, \cite{Fis}, \cite[Chapter 12]{CLS}. In the case of 
orbifolds however, the results of \cite{BNSS} establish 
tractable sufficient conditions ensuring that the integral 
cohomology is torsion free and concentrated in even degree. 
So, for such spaces an integral equivariant formality holds 
and the ring structure of the cohomology can be identified 
explicitly, in a manner entirely analogous to the smooth case 
described above; the Stanley--Reisner ring is replaced by a ring
of certain {\em piecewise polynomials} or \emph{weighted 
Stanley--Reisner rings}, see \cite[Proposition 2.2]{BFR} and 
\cite[Theorem 5.3]{BSS}.

In this sequel to \cite{BNSS}, we extend the program to a 
class of infinite families of toric orbifolds, derived from a given 
one by a combinatorial construction, known variously  as: 
{\em the simplicial wedge construction\/} \cite{PB}, \cite{Ew}, 
{\em the doubling construction\/} \cite{LDM} and  
{\em the J-construction\/} \cite{BBCG18, BBCG15}.

The construction associates to a sequence of positive 
integers $J = (j_1,j_2,\ldots,j_m)$ and an $(n-1)$-dimensional 
simplicial complex $K$ on $m$ vertices, a new simplicial complex 
$K(J)$ on $d(J) = j_1 + j_2 + \cdots + j_m$ vertices, of dimension 
$d(J)-m+n-1$. Equally well, it associates to a simple polytope 
$Q$ of dimension $n$ with $m$ facets, a new simple polytope 
$Q(J)$ of dimension $d(J)-m+n$ having $d(J)$ facets.
(Notice that $m-n$ = $d(J) - (d(J)-m+n)$.)

As outlined in Section \ref{sec_toric_orb_orb_lens} below, 
a $2n$-dimensional toric orbifold  $X(Q,\lambda)$ is specified by an 
$\mathcal{R}$-characteristic pair $(Q,\lambda)$ where $Q$ is an 
$n$-dimensional simple polytope and $\lambda\colon \mathcal{F}(Q) 
\to \mathbb{Z}^{n}$ is a function from the set of facets 
of $Q$ which satisfies certain conditions. 

Section \ref{sec_J-const} describes the way in which each sequence 
$J$, determines from $X(Q,\lambda)$ a new 
$2\big(d(J)-m+n\big)$-dimensional toric orbifold 
$X_{(J)} \mathrel{\mathop:}= X(Q_{(J)},\lambda_{(J)})$. 
Our goal here is two-fold:\\[-6mm]
\begin{enumerate}
\item To confirm that if $X(Q,\lambda)$ satisfies the sufficiency 
conditions of \cite[Theorem 1.1]{BNSS}, which ensure that the 
integral cohomology is torsion free and concentrated in even 
degree,  then as $J$ varies, the spaces $X(Q_{(J)},\lambda_{(J)})$  
yield an infinite family of similarly integrally equivariantly formal 
orbifolds.
\item To relate the integral cohomology ring of $X_{(J)}$ to that of $X$.
\end{enumerate}

The elementary  theory of toric orbifolds is reviewed in Section 
\ref{sec_toric_orb_orb_lens}, drawing from the expositions to be 
found in \cite{DJ}, \cite{PS} and \cite{BSS}. The emphasis is on 
tracking the finite isotropy groups and the singularities resulting
from the failure of the $\mathcal{R}$--characteristic function 
$\lambda$ to satisfy the regularity condition which ensures the 
smoothness of a toric space.

The orbifold analogue of a CW-complex, called a {\bf q}-CW 
complex,  developed rationally in the toric space setting by 
Poddar and Sarkar \cite{PS} and integrally in \cite{BNSS}, 
is reviewed in Section \ref{sec_bdseq_and_q-cell}. 
The {\bf q}-CW complex structure is connected to the underlying 
combinatorics by the notion of a {\em retraction sequence\/} 
for a simple polytope, introduced in \cite{BSS}. The outcome 
of these observations allows  for an iterated construction of 
the toric orbifold via a sequence of cofibrations which keep 
track of the isotropy, and hence singularities, as they arise. 
A condition involving the divisibility of all the orders of the 
isotropy groups, which emerge from {\em particular\/} retraction 
sequences, proves sufficient to establish the integral 
equivariant formality of the toric orbifold. The cornerstone 
of several of the results presented here,  is the next theorem.
\skp{0.05}
\nd {\bf Theorem \ref{thm_no-torsion}} \cite[Theorem 4.6]{BNSS}  
{\em Let\/} $X := X(Q,\lambda)$ {\em be a toric orbifold.\/} 
{\em If for each prime number $p$ there is a retraction sequence\/} 
$\{(B_j, E_j, b_j)\}_{j=1}^{\ell}$ {\em such that\/} 
gcd$\{p,|G_{E_j}(b_j)|\} = 1$ {\em for all\/} $j$, {\em then\/} 
$H_{\ast}(X; \mathbb{Z})$  {\em has no torsion and 
$H_{\text{odd}}(X; \mathbb{Z})$ is trivial\/}.
\skp{0.05}
In Section \ref{sec_J-const}, various formulations of the simplicial 
wedge construction are introduced on both simplicial complexes
and simple polytopes. This is followed by a description of the 
transition of $\mathcal{R}$-characteristic pairs
$$X(Q,\lambda)  \rightsquigarrow  X(Q_{(J)},\lambda_{(J)})$$
\nd for each sequence $J = (j_1,j_2,\ldots,j_m) \in \mathbb{N}^m$.  
Included here is the verification, in Lemma 
\ref{lem_lambda(J)_satisfies_the_condition}, that if the 
$\mathcal{R}$-characteristic map $\lambda$ satisfies the 
orbifold condition \eqref{def_R-char_fun}, then $\lambda_{(J)}$ 
does too. This is a modification of the argument for the smooth 
case, \cite[Theorem 3.2]{BBCG15}.  The construction of 
$Q_{(J)}$ from $Q$ is an iterative process involving a sequence 
of ``doubling'' operations. Though it is well  known that the result 
is independent of the particular sequence chosen, we include here 
Proposition \ref{prop_(2,1,...1)(2,1,...,1)=(3,1,...,1)} and the details 
of a proof for completeness.

Section \ref{sec_homology_of_X_{(J)}} is devoted to the homology 
groups of the toric orbifold $X_{(J)}:=X(Q_{(J)}, \lambda_{(J)})$; 
the main theorem is the following.
\skp{0.05}
\nd {\bf Theorem \ref{thm_X_satisfies_assump_then_X(J)_eq_formal}} 
{\it Let $X := X(Q,\lambda)$ be a toric orbifold satisfying the 
assumption of Theorem $3.6$. Then, $H_{\ast}(X_{(J)}; \mathbb{Z})$ 
is torsion free and $H_{\text{odd}}(X_{(J)}; \mathbb{Z})$ vanishes 
for arbitrary $J = (j_1,j_2,\ldots,j_m) \in \mathbb{N}^m$}.
\skp{0.05}
The paper concludes with a discussion of the integral cohomology 
ring of the toric orbifolds $X_{(J)}$. When the toric orbifold 
$X(Q,\lambda)$ satisfies the hypothesis of Theorem 
\ref{thm_no-torsion}, then Theorem \ref{thm_cohom_of_X(J)}
characterizes the cohomology of $X_{(J)}$ as 
the quotient of a certain {\em weighted\/} Stanley--Reisner ring
$w\mathcal{SR}[Q_{(J)},\lambda_{(J)}]$ by a linear ideal which 
depends on the $\mathcal{R}$--characteristic map $\lambda$. 
The final result relates the ring 
$w\mathcal{SR}[Q_{(J)},\lambda_{(J)}]$ to the ring $w\mathcal{SR}[Q,\lambda]$. 

\subsection*{Acknowledgments}
This work was supported in part by grants 210386 and 426160 from Simons Foundation. 
The third author has been supported by 
the POSCO Science Fellowship of POSCO TJ Park Foundation.

\section{Toric orbifolds}
\label{sec_toric_orb_orb_lens}
In this section, we review the basic theory of toric orbifolds \cite{DJ, PS} constructed from 
a combinatorial information called an $\cR$-characteristic pair \cite{BSS}.  
Given an $n$-dimensional simple convex polytope $Q$, let 
$V(Q)=\{v_1, \dots, v_\ell\}$ be the set of vertices and 
$\ms{F}(Q)=\{F_1, \dots, F_m\}$ the set consisting of codimension 
1 faces called \emph{facets} of $Q$. 

\begin{definition}
A pair $(Q, \lambda)$ consisting of an $n$-dimensional simple polytope $Q$ and 
a function $\lambda \colon \ms{F}(Q)\to \ZZ^n$,
is called an $\mathcal{R}$-\emph{characteristic pair} if the following condition is satisfied:
\begin{align}\label{def_R-char_fun}
\{\lambda(F_{i_1}), \dots, \lambda(F_{i_k})\}\text{ is a linearly independent set, whenever }
\bigcap_{j=1}^k F_{i_j}\neq \emptyset.
\end{align}
In this case, we call $\lambda$ an $\mathcal{R}$-\emph{characteristic function} on $Q$. 
\end{definition}

An $\mathcal{R}$-characteristic function is often represented by an 
$n\times m$ matrix whose $i$-th column vector is the transpose of $\lambda(F_i)$
for $i=1, \dots,  m$. We call
 this matrix the \emph{characteristic matrix} associated to $\lambda$.
% We shall use the characteristic matrix in Section \ref{sec_J-const}.

Given an $(n-k)$-dimensional face $E= F_{i_1}\cap \dots \cap F_{i_k}$ of $Q$
for $k\geq 1$, 
let $M(E)$ be the $\ZZ$-submodule of $\ZZ^n$
generated by the set $\{ \lambda(F_{i_1}), \dots, \lambda(F_{i_k})\}$. 
Then, $M(E)$ induces a free $\ZZ$-submodule 
$\left(M(E) \otimes_\ZZ \RR \right) \cap \ZZ^n$ of rank $k$ in $\ZZ^n$. 
Hence, we can define a natural projection 
$$\rho_E \colon \ZZ^n \to \ZZ^{n-k} \cong \ZZ^n/ 
( \left(M(E) \otimes_\ZZ \RR \right) \cap \ZZ^n).$$
Next, set
$$ \mathcal{F}(E):=
\{E \cap F_j \mid F_j\notin \{F_{i_1}, \dots, F_{i_k}\} \text{ and } 
F_j\cap E\neq \emptyset\}.$$
Now, we define a function 
\begin{equation}\label{eq_lambda_E}
\lambda_E \colon \mathcal{F}(E) \to \ZZ^{n-k}
\quad \text{by}\quad  \lambda_E(E\cap F_j) = { prim}((\rho_E\circ \lambda)(F_j)),
\end{equation}
where $ {prim}((\rho_E\circ \lambda)(F_j))$ denotes the primitive vector of 
$(\rho_E\circ \lambda)(F_j).$
One can check that the pair $(E, \lambda_E)$ is an $\mathcal{R}$-characteristic
pair. We also associate $M(E)$ with a $k$-dimensional subtorus $T_E$ of 
standard $n$-dimensional real torus $T^n$
generated by the images of $\lambda(F_{i_1}), \dots, \lambda(F_{i_k})$
under the map $M(E) \hookrightarrow \ZZ^n \xrightarrow{\exp} T^n$.

An $\cR$-characteristic pair $(Q, \lambda)$ 
determines a $2n$-dimensional orbifold
with an action of $n$-dimensional torus $T^n$. 
%This is done by analogy with the fundamental Davis--Januszkiewicz 
%construction of toric manifolds \cite{DJ}.
To be more precise, 
for each point $x\in E$, let  $E(x)$ be the face of $Q$ containing $x$ 
in its interior. Now we consider the following quotient space  
$$X(Q, \lambda):=(T^n \times Q)/\sim,$$
where
\begin{equation}\label{eq_equivalence_rel}
 (t,x)\sim (s,y) ~~ \mbox{if and only if} ~~ x=y ~~\mbox{and} ~~ t^{-1}s\in T_{E(x)}. 
\end{equation}
The space $X(Q, \lambda)$ is equipped with an action of $T^n$ 
given by the multiplication on the first factor. The orbit map 
is induced by the projection onto the second factor,
\begin{equation}\label{eq_orbit_map}
\pi\colon X(Q, \lambda) \to Q
\end{equation}
defined by  $[t, x]_{\sim} \mapsto x.$
A detailed verification that $X(Q,\lambda)$ has an orbifold structure, 
including an explicit description of the orbifold charts, is 
contained in \cite[Section 2]{PS}. The authors also give an  
axiomatic description and show that it agrees with the 
construction above up to equivariant homeomorphism.
The space $X(Q, \lambda)$ is known as a 
\emph{toric orbifold}\footnote{Davis 
and Januszkiewicz \cite{DJ} first called them toric orbifolds.
In recent literature, they are sometimes called \emph{quasitoric orbifolds}.}.

\begin{remark}
If the collection $\{\lambda(F_{i_1}), \dots, \lambda(F_{i_k})\}$ in 
\eqref{def_R-char_fun} is unimodular whenever $F_{i_1}\cap \dots\cap  F_{i_k}\neq \emptyset$,
then $\lambda$ is called a \emph{characteristic function} and the 
resulting space $X(Q, \lambda)$ is a smooth \emph{toric manifold},
see \cite[Section 1]{DJ}.
\end{remark}

\begin{remark}\label{rmk_preimage_of_face}
An $\cR$-characteristic pair $(Q, \lambda)$
induces another $\cR$-characteristic pair $(E, \lambda_E)$ for each 
face $E$ of $Q$, which defines another toric orbifold 
$X(E, \lambda_E)$. On the other hand, $\pi^{-1}(E)$ is an invariant
suborbifold of $X(Q, \lambda)$ with respect to the action of $T^n$
on $X(Q, \lambda)$. Indeed, $\pi^{-1}(E)$ is the suborbifold fixed by the subtorus $T_E$ of 
$T^n$. The residual torus $T^n/T_E \cong T^{\dim E}$ acts on the 
suborbifold $X(E, \lambda_E)$ and $\pi^{-1}(E)$.
It is shown in \cite[Section 2.3]{PS} that 
$X(E, \lambda_E)$ is equivariantly homeomorphic to $\pi^{-1}(E)$.
In particular, when $E$ is a $1$-dimensional face of $Q$, then 
$\pi^{-1}(E)$ is homeomorphic to $S^2$. 
\end{remark}

%
%\begin{remark}\label{rmk_preimage_of_edge}
%In general, the preimage $\pi^{-1}(E)$
%of an $1$-dimensional face $E$ of $Q$ for an arbitrary toric orbifold $X(Q,\lambda)$ is 
%always homeomorphic to $S^2$. We refer to Remark \ref{rmk_trivial_q-cell}  and 
%\cite[Section 3]{BSS}. 
%\end{remark}
%

%Next, we introduce the following notion of the \emph{local group} at each vertex $v$ of $E$. 
%
%\begin{definition}\label{def_local_group}
%Let $E$ be an $(n-k)$-dimensional face of $Q$ and $v$ a vertex of $E$. We define 
%the \emph{local group} of $(E, v)$ as 
%$$G_E(v):=\ker (\exp \Lambda_{E,v} \colon T^{n-k} \twoheadrightarrow T^{n-k}). $$
%In particular, when $E=Q$, we define 
%$G_Q(v):=\ker (\exp \Lambda_{v} \colon T^{n} \twoheadrightarrow T^{n})$
%\end{definition}

For simplicity, we summarize notation that we introduced above as follows. 
\begin{enumerate}
\item $(Q, \lambda)$ is an $\mathcal{R}$-characteristic pair 
with $\dim Q=n$ and 
$X(Q, \lambda)$ is the associated toric orbifold of dimension $2n$.  
\item $E=F_{i_1} \cap \dots \cap F_{i_k}$ is an $(n-k)$-dimensional 
face of $Q$, for $k \geq 1$.  
\item $\lambda_E \colon \mathcal{F}(E) \to \ZZ^{n-k}$ is an 
$\mathcal{R}$-characteristic function induced from $(Q, \lambda)$. 
\item $X(E, \lambda_E)$ is the toric orbifold associated to the $\cR$-characteristic pair 
$(E, \lambda_E)$. 
\item\label{item_(5)} For a vertex $v$ in $E$, we denote by $\Lambda_{E, v} \colon \ZZ^{n-k} \to \ZZ^{n-k}$ 
the linear map given by the square matrix 
$$\Lambda_{E,v}=\left[ \begin{array}{c|c|c} 
\lambda_E(E\cap F_{s_1})^t& \dots& \lambda_E(E\cap F_{s_{n-k}})^t
\end{array} \right],$$
where $v=\bigcap_{a=1}^{n-k} (E\cap F_{s_a})$.
In particular, when $E=Q$ and $v=\bigcap_{a=1}^n F_{s_a}$,  
$$\Lambda_{v}:=\Lambda_{Q, v}=\left[ \begin{array}{c|c|c} \lambda(F_{s_1})^t& \dots& \lambda( F_{s_{n}})^t \end{array} \right].$$
\item $G_E(v) := \ker (\exp \Lambda_{E,v} \colon T^{n-k} \twoheadrightarrow T^{n-k}).$ 
\label{page_local_group}
\end{enumerate}

Notice that the notation $G_E(v)$ is used for $\coker \Lambda_{E,v}$ in 
\cite[Section 4]{BSS}. However, the following commutative diagram and 
snake lemma, see for instance \cite{Ati}, shows that those two finite groups are
isomorphic. 
\begin{equation}\label{eq_kerexp=coker}
\begin{tikzcd}[row sep=small]
&&0\dar &\ker (\exp \Lambda_{E, v}) \dar & \\
0 \rar &\ZZ^{n-k} \arrow{dd}{\Lambda_{E, v}} \rar& 
\ZZ^{n-k} \otimes_\ZZ \RR \rar \arrow{dd}{\cong} & 
T^{n-k}\rar \arrow{dd}{\exp \Lambda_{E, v}} &0 \\
&&&&\\
0 \rar &\ZZ^{n-k} \rar\dar & \ZZ^{n-k} \otimes_\ZZ \RR \rar \dar & T^{n-k}\rar &0 \\
& \coker \Lambda_{E, v} &0  & & 
\end{tikzcd}
\end{equation}

\begin{remark}\label{rmk_order_of_local_group}
One can see from \eqref{eq_kerexp=coker} that the order $|G_E(v)|$ 
of the finite group $G_E(v)$ is exactly $|\det \Lambda_{E, v}|$. 
\end{remark}

\begin{example}\label{ex_induced_char_ftn}
Let $Q$ be a 3-dimensional prism and $F_1, \dots, F_5$ its facets illustrated in 
Figure \ref{fig_prism_and_char_ftn}. 
\begin{figure}
\begin{tikzpicture}[scale=0.7]
\draw[dashed] (0,0)--(1,-1)--(3,1)--cycle; %below trianlge
\draw (0,2)--(1,1)--(3,3)--cycle; %upper triangle
\draw (0,0)--(1,-1)--(1,1)--(0,2)--cycle; %left rectangle
\draw (1,1)--(1,-1)--(3,1)--(3,3)--cycle; %right rectangle
\draw[dashed] (0,0)--(3,1)--(3,3)--(0,2)--cycle; %behind rectangle

\node at (1/2, 1/2) {$F_1$};
\node at (2.3, 1.2) {$F_2$};
\node at (4/3, 2) {$F_4$};
\node[right] at (4,2) {$F_3$};
\node[right] at (4,0) {$F_5$};

\draw[dotted, thick, ->] (4,2.3) to [out=120, in=90] (2.5,2);
\draw[dotted, thick, ->] (4,0) to [out=240, in=330] (4/3,0);

\draw plot [mark=*, mark size=2] coordinates{(1,1)};
\draw plot [mark=*, mark size=2] coordinates{(0,2)};
\node[left] at (0,2) {$v_1$};
\node[right] at (1,1) {$v_2$};
\end{tikzpicture}
\caption{}
\label{fig_prism_and_char_ftn}
\end{figure}
We assign $\mathcal{R}$-characteristic vectors by 
$\lambda(F_1)=(1,0,0),~\lambda(F_2)=(0,1,0),~\lambda(F_3)=(0,0,1),~
\lambda(F_4)=(1,2,4)$ and $\lambda(F_5)=(-1,-1,-1)$. 
Observe that 
\begin{align*}
G_Q(v_1) &=\{(t_1, t_2, t_3) \in T^3 \mid t_1t_3=t_3^2=t_2t_3^4=1\}\\
&= \langle (1,1,1), (-1, 1, -1) \rangle \cong \ZZ/2\ZZ, 
\end{align*}
Similarly we have 
\begin{align*}
G_Q(v_2) &=\{ (t_1, t_2, t_3) \in T^3 \mid t_1t_3=t_2t_3^2=t_3^4=1\}  \\
&= \langle (1,1,1), (-i, -1, i), (-1, 1, -1), (i, -1, -i)\rangle \cong \ZZ/4\ZZ,\\
G_Q(v) & = \langle (1,1,1) \rangle \cong 1,~ \text{ for } v\in V(Q) \setminus \{v_1, v_2\}.
\end{align*}
Choosing the face $F_4$, we consider $\lambda(F_4)=(1,2,4),~(0,1,0)$ and $(0,0,1)$ as a basis 
of $\ZZ^3$. Then, the induced $\cR$-characteristic function 
$$\lambda_{F_4} \colon \{F_4 \cap F_1, F_4\cap F_2, F_4\cap F_3\} \to \ZZ^2$$
on $F_4$ is given by 
\begin{align*}
\lambda_{F_4} (F_4 \cap F_1) = (-1,-2), ~~ 
\lambda_{F_4} (F_4 \cap F_2) = (1,0) ~~ \mbox{and} ~~ 
\lambda_{F_4} (F_4 \cap F_3) = (0,1).
\end{align*}
To be more precise, since $\lambda(F_1)=(1,0,0)=(1,2,4)-2(0,1,0)-4(0,0,1),$
we have $(\rho_{F_4}\circ \lambda)(F_1)=(-2,-4)$ whose primitive vector gives 
$\lambda_{F_4}(F_4 \cap F_1)$. A similar computation gives other two induced
$\cR$-characteristic vectors. Moreover, we have 
\begin{align*}
G_{F_4}(v_1)&=\{ (t_1, t_2)\in T^2 \mid t_1^{-1}=t_1^{-1}t_2=1\}
=\left< (1,1)\right>\cong 1,\\
G_{F_4}(v_2)&=\{ (t_1, t_2)\in T^2 \mid t_1^{-1}t_2=t_2^{-2}=1\}
=\left< (1,1), (-1, -1)\right>\cong \ZZ/2\ZZ.
\end{align*}
Finally, $G_{F_4}(v)$=1, where $v=F_2\cap F_3\cap F_4$ is considered as a vertex of $F_4$. 
\end{example}

We finish this section by the following proposition which we shall use in 
Section \ref{sec_homology_of_X_{(J)}}. One can see this property in 
Example \ref{ex_induced_char_ftn}. 

\begin{proposition}\cite[Proposition 4.3]{BSS}\label{prop_sub_local_group}
Let $E$ and $E'$ be two faces of $Q$ containing a vertex $v$ such that 
$E$ is a face of $E'$. Then, $|G_E(v)|$ divides $|G_{E'}(v)|$. 
\end{proposition}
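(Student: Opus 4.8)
The plan is to reduce the divisibility statement about the orders $|G_E(v)|$ and $|G_{E'}(v)|$ to a divisibility statement about the determinants $|\det \Lambda_{E,v}|$ and $|\det \Lambda_{E',v}|$, using Remark \ref{rmk_order_of_local_group}, and then to exhibit a block-triangular structure relating the two matrices. First I would set up notation: write $E' = F_{i_1}\cap\dots\cap F_{i_k}$ and $E = E'\cap F_{i_{k+1}}\cap\dots\cap F_{i_r}$, so that $E$ is a codimension-$(r-k)$ face of $E'$; since $v$ lies in $E$, it lies in $E'$ as well, and $v$ is cut out inside $E'$ by the facets $E'\cap F_{i_{k+1}},\dots,E'\cap F_{i_r}$ together with some further facets $E'\cap F_{j_1},\dots,E'\cap F_{j_s}$ of $E'$ (where $(r-k)+s = \dim E' = n-k$). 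The key observation is that the projection $\rho_{E'}\colon \ZZ^n \to \ZZ^{n-k}$ used to define $\lambda_{E'}$ can be factored compatibly with the further projection onto $\ZZ^{n-r}$ defining $\lambda_E$: namely $\rho_E$ factors through $\rho_{E'}$ up to the primitive-vector normalization, because $M(E')\subseteq M(E)$ as submodules of $\ZZ^n$.

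Next I would assemble the matrix $\Lambda_{E',v}$ whose columns are $\lambda_{E'}(E'\cap F_{i_{k+1}})^t,\dots,\lambda_{E'}(E'\cap F_{i_r})^t,\lambda_{E'}(E'\cap F_{j_1})^t,\dots,\lambda_{E'}(E'\cap F_{j_s})^t$, and compare it with $\Lambda_{E,v}$, whose columns are $\lambda_E(E\cap F_{j_1})^t,\dots,\lambda_E(E\cap F_{j_s})^t$. The claim is that, after the change of coordinates on $\ZZ^{n-k}$ that sends the ordered basis $\{\lambda_{E'}(E'\cap F_{i_{k+1}}),\dots,\lambda_{E'}(E'\cap F_{i_r})\}$ extended suitably, to the standard basis, the matrix $\Lambda_{E',v}$ becomes block upper-triangular of the form
\begin{equation}\label{eq_block}
\begin{pmatrix} \ast & \ast \\ 0 & B \end{pmatrix},
\end{equation}
where the top-left block is $(r-k)\times(r-k)$ and the bottom-right block $B$ is an $s\times s$ integer matrix that, after the same primitive-vector normalization that defines $\lambda_E$, has columns differing from those of $\Lambda_{E,v}$ only by positive integer scalars (the ratios of the non-primitive vectors $(\rho_E\circ\lambda)(F_{j_a})$ to their primitive parts). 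Consequently $|\det \Lambda_{E',v}| = |\det(\text{top-left block})|\cdot |\det B|$ and $|\det B|$ is a positive integer multiple of $|\det \Lambda_{E,v}|$, so $|\det \Lambda_{E,v}|$ divides $|\det \Lambda_{E',v}|$, which is what we want after invoking Remark \ref{rmk_order_of_local_group} at both ends.

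The main obstacle I anticipate is handling the primitive-vector normalizations cleanly: the definition \eqref{eq_lambda_E} replaces $(\rho_{E'}\circ\lambda)(F_j)$ by its primitive vector, and iterating this for $E\subseteq E'$ does not literally commute with the single-step projection $\rho_E$, so one must argue that the two differ only by a positive integer scaling per column and that such scalings can only \emph{enlarge} the absolute value of the determinant (equivalently, can only enlarge the cokernel), never shrink it. I would handle this by working first with the un-normalized projections — where the factorization $\rho_E = (\text{projection})\circ\rho_{E'}$ holds on the nose because $M(E')\subseteq M(E)\otimes_\ZZ\RR \cap \ZZ^n$ — establishing the block-triangular determinant identity there, and only then reintroducing the primitive normalization as a final step, noting that dividing a column by a positive integer divides the determinant by that integer, hence each normalization step passes from a multiple of $|\det \Lambda_{E,v}|$ to a (possibly smaller, but still integer) multiple. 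An alternative, perhaps cleaner, route is to avoid matrices entirely and argue directly with the groups: $G_E(v)$ and $G_{E'}(v)$ are the local isotropy groups (orbifold structure groups) of the toric orbifolds $X(E,\lambda_E)$ and $X(E',\lambda_{E'})$ at the point corresponding to $v$, and by Remark \ref{rmk_preimage_of_face} the suborbifold $X(E,\lambda_E)$ embeds equivariantly as $\pi^{-1}(E)\subseteq X(E',\lambda_{E'})$; a point on a suborbifold has structure group a subquotient of the ambient structure group, giving the divisibility of orders directly. I would present the matrix argument as the main proof and remark on the orbifold interpretation, since the matrix computation is the one that transfers verbatim to the weighted Stanley–Reisner bookkeeping needed in Section \ref{sec_homology_of_X_{(J)}}.
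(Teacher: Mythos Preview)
The paper does not prove this proposition; it merely cites \cite[Proposition 4.3]{BSS} and moves on. So there is no in-paper argument to compare against, and your proposal must be assessed on its own merits.

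Your determinant approach via Remark~\ref{rmk_order_of_local_group} is sound, and you have correctly isolated the primitive-vector normalization as the one genuine subtlety. One small gap to close: when you write ``after the change of coordinates on $\ZZ^{n-k}$ that sends the ordered basis $\{\lambda_{E'}(E'\cap F_{i_{k+1}}),\dots,\lambda_{E'}(E'\cap F_{i_r})\}$ extended suitably, to the standard basis'', you are implicitly assuming these $r-k$ vectors span a \emph{saturated} sublattice of $\ZZ^{n-k}$, since otherwise they cannot be completed to a $\ZZ$-basis and your block form \eqref{eq_block} is not achieved by a unimodular change of coordinates. In general they need not be saturated. Two clean fixes: (i) reduce by induction to the codimension-one case $r=k+1$, where a single primitive vector always extends to a $\ZZ$-basis, so the top-left block is literally $[1]$ and $|\det\Lambda_{E',v}|=|\det B|$; or (ii) replace the span by its saturation, obtaining $|\det\Lambda_{E',v}| = [\text{saturation}:\text{span}]\cdot|\det B|$ with the index a positive integer, which is harmless for divisibility. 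Either way the rest of your argument goes through: the columns of $B$ are the images of $\lambda_{E'}(E'\cap F_{j_a})$ in $\ZZ^{n-k}/(\text{saturated span})\cong\ZZ^{n-r}$, each of which is a positive integer multiple of its primitive part $\lambda_E(E\cap F_{j_a})$, so $|\det B|$ is a positive integer multiple of $|\det\Lambda_{E,v}|$.

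Your alternative orbifold-theoretic remark (local group of a suborbifold is a subquotient of the ambient local group) is correct in this toric setting, but making it precise would require unpacking the explicit orbifold charts of \cite[Section~2]{PS}; the matrix argument is more self-contained and is the one I would keep as the main proof.
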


\section{Building sequences and homology of toric orbifolds}
\label{sec_bdseq_and_q-cell}
The goal of this section is to discuss integral homology of toric orbifolds.
After Poddar--Sarkar \cite{PS} developed the notion of $\mathbf{q}$-CW complex, 
the authors of \cite{BNSS} introduced a \emph{building sequence} 
which enables us to detect $p$-torsion freeness of an orbifold 
having $\mathbf{q}$-CW complex structure for each prime number $p$.
%In \cite{BNSS}, the authors introduced a \emph{building sequence} of a
% $\mathbf{q}$-CW complex, and gave a sufficient condition for a 
% $\mathbf{q}$-CW complex not to have $p$-torsion for a given prime number $p$. 
In case of toric orbifolds, one can derive a building sequence via a \emph{retraction 
sequence} of a simple polytope introduced in \cite{BSS}. In this subsection, we
review the definition of a \emph{building sequence} and a 
\emph{retraction sequence} and study their relation. 

\subsection{Building sequence}

Let $\bar{D}^n$ be a closed $n$-dimensional disc and $G$ a finite group 
acting linearly on $\bar{D}^n$. We call the quotient $\bar{D}^n/G$ a 
\emph{$\mathbf{q}$-disc}. Now, a $\mbf{q}$-CW complex is defined inductively
in the similar manner as a usual CW complex, but we use $\mbf{q}$-discs instead 
of $\bar{D}^n$. 
To be more precise, we start with a discrete set $X_0$ of 
$0$-dimensional $\mbf{q}$-cells. Next, assuming $X_{i-1}$ is defined, 
we define 
$$X_i := X_{i-1} \cup_{\{f_\alpha\}} \{\bar{D}^{i}/G_\alpha\},$$
where $f_\alpha \colon \partial (\bar{D}^{i}/G_\alpha) \to X_{i-1}$ is the
\emph{attaching map} for a $\mbf{q}$-cell $\bar{D}^{i}/G_\alpha$ for 
finitely many $\alpha$.  

A \emph{building sequence} for a $\mbf{q}$-CW complex $X$ is a sequence 
$\{Y_j\}_{j=1}^\ell$ of $\mbf{q}$-CW subcomplexes of $X$ such that 
\begin{equation}\label{eq_attaching_cell_in_building_seq}
Y_j\setminus Y_{j-1} \cong D^{k_j} / G_j
\end{equation}
for some $k_j$-dimensional open $\mbf{q}$-disc $D^{k_j}/G_j$. We denote by 
$0_j$ the image of  
origin of \eqref{eq_attaching_cell_in_building_seq} and call it the \emph{free special 
point} in $Y_j$. 
When we need to emphasize the free special point, we denote the 
building sequence by $\{(Y_j, 0_j)\}_{j=1}^\ell$, see \cite[Section 2]{BNSS} for 
more details. 

\begin{example}
Let $Y_1$ be a point and $Y_2= Y_1 \cup _{f_1} \bar{D}^1$  a circle obtained 
by the canonical  attaching map $f_1 \colon S^0 \to \{pt\}$. Consider two
$\mbf{q}$-cells $\bar{D}^2/\ZZ_p$ and  $\bar{D}^2/\ZZ_q$, where we regard
$\ZZ_p$ as a finite group generated by $p$-th root of unity and it acts on
$\bar{D}^2\subset \CC$ by the multiplication. Similarly we consider 
$\ZZ_q$-action on $\bar{D}^2$. Now we define 
$$Y_3: =Y_2 \cup_{f_2}  \bar{D}^2/\ZZ_p \text{ and } 
Y_4: =Y_3 \cup_{f_3}  \bar{D}^2/\ZZ_p,$$ 
where 
$f_2 \colon  S^1/\ZZ_p \to Y_1\cong S^1$, respectively 
$f_3 \colon S^1/\ZZ_q \to Y_1 \subset Y_2 $,
defined by $f_2([e^{2\pi i x}]) = e^{2\pi i px}$, respectively $f_3([e^{2\pi i y}])=e^{2\pi i qy}$, for 
$0 \leq x,y \leq 1$. Then, the resulting space $Y_4$ is a $2$-dimensional orbifold sphere 
of football type $\CP^1_{p,q}$, see Figure \ref{fig_football}. 

\begin{figure}[h!]
\begin{tikzpicture}[scale=0.8]
\draw[fill] (-1.5,0) circle (0.05); 
\draw[fill] (0,0) circle (0.05); 
\node at (-1.5,-1.5) {\footnotesize$Y_1$};
\node at (1,-1.5) {\footnotesize$Y_2=Y_1 \cup_{f_1} \bar{D}^1$};
\begin{scope}[yscale=0.7]
\draw (0,0)  [out=90, in=90] to (2,0) [out=270, in=270] to (0,0);
\end{scope}

\begin{scope}[xshift=110, yscale=0.7]
\draw[fill] (0,0) circle (0.05); 
\draw[dashed] (0,0)  [out=90, in=90] to (2,0) [out=270, in=270] to (0,0);
\draw[fill=yellow, opacity=0.5] 
(1,2) [out=315, in=90] to (2,0) [out=270, in=270] to (0,0) [out=90, in=225] to (1,2);
\draw(1,2) [out=315, in=90] to (2,0) [out=270, in=270] to (0,0) [out=90, in=225] to (1,2);
\node at (1,-2.5) {\footnotesize$Y_3=Y_2 \cup_{f_2} \bar{D}^2/\ZZ_p$};
\end{scope}

\begin{scope}[xshift=220, yscale=0.7]
\draw[fill] (0,0) circle (0.05); 
\draw[dashed] (0,0)  [out=90, in=90] to (2,0) [out=270, in=270] to (0,0);
\draw[fill=blue, opacity=0.4, dashed] 
(1,-2) [out=60, in=270] to (2,0) [out=90, in=90] to (0,0) [out=270, in=120] to (1,-2);
\draw
(0,0) [out=270, in=120] to (1,-2) [out=60, in=270] to (2,0) ;
\draw[dashed] (0,0) [out=90, in=90] to (2,0);
\draw[fill=yellow, opacity=0.5] 
(1,2) [out=315, in=90] to (2,0) [out=270, in=270] to (0,0) [out=90, in=225] to (1,2);
\draw (1,2) [out=315, in=90] to (2,0) [out=270, in=270] to (0,0) [out=90, in=225] to (1,2);
\node at (1,-2.5) {\footnotesize$Y_4=Y_3 \cup_{f_3} \bar{D}^2/\ZZ_q$};
\end{scope}

\end{tikzpicture}
\caption{A building sequence of $\CP^1_{p,q}$.}
\label{fig_football}
\end{figure}
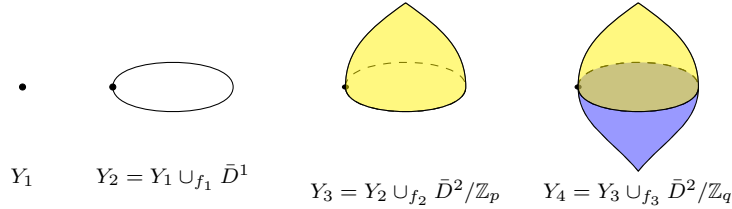
\end{example}

\begin{remark}\label{rmk_trivial_q-cell}
A $\mbf{q}$-disc $\bar{D}^k/G$ is homeomorphic to 
the usual disc $\bar{D}^k$ if $k=0,1$ or $2$. Indeed, 
the football-type orbifold sphere $\CP^1_{p,q}$ is 
homeomorphic to $S^2$ for arbitrary positive integers $p$ and $q$. 
\end{remark}

\subsection{Retraction sequence}
Given an $n$-dimensional simple polytope $Q$, as a polytopal 
complex (see \cite[Definition 5.1]{Zie}), we define a sequence of 
polytopal subcomplexes of $Q$ which will determine a building 
sequence of a toric orbifold whose orbit space is $Q$. We
refer to \cite[Section 2]{BSS} for more details
about retraction sequences of a simple polytope. 

Let $B$ 
be a connected polytopal subcomplex of $Q$. A vertex $v$ of $B$ 
is called a \emph{free vertex} if $v$ has a neighborhood homeomorphic to 
$\RR^N_{\geq 0}$ for some $N\in \{1, \dots, \dim B\}$ as manifolds with corners. In this case, there 
exists a unique maximal face $E$ of $B$ containing $v$ as a vertex. We write $FV(B)$ 
as the set of all free vertices of $B$. 
For instance, every vertex $v$  of $Q$ is a free vertex and $Q$ itself is the 
unique (non-proper) face of $Q$ containing $v$. In particular, $FV(Q)=V(Q)$. 
However, for a polytopal subcomplex $B$ of $Q$, $FV(B)$ is a subset of $V(B)$
in general. 

A retraction sequence $\{(B_j, E_j, b_j)\}_{j=1}^\ell$ for $Q$ is a sequence of 
triples consists of a polytopal subcomplex $B_j$ of $Q$, a free vertex $b_j$ 
of $B_j$ and the unique face $E_j$ of $B_j$ containing a free vertex $b_j$,
which is defined inductively such that 
$$B_{j+1}=\bigcup \{E\mid E \text{ is a face of }B_j \text{ with } b_j\notin E\}.$$
The next term $(B_{j+1}, E_{j+1}, b_{j+1})$ is given by the choice of a free 
vertex $b_{j+1}$ of $B_{j+1}$ and the unique face $E_{j+1}$ 
determined by the edges of $B_{j+1}$ containing $b_{j+1}$. 
Finally, the sequence ends up with $(B_\ell, E_\ell, b_\ell)=(b_\ell, b_\ell, b_\ell)$
for some vertex $b_\ell$ of $Q$. 

We note that every simple polytope has at least one retraction sequence. 
It can be shown by realizing a simple polytope $Q$ as a convex subset in 
$\RR^n$ and taking a linear function $f\colon \RR^n \to \RR$ 
such that $f(v_i)\neq f(v_j)$ whenever two vertices $v_i$ and $v_j$ of $Q$  
are distinct. We refer to \cite[Proposition 2.3]{BSS} for the completeness of arguments.
%more details. 

\begin{example}\label{ex_n-gon}
Let $Q$ be an $\ell$-gon and $v_1, \dots, v_\ell$ its vertices with counterclockwise 
order. Let $F_i$ be the $1$-dimensional face connecting $v_i$ and $v_{i+1}$ for 
$i=1, \dots, \ell-1$ and $F_{\ell}$ the $1$-dimensional face with vertices $v_{\ell}$ and 
$v_1$. Then, one canonical choice of a retraction sequence 
$\{(B_j, E_j, b_j)\}_{j=1}^{\ell}$
is given by 
$$(B_j, E_j, b_j)=\begin{cases} (Q, Q, v_1) & \text{if } j=1;\\
( F_j\cup \dots \cup F_{\ell-1}, F_j, v_j)& \text{if } j=2, \dots, \ell-1;\\
(v_\ell, v_\ell, v_\ell) & \text{if } j=\ell,
\end{cases}$$
as in  Figure \ref{fig_ret_of_polygon} for $5$-gon. 
\end{example}
\begin{figure}
\begin{tikzpicture}[scale=0.8]
\draw[fill=blue!30] (18:1cm) \foreach \x in {90, 90+72, 90+144, 90+216, 18}	
{ -- (\x: 1cm)} ;
\foreach \x/\l/\p in 
{18/{\blue{\small$v_1$}}/right,
90/{\small$v_2$}/above,
90+72/{\small$v_3$}/left,
90+144/{\small$v_4$}/left,
90+216/{\small$v_5$}/right}
\node (\x:1cm) [circle, draw, fill, inner sep = 1pt, label = {\p:\l}] at (\x:1cm) {};

\begin{scope}[xshift=100]
\draw[very thick, blue]  (90:1cm)--(90+72:1cm);
\draw (90:1cm) \foreach \x in {90+72, 90+144, 90+216} { -- (\x: 1cm)} ;
\foreach \x/\l/\p in 
{90/{\blue{\small$v_2$}}/above,
90+72/{\small$v_3$}/left,
90+144/{\small$v_4$}/left,
90+216/{\small$v_5$}/right}
\node (\x:1cm) [circle, draw, fill, inner sep = 1pt, label = {\p:\l}] at (\x:1cm) {};
\end{scope}

\begin{scope}[xshift=200]
\draw[very thick, blue]  (90+144:1cm)--(90+72:1cm);
\draw (90+72:1cm) \foreach \x in {90+144, 90+216} { -- (\x: 1cm)} ;
\foreach \x/\l/\p in 
{
90+72/{\blue{\small$v_3$}}/left,
90+144/{\small$v_4$}/left,
90+216/{\small$v_5$}/right}
\node (\x:1cm) [circle, draw, fill, inner sep = 1pt, label = {\p:\l}] at (\x:1cm) {};
\end{scope}

\begin{scope}[xshift=300]
\draw[very thick, blue]  (90+144:1cm)--(90+216:1cm);
\draw (90+144:1cm) \foreach \x in {90+144, 90+216} { -- (\x: 1cm)} ;
\foreach \x/\l/\p in 
{
90+144/{\blue {\small$v_4$}}/left,
90+216/{\small$v_5$}/right}
\node (\x:1cm) [circle, draw, fill, inner sep = 1pt, label = {\p:\l}] at (\x:1cm) {};
\end{scope}

\begin{scope}[xshift=350]
\foreach \x/\l/\p in 
{
90+216/{\small$v_5$}/right}
\node (\x:1cm) [circle, draw, fill, inner sep = 1pt, label = {\p:\l}] at (\x:1cm) {};
\end{scope}
\end{tikzpicture}
\caption{A retraction sequence for $5$-gon.}
\label{fig_ret_of_polygon}
\end{figure}
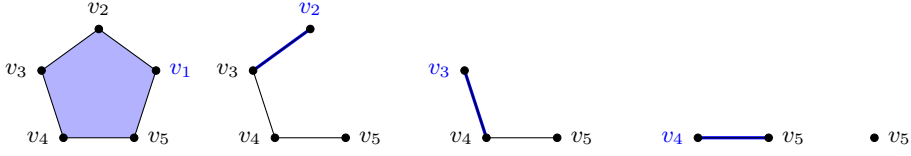

The following proposition shows a relation between a retraction sequence of a 
simple polytope $Q$ and a building sequence of the associated toric orbifold 
$X(Q, \lambda)$. 

\begin{proposition}\cite[Proposition 4.4]{BNSS}\label{prop_ret_build}
Let $(Q, \lambda)$ be an $\mathcal{R}$-characteristic pair and $\pi \colon 
X(Q, \lambda) \to Q$ be the orbit map.  
Then, a retraction sequence $\{(B_j, E_j, b_j)\}_{j=1}^\ell$ for $Q$ induces a building
sequence $\{(Y_{j}, 0_{j})\}_{j=1}^\ell$ as follows:
\begin{itemize}
\item $Y_1=0_i=\pi^{-1}(b_\ell)$, 
\item $Y_j=\bigcup_{s=\ell-j+1}^\ell \pi^{-1}(U_s)$, where $U_s$ is the open subset of 
$E_s$ obtained by deleting faces of $E_s$ which does not contain $b_s$, for $j=2, \dots, \ell$. 
%\item $Y_j\setminus Y_{j-1}\cong \bar{D}^{2\dim E_{\ell-j}}/G_{E_{\ell-j}}(b_{\ell-j})$ for $j=2, \dots, \ell-1,$
%\item $Y_\ell=\bigcup_{i=\ell-j+1}^\ell \pi^{-1}(U_i)$= X(Q, \lambda)$.
\end{itemize}
In particular, $Y_\ell= X(Q, \lambda)$ and $0_j=\pi^{-1}(b_j)$ for all $j=1, \dots, \ell$. 
\end{proposition}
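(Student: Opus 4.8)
\emph{Overview and the combinatorial step.} A retraction sequence records the decreasing filtration $Q=B_1\supset B_2\supset\cdots\supset B_\ell=\{b_\ell\}$, and the plan is to push this filtration through the orbit map $\pi$ and recognise the result as $X(Q,\lambda)$ built by attaching one open $\mathbf{q}$-disc at a time. First I would check that the ``half-open'' pieces $U_s$ partition $Q$. Since $b_s$ is a free vertex of $B_s$ with unique maximal face $E_s$, every face of $B_s$ through $b_s$ is a face of $E_s$; hence a point of $B_s$ lies in $B_{s+1}$ precisely when it lies in a proper face of $E_s$ missing $b_s$. Thus $B_s\setminus B_{s+1}=U_s$ for $s<\ell$ and $U_\ell=\{b_\ell\}$, so $\{U_s\}_{s=1}^\ell$ partitions $Q$, $\overline{U_s}=E_s$, and $E_s\setminus U_s\subset B_{s+1}$. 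Setting $Y_j:=\bigcup_{s=\ell-j+1}^{\ell}\pi^{-1}(U_s)=\pi^{-1}(B_{\ell-j+1})$ then gives an increasing filtration with $Y_1=\pi^{-1}(b_\ell)$, $Y_\ell=X(Q,\lambda)$, $Y_j\setminus Y_{j-1}=\pi^{-1}(U_{\ell-j+1})$, and $\pi^{-1}(E_{\ell-j+1}\setminus U_{\ell-j+1})\subset\pi^{-1}(B_{\ell-j+2})=Y_{j-1}$.

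\emph{The local model over $U_s$ (the main point).} Next I would identify $\pi^{-1}(U_s)$ with an open $\mathbf{q}$-disc of dimension $2\dim E_s$. Because $b_s$ is a free vertex of $B_s$ with maximal face $E_s$, straightening the polytopal corner of $E_s$ at $b_s$ (cf.\ \cite[Section 2]{BSS}) yields a homeomorphism of manifolds with corners $U_s\cong\RR^{d_s}_{\ge 0}$, where $d_s:=\dim E_s$, carrying $b_s$ to the origin and the $d_s$ facets of $E_s$ through $b_s$ to the coordinate hyperplanes. By Remark \ref{rmk_preimage_of_face}, $\pi^{-1}(E_s)=X(E_s,\lambda_{E_s})$; restricted to the chart $U_s$, the defining relation of $X(E_s,\lambda_{E_s})$ collapses, over the $i$-th coordinate hyperplane, the circle generated by $\lambda_{E_s}$ of the corresponding facet. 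By the explicit orbifold charts of \cite[Section 2]{PS} this exhibits $\pi^{-1}(U_s)$, $T^{d_s}$-equivariantly, as $\CC^{d_s}/G_{E_s}(b_s)$, the quotient of $\CC^{d_s}$ by the linear action of the finite group $G_{E_s}(b_s)$, of order $|\det\Lambda_{E_s,b_s}|$ (Remark \ref{rmk_order_of_local_group}). Composing with a $G_{E_s}(b_s)$-equivariant homeomorphism $\CC^{d_s}\cong D^{2d_s}$ and extending to the closed ball produces a characteristic map $\Phi_s\colon\bar D^{2d_s}/G_{E_s}(b_s)\to\pi^{-1}(E_s)\subset X(Q,\lambda)$ which restricts to a homeomorphism of $D^{2d_s}/G_{E_s}(b_s)$ onto $\pi^{-1}(U_s)$.

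\emph{Attaching the cells.} Then I would verify that, with $s=\ell-j+1$, the square $Y_j=Y_{j-1}\cup_{f_j}\bar D^{2d_s}/G_{E_s}(b_s)$ holds with attaching map $f_j:=\Phi_s|_{\partial}\colon S^{2d_s-1}/G_{E_s}(b_s)\to\pi^{-1}(E_s\setminus U_s)\subset Y_{j-1}$. Indeed $\Phi_s$ is a quotient map onto the closed set $\pi^{-1}(E_s)$, is injective with image $Y_j\setminus Y_{j-1}$ on the open disc, and carries the boundary sphere onto $\pi^{-1}(E_s\setminus U_s)\subset Y_{j-1}$ by the first step; this is exactly the pushout defining a $\mathbf{q}$-cell attachment. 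Hence $\{(Y_j,0_j)\}_{j=1}^\ell$ is a building sequence, where the free special point $0_j$ is the image under $\Phi_s$ of the origin, namely the $T^n$-fixed point $\pi^{-1}(b_{\ell-j+1})$ lying over the corresponding free vertex (equivalently $\pi^{-1}(b_j)$ once the building sequence is indexed in step with the retraction sequence), and $Y_\ell=\bigcup_s\pi^{-1}(U_s)=X(Q,\lambda)$.

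\emph{Expected obstacle.} The combinatorics of the first step and the index bookkeeping are routine; the substantive work is the second step. The difficulty is to produce the corner chart $U_s\cong\RR^{d_s}_{\ge 0}$ and then to match $\pi^{-1}(U_s)$ with the \emph{global} quotient model $\CC^{d_s}/G_{E_s}(b_s)$ — not merely with a neighbourhood of the fixed point — using the orbifold charts of \cite{PS}, all while keeping these identifications compatible as $s$ varies so that each attaching map $f_j$ is a genuinely well-defined continuous map into $Y_{j-1}$.
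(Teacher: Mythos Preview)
The paper does not supply a proof of this proposition: it is quoted verbatim from \cite[Proposition~4.4]{BNSS} and no argument is reproduced here. So there is no ``paper's own proof'' to compare against; any comparison would have to be with the original source \cite{BNSS}.

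That said, your outline follows precisely the strategy one expects (and, as far as one can tell from the surrounding exposition in \cite{BSS} and \cite{PS} that the paper invokes, the strategy of the cited proof): first read off from the definition of a retraction sequence that $B_s\setminus B_{s+1}=U_s$, then use the orbifold chart of \cite[Section~2]{PS} at the free vertex $b_s$ to identify $\pi^{-1}(U_s)$ equivariantly with $\CC^{d_s}/G_{E_s}(b_s)$, and finally check that the boundary of the closed $\mathbf q$-disc lands in $Y_{j-1}$. Your identification of the ``substantive step'' is accurate: the corner chart $U_s\cong\RR^{d_s}_{\ge 0}$ together with the explicit local model from \cite{PS} is exactly what drives the argument, and the rest is bookkeeping.

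One small point worth flagging: you correctly notice the indexing mismatch. With $Y_j\setminus Y_{j-1}=\pi^{-1}(U_{\ell-j+1})$, the free special point of $Y_j$ is $\pi^{-1}(b_{\ell-j+1})$, not $\pi^{-1}(b_j)$; the statement as printed (with $Y_1=\pi^{-1}(b_\ell)$ but also ``$0_j=\pi^{-1}(b_j)$'') contains a typographical inconsistency, and your parenthetical remark about reindexing is the right way to reconcile it. This is cosmetic and does not affect the argument.
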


\begin{remark}
Though a building sequence induced from a retraction 
sequence may require the image of an attaching
map for a q-cell $\overline{D}^k/G$ 
to be in a higher dimensional skeleton, a $G$-equivariant 
triangulation \cite[Theorem 3.6]{Il} of $\overline{D}^k/G$ together with 
an application of the cellular approximation theorem
allows for a deformation into the appropriate dimension.
\end{remark}

Next two theorems can be obtained by combining  
\cite[Theorem 1.1, 1.2]{BNSS} and Proposition \ref{prop_ret_build},  
which describe a sufficient condition for ($p$-)torsion freeness and vanishing odd degree 
(co)homology of a toric orbifold.

\begin{theorem}\label{thm_no_p-torsion}
Let $X:=X(Q, \lambda)$ be a toric orbifold and $p$ a prime number.
If there is a retraction $\{(B_j, E_j, b_j)\}_{j=1}^{\ell}$ such that
$\gcd\{p, |G_{E_j}(b_j)|\}=1$ for all $j$, then $H_{\ast}(X; \ZZ)$ 
has no $p$-torsion and $H_{odd}(X; \ZZ_p)$ is trivial. 
 \end{theorem}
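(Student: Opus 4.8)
The plan is to deduce Theorem \ref{thm_no_p-torsion} directly from Theorem \ref{thm_no-torsion} (that is, \cite[Theorem 4.6]{BNSS}) together with Proposition \ref{prop_ret_build}, since the prime-by-prime formulation of the earlier theorem is essentially tailor-made for this. First I would observe that a retraction sequence $\{(B_j,E_j,b_j)\}_{j=1}^\ell$ for $Q$ with $\gcd\{p,|G_{E_j}(b_j)|\}=1$ for all $j$ produces, via Proposition \ref{prop_ret_build}, a building sequence $\{(Y_j,0_j)\}_{j=1}^\ell$ for the $\mathbf{q}$-CW complex $X=X(Q,\lambda)$ in which the $j$-th $\mathbf{q}$-cell $Y_j\setminus Y_{j-1}\cong D^{k_j}/G_j$ has attaching group $G_j\cong G_{E_j}(b_j)$ (this identification of the local group of the cell with the isotropy group attached to the free vertex is exactly what is recorded in the discussion surrounding \eqref{page_local_group} and Remark \ref{rmk_order_of_local_group}). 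Hence the order of every cell group in this building sequence is prime to $p$.

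Next I would invoke the integral homology machinery of \cite{BNSS}: for a $\mathbf{q}$-CW complex admitting a building sequence all of whose cell groups have order prime to $p$, one runs the associated cofibration sequences $Y_{j-1}\hookrightarrow Y_j\to Y_j/Y_{j-1}$ and computes with the fact that $D^{k_j}/G_j$ is a $\mathbf{q}$-disc whose one-point compactification (the quotient of a disc by a linear finite group action) has reduced homology that, after inverting $p$, agrees with that of a sphere $S^{k_j}$ concentrated in a single even degree; crucially, the relevant local homology groups are built from $H_\ast(S^{k_j-1}/G_j)$, and the order $|G_j|$ being prime to $p$ kills any potential $p$-torsion contribution and any odd-degree class in $\ZZ_p$-coefficients. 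Induction on $j$ along the building sequence, using the long exact sequences of the pairs $(Y_j,Y_{j-1})$ and the vanishing of odd-degree $\ZZ_p$-homology and of $p$-torsion at each stage, then yields that $H_\ast(X;\ZZ)$ has no $p$-torsion and $H_{\mathrm{odd}}(X;\ZZ_p)=0$. Since all of this is precisely the content of \cite[Theorem 4.6]{BNSS} phrased for a single prime $p$, the argument is really just: translate the retraction-sequence hypothesis into a building-sequence hypothesis via Proposition \ref{prop_ret_build}, and quote the theorem.

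The one point that requires care — and the step I expect to be the main (if modest) obstacle — is the bookkeeping identifying the group $G_j$ attached to the cell $Y_j\setminus Y_{j-1}$ with $G_{E_{\ell-j+1}}(b_{\ell-j+1})$ in the correct order, and checking that the building sequence of Proposition \ref{prop_ret_build} genuinely meets the hypotheses under which \cite[Theorem 4.6]{BNSS} was proved (in particular the remark after Proposition \ref{prop_ret_build} about attaching maps landing in higher skeleta, which is handled by an equivariant triangulation and cellular approximation). Once that matching is in place the conclusion is immediate; there is no new computation beyond what \cite{BNSS} already supplies. I would therefore keep the proof short: state the reduction, cite Proposition \ref{prop_ret_build} for the building sequence, note the identification of cell groups, and apply \cite[Theorem 4.6]{BNSS}.
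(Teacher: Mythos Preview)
Your approach is essentially the paper's own: the paper states that both Theorem~\ref{thm_no_p-torsion} and Theorem~\ref{thm_no-torsion} ``can be obtained by combining \cite[Theorem 1.1, 1.2]{BNSS} and Proposition~\ref{prop_ret_build}'', which is exactly the reduction you describe (retraction sequence $\Rightarrow$ building sequence $\Rightarrow$ apply the \cite{BNSS} machinery).

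One small correction of bookkeeping, not mathematics: your opening sentence says you will deduce Theorem~\ref{thm_no_p-torsion} from Theorem~\ref{thm_no-torsion}, but that direction is impossible as stated, since Theorem~\ref{thm_no-torsion} has the \emph{stronger} hypothesis (a suitable retraction sequence for every prime). What you actually use, and correctly describe in the body of your proposal, is the single-prime building-sequence result \cite[Theorem~1.1]{BNSS}, not \cite[Theorem~4.6]{BNSS} (the latter is already the all-primes toric-orbifold statement, i.e.\ Theorem~\ref{thm_no-torsion} itself). So the citation to invoke after Proposition~\ref{prop_ret_build} is \cite[Theorem~1.1]{BNSS}; with that fix your write-up matches the paper exactly and can indeed be kept to one or two sentences.
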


\begin{theorem}\label{thm_no-torsion}
Let $X(Q, \lambda)$ be a toric orbifold. If for each prime $p$, 
there is a retraction $\{(B_j, E_j, b_j)\}_{j=1}^{\ell}$ such that
$\gcd\{p, |G_{E_j}(b_j)|\}=1$ for all $j$, then $H_{\ast}(X; \ZZ)$ 
has no torsion and $H_{odd}(X; \ZZ)$ is trivial. 
 \end{theorem}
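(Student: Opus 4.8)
The plan is to deduce the integral statement from its prime-by-prime counterpart, Theorem~\ref{thm_no_p-torsion}, together with the universal coefficient theorem. The preliminary observation I would make is that $X := X(Q,\lambda)$ is compact --- it is a quotient of $T^n \times Q$ --- and that any retraction sequence for $Q$, via Proposition~\ref{prop_ret_build} (and the triangulation remark following it), presents $X$ as a finite $\mathbf{q}$-CW complex, homotopy equivalent to a finite CW complex; in particular $H_\ast(X;\ZZ)$ is finitely generated in each degree.

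Next I would establish torsion-freeness. Fix a prime $p$. The hypothesis furnishes a retraction sequence $\{(B_j,E_j,b_j)\}_{j=1}^\ell$ --- which is permitted to depend on $p$ --- with $\gcd\{p,|G_{E_j}(b_j)|\}=1$ for every $j$, and Theorem~\ref{thm_no_p-torsion} then yields that $H_\ast(X;\ZZ)$ has no $p$-torsion and that $H_{odd}(X;\ZZ/p\ZZ)=0$. Since this holds for every prime $p$ and $H_\ast(X;\ZZ)$ is finitely generated, $H_\ast(X;\ZZ)$ is torsion free, hence free in every degree; this is the first assertion.

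Finally I would show $H_{odd}(X;\ZZ)=0$. Fix any prime $p$. Because $H_{odd-1}(X;\ZZ)$ is free, the universal coefficient theorem gives $H_{odd}(X;\ZZ/p\ZZ)\cong H_{odd}(X;\ZZ)\otimes_\ZZ \ZZ/p\ZZ$ with vanishing $\mathrm{Tor}$ term; since the left-hand side is $0$ by Theorem~\ref{thm_no_p-torsion}, the finitely generated free group $H_{odd}(X;\ZZ)$ becomes trivial after tensoring with $\ZZ/p\ZZ$, and comparing $\ZZ/p\ZZ$-dimension with rank forces $H_{odd}(X;\ZZ)=0$. The one point requiring care --- and the closest thing to an obstacle --- is that the retraction sequence supplied by the hypothesis varies with the prime, so there is in general no single building sequence detecting torsion-freeness at all primes simultaneously; instead Theorem~\ref{thm_no_p-torsion} is invoked one prime at a time and the integral conclusions are assembled afterwards using finite generation. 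The rest is routine homological algebra.
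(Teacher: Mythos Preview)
Your argument is correct. The paper does not actually supply an in-text proof of Theorem~\ref{thm_no-torsion}; it simply records that Theorems~\ref{thm_no_p-torsion} and~\ref{thm_no-torsion} are obtained by combining \cite[Theorems~1.1, 1.2]{BNSS} with Proposition~\ref{prop_ret_build}, treating both conclusions as imported from \cite{BNSS}. What you have written is the natural way to fill in that citation: deduce the integral statement from the $p$-local Theorem~\ref{thm_no_p-torsion} using finite generation of $H_\ast(X;\ZZ)$ and the universal coefficient theorem. Your observation that the retraction sequence may vary with $p$, so that one must assemble the integral conclusion afterwards rather than appeal to a single building sequence, is exactly the point of stating the hypothesis in this form. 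There is nothing to correct.
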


Next two examples are applications of Theorem \ref{thm_no-torsion} 
to toric orbifolds over a polygon and a simplex, respectively. Though 
these results are well known from the literature, for example
\cite{Fis, Jor, KMZ} and \cite{Ka}, 
the same conclusions follow from the results above. 

\begin{example}
Let $Q$ be an $\ell$-gon and $F_1, \dots, F_\ell$ facets of $Q$ as in Example 
\ref{ex_n-gon}. Let $\lambda(F_i):=(a_i, b_i)\in \ZZ^2$ be characteristic 
vectors for $i=1, \dots, \ell$. Hence, we have 
$$|G_Q(v_1)|=\left| \det \begin{bmatrix} a_\ell & a_{1} \\ b_\ell & b_{1} \end{bmatrix}\right|
\quad \text{and}  \quad 
|G_Q(v_i)|=\left| \det \begin{bmatrix} a_{i-1} & a_{i} \\ b_{i-1} & b_{i} \end{bmatrix}\right|
\text{ for } i=2, \dots, \ell.$$
We refer to Remark \ref{rmk_order_of_local_group}. 
Moreover, since the facets are $1$-dimensional,  $G_{F_i}(v_i)$ is trivial for each 
facet $F_i$. Indeed, one can easily check that the induced $\mathcal{R}$-characteristic 
function 
$$\lambda_{F_i} \colon \{v_i, v_{i+1}\} \to \ZZ$$ 
can be always defined by 
$\lambda_{F_i}(v_i)=\lambda_{F_i}(v_{i+1})=\pm 1$, see 
Remark \ref{rmk_preimage_of_face} and Remark \ref{rmk_trivial_q-cell}. 
Now, we assume that 
$$\gcd\{|G_Q(v_1)|, \dots,  |G_Q(v_{\ell})| \} =q.$$
Then, one can always choose a retraction sequence $\{(B_j, E_j, b_j)\}_{j=1}^{\ell}$
of $Q$ such that $\gcd\{p, |G_{E_j}(b_j)|\}=1$ unless $p$ is a 
factor of $q$. Hence, we conclude 
that if $H_{\ast}(X(Q, \lambda); \ZZ)$ has a non-trivial torsion part, then it must have
a $p$-torsion for some $p$ dividing $q$. 
In particular, if $q=1$, then $H_{\ast}(X(Q, \lambda); \ZZ)$ is torsion free. 
\end{example}

\begin{example}\label{ex_wCP}
Consider an $n$-simplex $\Delta^n$ and let $\chi:=(\chi_1, \dots, \chi_{n+1})\in \NN^{n+1}$.
An $\cR$-characteristic function 
$$\lambda \colon \mathcal{F}(\Delta^n):=\{F_1, \dots, F_{n+1}\} \to \ZZ^n$$ 
satisfying 
$\sum_{i=1}^{n+1} \chi_i \lambda(F_i)=\mathbf{0}$ and  
${\rm span}_\ZZ \{\lambda(F_1), \dots, \lambda(F_{n+1})\}=\ZZ^n.$
It is well-known that this $\cR$-characteristic pair $(\Delta^n, \lambda)$ 
defines a weighted projective space $\CP^n_{\chi}$, 
see  \cite[Example 3.1.17]{CLS} or \cite[Section 2.2]{Ful}. 
Without loss of generality, we may assume that 
$\gcd\{\chi_1, \dots, \chi_{n+1}\}=1$. Indeed, for some 
$k\in \NN$, two vectors 
$(\chi_1, \dots, \chi_{n+1})$ and $(k\chi_1, \dots, k\chi_{n+1})$  
yield homeomorphic weighted projective spaces, see for example 
\cite[Theorem 1.1]{BFNR}. 
Note that $\chi_i$ is the order of singularity at 
$[0, \dots, 0,\underset{i\text{-th}}{1},0,\dots 0]\in \CP^n_{\chi}$, which is same as 
$|G_{\Delta^n}(v_i)|$ defined from the $\mathcal{R}$-characteristic pair
$(\Delta^n, \lambda)$, where 
$v_i=F_1\cap \cdots \cap F_{i_1} \cap F_{i+1} \cap \cdots F_{n+1}$. 
Now, following the proof of \cite[Proposition 4.5]{BNSS}, 
one can always 
find retraction sequences $\{(B_j, E_j, b_j)\}_{j=1}^{n+1}$ which 
satisfies the assumption of Theorem \ref{thm_no-torsion}. Hence, 
we conclude that the (co)homology of any weighted projective space is 
torsion free and concentrated in even degrees. 
\end{example}

\section{$J$-construction of toric orbifolds}\label{sec_J-const}

For \emph{toric manifolds},  
the authors of \cite{BBCG15} use a construction introduced in \cite{PB} 
to construct new toric manifolds from a given one.
Indeed, this was done 
by producing a new \emph{characteristic pair} from the original 
one in a canonical way. The process for making a new polytope from the 
given one is called \emph{simplicial wedge construction}. 
Moreover, by a successive procedure, 
a countably infinite family of new toric manifolds can 
arise from the original manifold. In \cite{BBCG15}, where 
the construction was analyzed in the 
context of polyhedral products, the process is described efficiently 
by using a vector 
$J=(j_1, \dots, j_m) \in \NN^m$, where 
$m$ is the number of facets in the original polytope. 
To be more precise, given a positive integral vector 
$J=(j_1, \dots, j_m) \in \NN^m$, one can obtain a new 
toric manifold $M(J)$ from the original toric manifold 
$M$. We refer this procedure as the \textit{$J$-construction} 
 and apply it to toric orbifolds. 
As an example, we shall investigate the class of spaces 
that can be produced from a weighted projective spaces. 

\subsection{The simplicial wedge construction}
%Let $(Q, \lambda)$ be an $\mc{R}$-characteristic pair, $K$ the simplicial complex 
%dual to $\partial Q$, and  $V(K)=\{w_1,\ldots, w_m\}$ the vertex set of $K$, that 
%is, $w_i$ is dual to the facet $F_i\in \mathcal{F}(Q)$. 
Let $K$ be a simplicial complex with vertex set $V(K)=\{w_1, \dots, w_m\}$. 
We call a subset $\sigma\subset V(K)$ \emph{a non-face of} $K$  
if $\sigma$ is not a simplex in $K$. 
A non-face $\sigma$ is called \emph{minimal} if every proper subset 
of $\sigma$ is a simplex in $K$.
Then, the combinatorial type of $K$ is determined by the set 
of minimal non-faces of $K$.
	
For arbitrary positive integral vector $J=(j_1, \dots, j_m)\in \NN^m$,  
a new simplicial complex $K_{(J)}$ is defined on 
$V(K_{(J)}) = \{w_{11}, \ldots, w_{1j_1}, \ldots, w_{1m}, \ldots, w_{mj_m} \}$ 
with minimal non-faces of the form $V_{i_1} \cup \cdots \cup V_{i_k}$, 
where $V_i = \{w_{i1}, \ldots, w_{ij_i}\}$, whenever $\{w_{i_1}, \ldots, w_{i_k}\}$ 
is a minimal non-face of $K$.
	
In the special case $J=(1,\dots, 1,\underset{\substack{\uparrow \\ i-\text{th}}}{2},1,\dots, 1)$,  
we denote $K_{(w_i)} := K_{(J)}$, and refer to it as \emph{the simplicial wedge construction} of 
$K$ on $w_i \in V$, see \cite{PB}.  The following representation is a useful 
combinatorial description of the simplicial wedge construction. 
\begin{equation}\label{eq_wedge_of_K}
K_{(w_i)}=
\big[\{w_{i1}, w_{i2}\} \ast {\rm link}_{K}\{w_i\}\big] \cup 
\big[\{\{w_{i1}\}, \{w_{i2}\}\}\ast (K \setminus \{w_i\})\big],
\end{equation}
where $\ast$ denotes the join of two simplicial complexes
and we identify $w_{r1}$ with $w_{r}$ for $r\neq i$. 

In this paper, we focus on the case when $K$ is dual to the boundary $\partial Q$ of 
a simple polytope $Q$, which we denote by $K_Q$ and refer to as the 
\emph{nerve complex} of $Q$, see for instance \cite[Section 2.2]{BP-book}. 
Notice that the vertex set 
$V(K_Q)=\{w_1, \dots, w_m\}$ bijectively corresponds to the set of facets 
$\mathcal{F}(Q)=\{F_1, \dots, F_m\}$.

\begin{example} \label{ex_simplicial_wedge}
	\begin{enumerate}
	\item Let $K=K_{\Delta^n}$ be the nerve complex of  an $n$-simplex $\Delta^n$
	and $V(K)=\{w_1,\dots,w_{n+1}\}$ its vertex set. Then, there exists 
	only one minimal non-face $\sigma=\{w_1,\dots ,w_{n+1}\}$.  
	For an arbitrary $J=(j_1, \dots, j_{n+1}) \in \NN^{n+1}$,  $K_{(J)}$
	is the simplicial complex on the vertex set 
	$$\{w_{11}, \dots, w_{1j_1}, \dots, w_{n+1,1}, \dots , w_{n+1, j_{n+1}}\}$$ 
	with the unique minimal non-face 
	$\{w_{11}, \dots, w_{1,j_1}, \dots, w_{n+1,1}, \dots , w_{n+1,j_{n+1}}\}$. 
	Hence, we get $K_{(J)}=K_{\Delta^{d(J)}}$, where 
	$d(J):= {\sum_{i=1}^{n+1} j_i}$. 
	Figure \ref{Fig_simp_wedge_of_simplex} describes the case 
	when $n=2$, $J=(1,1,2)$ and the decomposition by \eqref{eq_wedge_of_K}. 
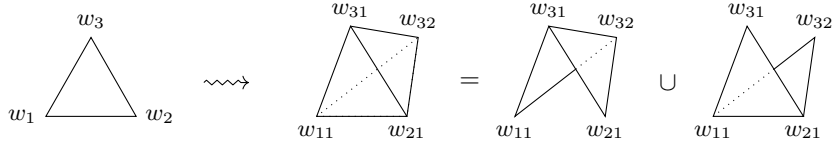
\begin{figure}[h]
	\begin{tikzpicture}[scale=0.3]
            \draw (0,9.5)--(4,9.5)--(2,13)--cycle;
            \node[left] at (0,9.5) {\small{$w_1$}};
            \node[right] at (4,9.5) {\small{$w_2$}};
            \node[above] at (2,13) {\small{$w_3$}};
%            \node at (-3,13) {\small{$K=(\partial\Delta^2)^\ast $ }};
            \draw[line join=round, decorate, decoration=
            {zigzag, segment length=4, 
            amplitude=.9,post=lineto, post length=2pt}, ->] (7,11)--(9,11);
            \draw   (12,9.5)--(16,9.5)--(13.5,13.5)--cycle;
            \draw [dotted] (12,9.5)--(16,9.5)--(16.5,13)--cycle;
            \draw (13.5,13.5)--(16.5,13)--(16,9.5);
            \node[below] at (12,9.5) {\small$w_{11}$};
            \node[below] at (16,9.5) {\small$w_{21}$};
            \node[above] at (13.5,13.5) {\small$w_{31}$};
            \node[above] at (16.5, 13) {\small$w_{32}$};
%            \node at (22,13) {\small{$K(1,1,2)=(\partial\Delta^3)^\ast$}};
\begin{scope}[xshift=250]
\node at (10, 11) {$=$};
\draw  (13.5,13.5)--(16,9.5)--(16.5, 13)--cycle;
\draw [dotted] (288/107+12, 224/107+9.5)--(16.5, 13);
\draw (288/107+12, 224/107+9.5)--(12,9.5)--(13.5,13.5);
%\draw (13.5,13.5)--(16.5,13)--(16,9.5);
\node[below] at (12,9.5) {\small$w_{11}$};
\node[below] at (16,9.5) {\small$w_{21}$};
\node[above] at (13.5,13.5) {\small$w_{31}$};
\node[above] at (16.5, 13) {\small$w_{32}$};
\end{scope}

\begin{scope}[xshift=500]
\node at (10, 11) {$\cup$};
\draw  (12,9.5)--(16,9.5)--(13.5,13.5)--cycle;
\draw [dotted] (12,9.5)--(288/107+12, 224/107+9.5);
\draw (288/107+12, 224/107+9.5)--(16.5,13)--(16, 9.5);
%\draw (13.5,13.5)--(16.5,13)--(16,9.5);
\node[below] at (12,9.5) {\small$w_{11}$};
\node[below] at (16,9.5) {\small$w_{21}$};
\node[above] at (13.5,13.5) {\small$w_{31}$};
\node[above] at (16.5, 13) {\small$w_{32}$};
\end{scope}
	 \end{tikzpicture}
\caption{$(K_{\Delta^2})_{(w_3)}=K_{\Delta^3}$.}
\label{Fig_simp_wedge_of_simplex}
\end{figure}	
\item Consider next the nerve complex 
$$K:=K_{\Delta^{n_1} \times  \Delta^{n_2}}= 
K_{\Delta^{n_1}} \ast K_{\Delta^{n_2}}.$$
of  product of two simplices $\Delta^{n_1}$ and $\Delta^{n_2}$. 
Suppose that  $\{v_1, \dots, v_{n_1+1}\}$ and
$\{w_1, \dots, w_{n_2+1}\}$ are vertex sets of $K_{\Delta^m}$ 
and $K_{\Delta^n}$, respectively. 
Then, $K$ is a simplicial complex on the vertex set 
$\{v_1, \dots, v_{n_1+1},w_1, \dots, w_{n_2+1}\}$
with two minimal non-faces 
$\{v_1, \dots, v_{n_1+1}\}$ and $\{w_1, \dots, w_{n_2+1}\}$. 
Taking $J=(2,1, \dots, 1)\in \NN^{n_1+n_2+2}$, we obtain 
$$K_{(J)}=K_{(v_1)}=K_{\Delta^{n_1+1}}\ast K_{\Delta^{n_2}}.$$
See Figure \ref{fig_J-fication_of_simp.cpx} for the case when $n_1=n_2=1$. 
\begin{figure}[h]
        \begin{tikzpicture}[scale=0.3]
            \draw (0,2)--(2,4)--(4,2)--(2,0)--cycle;
            \node[left] at (0,2) {\small{$v_1$}};
            \node[above] at (2,4) {\small{$w_1$}};
            \node[right] at (4,2) {\small{$v_2$}};
            \node[below] at (2,0) {\small{$w_2$}};
            \draw[line join=round, decorate, decoration={zigzag, segment length=4, 
            			amplitude=.9,post=lineto, post length=2pt}, ->] (6.5,2)--(8.5,2);
\begin{scope}[xshift=-20]
            \draw (14.5,5)--(17,2)--(14.5,0)--(12.5,1.5)--(12,3)--cycle;
            \draw (17,2)--(12.5,1.5)--(14.5,5);
            \draw[dotted] (17,2)--(12,3)--(14.5,0);
            \node[above] at (14.5,5) {\small{$w_1$}};
            \node[above] at (17.3,2) {\small{$v_2$}};
            \node[below] at (14.5,0) {\small{$w_2$}};
            \node[left] at (12.5,1.5) {\small{$v_{11}$}};
            \node[left] at (12,3) {\small{$v_{12}$}};
\end{scope}
\begin{scope}[xshift=270]
\node at (9, 2) {$=$};
            \draw (12.5,1.5)--(14.5,5)--(12,3)--(12.5,1.5)--(14.5, 0);
            \draw[dotted] (12,3)--(18/59+12.5, 63/118+3/2);
            \draw (18/59+12.5, 63/118+3/2)--(14.5,0);
            \node[above] at (14.5,5) {\small{$w_1$}};
            \node[below] at (14.5,0) {\small{$w_2$}};
            \node[left] at (12.5,1.5) {\small{$v_{11}$}};
            \node[left] at (12,3) {\small{$v_{12}$}};
\end{scope}
\begin{scope}[xshift=500]
\node at (8.5, 2) {$\cup$};
            \draw[dotted] (18/59+12.5, 63/118+3/2)--(14.5,0);
            \draw[dotted] (95/78+12, -19/78+3)--(17,2);
            \draw (12.5, 1.5)--(14.5,5)--(17,2)--cycle;
            \draw (12.5,1.5)--(14.5, 0)--(17,2);
            \draw (14.5, 5)--(12,3)--(18/59+12.5, 63/118+3/2);
            \draw (12,3)--(95/78+12, -19/78+3);
            \node[above] at (14.5,5) {\small{$w_1$}};
            \node[below] at (14.5,0) {\small{$w_2$}};
            \node[left] at (12.5,1.5) {\small{$v_{11}$}};
            \node[left] at (12,3) {\small{$v_{12}$}};
\node[above] at (17.3,2) {\small{$v_2$}};
\end{scope}

        \end{tikzpicture}
        \caption{$(K_{\Delta^1} \ast K_{\Delta^1})_{(w_1)}$.}
        \label{fig_J-fication_of_simp.cpx}
     \end{figure}
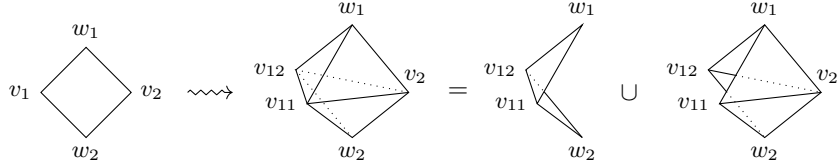
\item In general, if $K_1$ and $K_2$ are simplicial complexes on 
$\{v_1, \dots, v_{m_1}\}$ and $\{ w_1, \dots, w_{m_2}\}$, respectively, then
\begin{equation}\label{eq_wedge_of_join}
(K_1 \ast K_2)_{(v_i)}={K_1}_{(v_i)}\ast K_2 \text{ and } 
(K_1 \ast K_2)_{(w_{j})}=K_1\ast {K_2}_{(w_j)}
\end{equation}
for $i\in \{1, \dots, m_1\}$ and $j\in \{1, \dots, m_2\}$. Indeed, one can see these 
relations by comparing minimal non-faces. 
\end{enumerate}
\end{example}

\subsection{The polytopal wedge construction}\label{subsec_polytopal_wedge}
According to \cite[page 582]{PB}, if $K$ is a dual to the boundary of a simple polytope $Q$, 
then $K_{(w_i)}$ is again a simplicial 
complex which is dual to the boundary of a simple polytope. 
Notice that, for an arbitrary $J\in \NN^m$, $K_{(J)}$ can be 
obtained by the iterated procedure of simplicial wedge 
construction. Hence, we can see that $K_{(J)}$ is also dual to 
the boundary of a simple polytope which we denote by $Q_{(J)}$.

In particular, when $J=J':=(1,\dots,1,2,1,\dots,1)\in \NN^m$, $Q_{(J')}$ 
is homeomorphic to 
\begin{equation}\label{eq_polytopal_wedge}
Q_{(J')}:=(Q\times I)/_{\sim_{F_i}}, \text{ where  } (x, t)\sim_{F_i} (y, 0) \text{ if } x=y \in F_i
\end{equation}
as manifolds with corners. Indeed, $(Q\times I)/_{\sim_{F_i}}$ has the following 
facets
$$\big\{Q^+, Q^-\big \} \cup 
\big\{(F\times I)/_{\sim_{F_i}} ~\mid~ F\in \mathcal{F}(Q)\setminus\{F_i\}\big\}$$
where
$Q^+:=Q\times \{1\}$ and $Q^-:=Q\times \{0\}$ 
intersect  at $(F_i \times I)/_{\sim_{F_i}} \cong F_i$ and each of 
$Q^+$ and $Q^-$ intersects
all other facets $\big\{(F\times I)/_{\sim_{F_i}} \mid F\in \mathcal{F}(Q)\setminus\{F_i\}\big\}$. 
Notice that this observation is exactly the dual 
representation of \eqref{eq_wedge_of_K}
given by associating $Q^-$, $Q^+$, $F\times I /_{F_i}$ for 
$F\in \mathcal{F}(Q)\setminus \{ F_i\}$
with $w_{i1}$, $w_{i2}$ and $w\in V(K)\setminus \{w_i\}$, respectively, 
where $w$ is the dual of $F$. 
We call $Q_{(J')}$  the \emph{polytopal wedge construction} of $Q$ 
with respect to $F_i\in \mathcal{F}(Q)$. We may also denote 
$Q_{(J')}$ by $Q_{(F_i)}$ to emphasize the chosen facet $F_i$. 
See Figure \ref{fig_polytopal_wedge} 
for the example of the polytopal wedge construction of $5$-gon. 

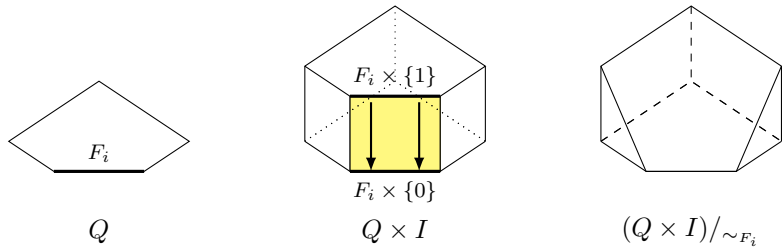
\begin{figure}[h]
\begin{tikzpicture}[scale=0.4]
\begin{scope}
		\draw (0,2)--(3,4)--(6,2)--(4.5,1)--(1.5,1)--cycle;
		\node at (3,-1) {$Q$};
		\draw[very thick] (4.5,1)--(1.5,1);
		\node[above] at (3,1) {\footnotesize$F_i$};
\end{scope}	

\begin{scope}[xshift=280]
\draw[fill=yellow, opacity=0.5] (4.5,1)--(1.5,1)--(1.5,3.5)--(4.5, 3.5)--cycle;
		\draw (6,2)--(4.5,1)--(1.5,1)--(0,2);
		\draw[dotted] (0,2)--(3,4)--(6,2);
		\draw (0,4.5)--(3,6.5)--(6,4.5)--(4.5,3.5)--(1.5,3.5)--cycle;
		\draw[dotted] (0,2)--(3,4)--(6,2);
		\draw[dotted] (3,6.5)--(3,4);
		\draw (0,4.5)--(0,2);
		\draw (1.5,3.5)--(1.5,1);
		\draw (4.5,1)--(4.5,3.5);
		\draw (6,2)--(6,4.5);
\begin{scope}[>=latex]
\draw[thick,->] (3.8,3.3)--(3.8,1);
\draw[thick, ->] (2.2,3.3)--(2.2,1);
\end{scope}
\draw[very thick] (4.5,1)--(1.5,1);
\draw[very thick] (4.5,3.5)--(1.5,3.5);
\node[above] at (3, 3.5) {\footnotesize$F_i\times \{1\}$};
\node[below] at (3, 1) {\footnotesize$F_i\times \{0\}$};
		\node at (3,-1) {$Q\times I$};
\end{scope}	

\begin{scope}[xshift=560]
		\draw (6,2)--(4.5,1)--(1.5,1)--(0,2);
		\draw[dashed] (0,2)--(3,4)--(6,2);
		\draw (0,4.5)--(3,6.5)--(6,4.5)--(4.5,1)--(1.5,1)--cycle;
		\draw[dashed] (0,2)--(3,4)--(6,2);
		\draw[dashed] (3,6.5)--(3,4);
		\draw (0,4.5)--(0,2);
%		\draw (1.5,3.5)--(1.5,1);
%		\draw (4.5,1)--(4.5,3.5);
		\draw (6,2)--(6,4.5);
		\node at (3,-1) {$(Q\times I)/_{\sim_{F_i}}$};
\end{scope}	
	\end{tikzpicture}
\caption{A polytopal wedge construction of $5$-gon.}
\label{fig_polytopal_wedge}
\end{figure}

The following example is dual to Example \ref{ex_simplicial_wedge}. 
\begin{example}\label{ex_polytopal_wedge}
\begin{enumerate}
\item $\Delta^n_{(F)}=\Delta^{n+1}$, where $F$ is any facet of $\Delta^n$. 
\item $(\Delta^{n_1} \times \Delta^{n_2})_{(E\times \Delta^{n_1})}= 
\Delta^{n_1+1} \times \Delta^{n_2}$, 
where $E$ is a facet of $\Delta^{n_1}$, see Figure \ref{fig_J-fication_of_simp.cpx} for the case 
when $n_1=n_2=1$. 
\item In general, given two simple polytopes $P$ and $Q$, any facet of $P\times Q$ 
is of the form $E\times Q$ for some facet $E$ of $P$, or $P\times F$ for some 
facet $F$ of $Q$. Then, the relation 
$K_{P\times Q} = K_P \ast  K_Q$ 
together with \eqref{eq_wedge_of_join}
leads us to the following:
\begin{equation*}\label{eq_wedge_of_product_of_polytopes}
(P\times Q)_{(E\times Q)}= P_{(E)}\times Q \quad \text{and} \quad 
(P\times Q)_{(P\times F)}= P\times Q_{(F)}.
\end{equation*}

\end{enumerate}
\end{example}

\subsection{A new $\cR$-characteristic function}
Let $(Q, \lambda)$ be an $\mathcal{R}$-characteristic pair and  $K$ 
the simplicial complex dual to $\partial Q$ as above. 
As in previous sections, $m$ and $n$ denote
the number of facets of $Q$ and the dimension of $Q$, respectively. 
Given a vector $J=(j_1, \dots, j_m)\in \NN^m$, we define a matrix 
$\Lambda_{(J)}$ of size 
$(d(J)-m+n)) \times d(J) $ as follows, where 
$d(J):= \sum_{i=1}^m j_i$ ;
\begin{align}\label{eq_lambda(J)_matrix}
\Lambda_{(J)}= \tiny{\left[ \begin{array}{ccc|ccc|ccc|ccc|ccccc} 
        &&&&&&&&&&&&-1&& \\                           
        & I_{j_1-1}& & &  & & &  & & &  & &  \vdots &&  &  \\                           
        &&&&&&&&&&&&-1&&& \\ \hline                           
        &&&&&&&&&&&&&-1&&\\                          
        &&&&I_{j_2-1}&&&&&&&&&\vdots& &\\                           
        &&&&&&&&&&&&&-1&&\\ \hline                           
        &&&&&&&&&&&&&&&\\                           
        &&&&&&&\ddots&&&&&&&\vdots& \\                           
        &&&&&&&&&&&&&&&\\ \hline                           
        &&&&&&&&&&&&&&&-1\\                           
        &&&&&&&&&&I_{j_m-1}&&&&&\vdots\\                           
        &&&&&&&&&&&&&&&-1\\ \hline                           
        &&&&&&&&&&&&&&&\\                           
        &&&&&&&&&&&&&\Lambda& & \\                           
        &&&&&&&&&&&&&&&                     
\end{array}\right]},
\end{align}    
where all the columns of the matrix are indexed respectively by
\begin{equation}\label{eq_column_index_of_lambda(J)}
\{ w_{12}, \ldots , w_{1{j_1}}, w_{22}, \ldots , w_{2{j_2}}, \ldots , w_{m2}, 
\ldots , w_{m{j_m}}, w_{11}, \ldots , w_{m1} \},
\end{equation} 
all of the entries in the empty spaces are zero and $\Lambda$ is the original 
characteristic matrix associated to $\lambda$. Notice that 
the indexing in \eqref{eq_column_index_of_lambda(J)} bijectively corresponds to 
the vertex set $V(K_{(J)})$. 

Let $F_{ik}$ be the facet of $Q_{(J)}$ dual to the vertex $w_{ik}$, 
where $i\in\{1, \dots, m\}$ and $k\in \{1, \dots, j_i\}$. 
Then, the matrix $\Lambda_{(J)}$ defines a function 
\begin{equation}\label{eq_lambda(J)}
\lambda_{(J)} \colon \mathcal{F}(Q_{(J)}) \to \ZZ^{d(J)-m+n},
\end{equation}
by assigning to the facet $F_{ik}$ the transpose of the column 
vector of $\Lambda_{(J)}$ indexed by $w_{ik}$. 

If $\lambda$ satisfies Davis and Januszkiewicz's regularity condition 
$(\ast)$, \cite[p.423]{DJ}, 
then so does $\lambda_{(J)}$ for all $J\in \NN^m$, see \cite[Theorem 3.2]{BBCG15}.
The same proof goes through by replacing condition $(\ast)$ 
$$\det \left[\begin{array}{c|c|c} 
\lambda(F_{i_1})^t & \cdots &\lambda(F_{i_{n}})^t \end{array}\right]=\pm 1$$
with the orbifold condition \eqref{def_R-char_fun}
$$\det \left[\begin{array}{c|c|c} 
\lambda(F_{i_1})^t & \cdots &\lambda(F_{i_{n}})^t \end{array}\right]\neq 0,$$
to give the next lemma.
\begin{lemma}\label{lem_lambda(J)_satisfies_the_condition}
Let $(Q, \lambda)$ be an $\mathcal{R}$-characteristic pair. Then, 
for arbitrary $J\in \NN^m$, the function \eqref{eq_lambda(J)} satisfies the 
orbifold condition \eqref{def_R-char_fun}. 
\end{lemma}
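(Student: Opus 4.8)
The plan is to verify the orbifold condition \eqref{def_R-char_fun} directly on $\lambda_{(J)}$, i.e., to show that for any collection of facets of $Q_{(J)}$ with nonempty common intersection, the corresponding columns of $\Lambda_{(J)}$ are linearly independent. Since it suffices to check maximal such collections, I would fix a vertex $v$ of $Q_{(J)}$ and show that the $(d(J)-m+n)$ columns of $\Lambda_{(J)}$ indexed by the facets through $v$ form a nonsingular square matrix; this is the analogue of condition $(\ast)$ with $\pm 1$ replaced by ``nonzero determinant'', exactly as indicated in the text preceding the lemma. The key combinatorial input is the description of vertices of $Q_{(J)}$: dually, a maximal simplex of $K_{(J)}$ is obtained from a maximal simplex $\{w_{i_1},\dots,w_{i_n}\}$ of $K$ (a vertex of $Q$) by, for each index $i\notin\{i_1,\dots,i_n\}$, choosing exactly \emph{one} of the $j_i$ vertices $w_{i1},\dots,w_{ij_i}$ to omit, and for each index $i\in\{i_1,\dots,i_n\}$ omitting exactly one of $w_{i1},\dots,w_{ij_i}$ as well — reflecting that the minimal non-faces of $K_{(J)}$ are the full blocks $V_{i_1}\cup\cdots\cup V_{i_k}$.

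Granting that combinatorial description, the main step is a block-triangular determinant computation. After reordering rows and columns, the square submatrix of $\Lambda_{(J)}$ associated to $v$ decomposes into the identity blocks $I_{j_i-1}$ coming from the top part of \eqref{eq_lambda(J)_matrix} (restricted to whichever columns $w_{ik}$, $k\ge 2$, survive) together with the $n$ columns drawn from the bottom block containing $\Lambda$; the $-1$ columns indexed by $w_{i1}$ interact with these in a controlled way. Performing column operations that clear the $-1$ entries using the identity blocks, one reduces the determinant, up to sign, to a determinant of the form
\[
\pm\det\left[\begin{array}{c|c|c}\lambda(F_{s_1})^t & \cdots & \lambda(F_{s_n})^t\end{array}\right],
\]
where $\{F_{s_1},\dots,F_{s_n}\}$ is precisely the facet set of the vertex of $Q$ underlying $v$. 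By \eqref{def_R-char_fun} applied to the original pair $(Q,\lambda)$ this determinant is nonzero, and hence so is $\det$ of the submatrix for $v$. Running this over all vertices $v$ of $Q_{(J)}$ gives the claim. For general $J$ one either does the computation directly in the block form above, or — cleaner — reduces to the case $J'=(1,\dots,1,2,1,\dots,1)$ by the iterative nature of the construction, checking that a single simplicial wedge preserves \eqref{def_R-char_fun} and then inducting on $d(J)-m$.

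The main obstacle I expect is purely bookkeeping: matching the indexing of columns in \eqref{eq_column_index_of_lambda(J)} with the correct description of which columns survive at a given vertex $v$, and tracking the signs and row/column permutations so that the reduction to the original $n\times n$ minor is transparent rather than a morass of indices. There is no genuinely new idea beyond what is in \cite[Theorem 3.2]{BBCG15}; the only substantive change is that wherever that argument invokes unimodularity ($\det=\pm1$) one instead invokes nonvanishing of the determinant, and the column operations used in the reduction are over $\ZZ$ with $\pm1$ pivots coming from the identity blocks, so they do not affect whether the final determinant is zero. Thus the lemma follows, and the single-wedge-plus-induction route makes the write-up shortest.
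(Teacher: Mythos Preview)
Your approach is exactly what the paper does: it gives no separate proof but simply observes that the argument of \cite[Theorem 3.2]{BBCG15} goes through verbatim once ``$\det=\pm1$'' is replaced by ``$\det\neq 0$'', which is precisely your plan (including the option of reducing to a single wedge and inducting).

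One small slip in your combinatorial description of maximal simplices of $K_{(J)}$: for indices $i\in\{i_1,\dots,i_n\}$ (those appearing in the chosen maximal simplex of $K$) you should retain \emph{all} of $V_i=\{w_{i1},\dots,w_{ij_i}\}$, not omit one; omission of a single $w_{ik}$ occurs only for the $m-n$ indices $i\notin\{i_1,\dots,i_n\}$, giving the correct count $d(J)-(m-n)$ of vertices in a maximal simplex. With that correction your block-triangular reduction goes through as you describe.
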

\noindent Hence from  $(Q, \lambda)$, one can obtain an infinite family of toric orbifolds
\begin{equation*}
X_{(J)}:=X(Q_{(J)}, \lambda_{(J)})
\end{equation*}
for arbitrary $J\in \NN^m$. 

\begin{example}\label{ex_J-cons_of_w_proj_sp}
Let $Q$ be an $n$-simplex and consider the $\mathcal{R}$-characteristic 
function from Example \ref{ex_wCP}. 
As in Example \ref{ex_simplicial_wedge}-(1), for an arbitrary vector 
$J=(j_1, \dots, j_{n+1})\in \NN^{n+1}$, 
$$\Delta^n_{(J)}=\Delta^{d(J)-1}.$$ 
Then the function $\lambda_{(J)}$, defined by \eqref{eq_lambda(J)}, satisfies
the equation 
$$\sum_{k=1}^{j_1} \chi_1 \lambda_{(J)}(F_{1k}) + \dots + 
\sum_{k=1}^{j_{n+1}}\chi_n\lambda_{(J)}(F_{n+1,k}) =\mathbf{0},$$
and one can show that the new characteristic vectors span the whole 
lattice $\ZZ^{d(J)-1}$, since the original $\mathcal{R}$-characteristic 
vectors in Example \ref{ex_wCP} span $\ZZ^n$.   
Hence, we conclude that $(\CP^n_{\chi})_{(J)}$ is the weighted 
projective space $\CP^{d(J)-1}_{\chi_{(J)}}$, 
where 
$$\chi_{(J)}=
( \underbrace{\chi_1,\ldots,\chi_1}_{j_1}, \ldots , 
\underbrace{\chi_k,\ldots,\chi_k}_{j_k}, \ldots,  
\underbrace{\chi_{n+1},\ldots,\chi_{n+1}}_{j_{n+1}})\in \NN^{d(J)}.$$
\end{example}

In the special case $J' :=  (1,\dots,1,2,1,\dots,1)$, the matrix of \eqref{eq_lambda(J)_matrix} 
takes a particularly simple form, 
\begin{equation}\label{eq_char_matrix_J'}
\Lambda_{(J')}=\begin{bmatrix}
1 & 0 & \cdots &  -1  & \cdots & 0\\
0 &  &  &    &  & \\
\vdots & \lambda(F_1) & \cdots&   \lambda(F_i) & 
 \cdots &\lambda(F_m)\\
0 &  &  &      & & 
\end{bmatrix}.
\end{equation}
Hence, the characteristic function $\lambda_{(J')} \colon \mathcal{F}(Q_{(J')}) \to \ZZ^{n+1}$ 
is defined by 
\begin{align*}
\lambda_{(J')}(Q^+)&=(1, 0, \dots, 0)^t,\\
\lambda_{(J')}(Q^-)&=(-1, \lambda(F_i))^t \text{ and }\\
\lambda_{(J')}(F_{s1})&=(-1, \lambda(F_s))^t,
\text{ for } s\in \{1, \dots, i-1, i+1, \dots, m\}.
\end{align*}

\begin{remark}
The two induced $\cR$-characteristic functions ${\lambda_{(J')}}_{Q^+}$ and  
${\lambda_{(J')}}_{Q^-}$ coincide with $\lambda$. 
This implies that $X_{(2, 1,\dots, 1)}$ has two copies of original orbifold 
$X$ as suborbifolds defined in Section \ref{sec_toric_orb_orb_lens}. 
%Hence, $X_{(2,1, \dots, 1)}$ is a natural extension of $X$ from this point of view. 
Ewald \cite{Ew} called $X_{(2,1,\dots,1)}$ the \emph{canonical extension} of $X$, 
when $X$ is a toric variety. We refer to \cite{BBCG18,BBCG15}, \cite{CP, CP2nd}
and \cite{Ew} for more topological and geometrical observations about the 
wedge operation on toric manifolds. 
\end{remark}

The following proposition confirms that $X_{(J)}$ for arbitrary 
$J=(j_1, \dots, j_m) \in \NN^m$ can be constructed from 
iterated wedge operations as mentioned in \cite[Remark 3.1]{BBCG15}.
Here, we give an explicit proof. 
%Hence, essentially it is enough to 
%consider the case when $J=(1, \dots, 1, 2,1, \dots, 1)$ to  understand 
%the topology of $X_{(J)}$ for arbitrary $J=(j_1, \dots, j_m) \in \NN^m$. 

\begin{proposition}\label{prop_(2,1,...1)(2,1,...,1)=(3,1,...,1)}
Let $X$ be the toric orbifold associated to an $\cR$-characteristic pair $(Q, \lambda)$. 
Then, two toric orbifolds $X_{(3,1,\dots,1)}$ and $Y_{(2,1,\dots,1)}$ where 
$Y=X_{(2,1,\dots, 1)}$ are homeomorphic. 
\end{proposition}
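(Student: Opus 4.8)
The plan is to prove the statement combinatorially at the level of $\mathcal{R}$-characteristic pairs, since a toric orbifold $X(P,\mu)$ is determined up to equivariant homeomorphism by the pair $(P,\mu)$, and both $X_{(3,1,\dots,1)}$ and $Y_{(2,1,\dots,1)}$ are of this form. Concretely, writing $K = K_Q$ for the nerve complex with vertices $\{w_1,\dots,w_m\}$ corresponding to $\mathcal{F}(Q) = \{F_1,\dots,F_m\}$, the first step is to verify the purely simplicial identity $(K_{(w_1)})_{(w_{11})} = K_{(J)}$ where $J = (3,1,\dots,1)$, i.e.\ that applying the simplicial wedge construction on $w_1$ and then applying it again on one of the two resulting vertices $w_{11}$ yields the complex with $d(J) = m+2$ vertices whose minimal non-faces are obtained by fattening each minimal non-face $\{w_{i_1},\dots,w_{i_k}\}$ of $K$ to $V_{i_1}\cup\dots\cup V_{i_k}$ with $V_1 = \{w_{11},w_{12},w_{13}\}$ and $V_i = \{w_i\}$ for $i\ge 2$. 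This is immediate from the description of minimal non-faces of $K_{(w_i)}$: a minimal non-face of $K$ containing $w_1$ becomes one containing $\{w_{11},w_{12}\}$, and a second wedge on $w_{11}$ replaces $w_{11}$ by $\{w_{111},w_{112}\}$, so after relabelling $w_{111},w_{112},w_{12}$ as the three vertices of $V_1$ we recover exactly $K_{(J)}$; non-faces not meeting $w_1$ are unaffected by either operation. Dually this gives $Q_{(3,1,\dots,1)} \cong (Q_{(F_1)})_{(F_{11})}$ as simple polytopes (equivalently as manifolds with corners), which identifies the underlying polytopes of the two orbifolds in the statement.

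The second step is to match the $\mathcal{R}$-characteristic functions. For $Y = X_{(2,1,\dots,1)} = X(Q_{(2,1,\dots,1)},\lambda_{(2,1,\dots,1)})$, the characteristic matrix $\Lambda_{(2,1,\dots,1)}$ is the $(n+1)\times(m+1)$ matrix of \eqref{eq_char_matrix_J'} with $i=1$; applying the $J'=(2,1,\dots,1)$ construction to $Y$ on the facet corresponding to the first column introduces one new row and one new column according to the recipe \eqref{eq_lambda(J)_matrix}, producing an $(n+2)\times(m+2)$ matrix. On the other side, $\Lambda_{(3,1,\dots,1)}$ is directly given by \eqref{eq_lambda(J)_matrix} with $J = (3,1,\dots,1)$: it has a $2\times 2$ identity block $I_{j_1-1} = I_2$ in the new rows, two columns of $-1$'s coming from $w_{11}$, and the block $\Lambda$ below. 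The task is to exhibit a permutation of the columns (corresponding to the relabelling of vertices of $V_1$ fixed in Step 1) together with, if necessary, an invertible integer change of basis on $\ZZ^{n+2}$ (i.e.\ row operations) carrying one matrix to the other; since both characteristic functions are built from the \emph{same} original $\Lambda$ by the same local rule, this comes down to checking that iterating the $(2,1,\dots,1)$ block twice reproduces the $(3,1,\dots,1)$ block, which is a small explicit $2\times 2$ versus $1\times 1$-twice computation. I would record this as identifying $\lambda_{(3,1,\dots,1)}$ with $(\lambda_{(2,1,\dots,1)})_{(2,1,\dots,1)}$ up to the automorphism of $\ZZ^{n+2}$ induced by a change of basis, which does not change the homeomorphism type of the toric orbifold.

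Having matched both the polytope (Step 1) and the characteristic function (Step 2), the conclusion follows because equivariantly homeomorphic $\mathcal{R}$-characteristic pairs yield equivariantly homeomorphic toric orbifolds, by the defining construction $X(P,\mu) = (T^n\times P)/\!\sim$ reviewed in Section~\ref{sec_toric_orb_orb_lens}: the homeomorphism of polytopes intertwines the face-posets, hence the subtori $T_{E(x)}$, and a change of lattice basis is realized by an automorphism of $T^n$, so the induced map on quotients is an equivariant homeomorphism. I expect the main obstacle to be bookkeeping in Step 2 — keeping the column indices \eqref{eq_column_index_of_lambda(J)} straight through two successive applications of the block formula \eqref{eq_lambda(J)_matrix} and pinning down the exact column permutation and basis change so that the two matrices literally agree; the simplicial identity in Step 1 and the final descent to orbifolds are essentially formal. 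A clean way to organize Step 2 is to first treat the model case $Q = \Delta^{m-1}$ (where everything is a weighted projective space and one can invoke Example~\ref{ex_J-cons_of_w_proj_sp}) to see the pattern, then observe that the argument is entirely local to the columns indexed by $V_1$ and the new rows, hence goes through verbatim for general $(Q,\lambda)$.
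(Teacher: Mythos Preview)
Your proposal is correct and follows essentially the same route as the paper's proof: first identify the simplicial complexes $(K_{(2,1,\dots,1)})_{(2,1,\dots,1)}$ and $K_{(3,1,\dots,1)}$ by comparing minimal non-faces, then show the two characteristic matrices agree up to an element of $SL_{n+2}(\ZZ)$, and finally descend to the quotient via the induced automorphism of $T^{n+2}$. The paper is simply terser in Step~2, writing down the explicit change-of-basis matrix
\[
\begin{pmatrix}
1 & -1 & 0 & \cdots & 0\\
0 & 1 & 0 & \cdots & 0\\
\vdots & & \ddots & & \vdots\\
0 & \cdots & \cdots & 0 & 1
\end{pmatrix}\in SL_{n+2}(\ZZ)
\]
rather than arguing by pattern from a model case; no column permutation is actually needed once you fix the indexing \eqref{eq_column_index_of_lambda(J)} consistently.
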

\begin{proof}
Let $\{w_1, \dots, w_m\}$ be the vertices of $K:=K_Q$. 
Then, one can see from the definition of $K_{(J)}$ or \eqref{eq_wedge_of_K} that 
both $K_{(3,1,\dots,1)}$ and $(K_{(2,1,\dots,1)})_{(2,1,\dots,1)}$ have the same number of 
vertices and have the same minimal non faces. 
Therefore, the simple polytopes 
$Q_{(3,1,\dots,1)}$ and $(Q_{(2,1\dots,1)})_{(2,1,\dots,1)}$
determined by 
$K_{(3,1,\dots,1)}$ and $(K_{(2,1,\dots,1)})_{(2,1,\dots,1)}$, respectively, 
are homeomorphic as manifolds with corners. 
%
%
%they determine simple polytopes 
%$Q_{(3,1,\dots,1)}$ and $(Q_{(2,1\dots,1)})_{(2,1,\dots,1)}$, respectively, 
%which are homeomorphic as manifold with corners. 

The two characteristic matrices $\Lambda_{(3,1,\dots, 1)}$ and 
$(\Lambda_{(2,1,\dots,1)})_{(2,1,\dots,1)}$ differ by an element 
$$\begin{pmatrix}
1& -1 & 0 & \cdots & 0\\
0&  1 & 0 & \cdots & 0\\
 \vdots  &     & \ddots & &\vdots  \\
 \vdots & & & 1& 0 \\
  0 &\cdots&\cdots &0&1
\end{pmatrix} \in SL_{n+2}(\ZZ),$$
 which induces an automorphism 
$$\phi \colon T^{n+2} \to T^{n+2}$$
given by $\phi(t_1, \dots, t_{n+2})= (t_1t_2^{-1}, t_2, \dots, t_{n+2})$. 
Finally, the map  
$$\phi \times id \colon  T^{n+2} \times Q_{(3,1,\dots, 1)} \to 
T^{n+2} \times (Q_{(2,1\dots,1)})_{(2,1,\dots,1)}$$
induces a homeomorphism  from $X_{(3,1,\dots, 1)}$ to 
$(X_{(2,1,\dots,1)})_{(2,1,\dots,1)}$. 
\end{proof}

\section{Homology of $X_{(J)}$}\label{sec_homology_of_X_{(J)}}
In this section, we shall see that the homology of $X_{(J)}$ depends on 
$J$ and the homology of $X$.
First we compare retraction sequences for the two polytopes $Q$ and $Q_{(J)}$. 
It suffices to consider the case when $J'=(1, \ldots, 1, 2, 1, \ldots, 1)$,
because $Q_{(J)}$ can be constructed by the iterations of 
the polytopal wedge construction, as in Subsection \ref{subsec_polytopal_wedge}. 
We assume that the entry $2$ appears in $i$-th coordinate of $J'$, hence 
it corresponds to the $i$-th facet $F_i$ of $Q$. 

Let $V(Q)=\{v_1, \dots, v_\ell\}$ be the vertices of $Q$ and 
 $V(F_i):=\{v_{i_1}, \ldots, v_{i_k}\}$ the vertices of $F_i$. 
Now, the vertices of $Q_{(J')}$ are identified  as 
\begin{equation}\label{eq_vertices_of_Q_tilde}
V(Q_{(J')})=\left\{ v^+, v^- \mid v\in  V(Q)\setminus V(F_i) \right\} \cup 
\big\{v_{i_1}^-, \ldots, v_{i_k}^-\big\}, 
\end{equation}
where we write $v^+:=v\times \{1\}$ and $v^-:=v\times \{0\}$ for notational convenience. 
%\begin{align*}
%V\big(Q^+)\big)&=
%\left\{ v^+ \mid  v\in  V(Q)\setminus V(F_i) \right\}\cup 
%\big\{ v^0 \mid v\in V(F_i)\big\},\\
%V\big(Q^- \big)&=
%\left\{v^- \mid  v\in  V(Q)\setminus V(F_i) \right\} \cup 
%\big\{ v^0 \mid v\in V(F_i) \big\}.
%\end{align*}
%See Figure \ref{fig_wedge_of_5-gon} for the example of the wedge of $5$-gon. 
%
%\begin{figure}
%	\begin{tikzpicture}[scale=0.6]
%		\draw (1.5,1)--(0,4)--(3,6)--(6,4)--(4.5,1)--cycle;
%		\draw (1.5,1)--(0,2)--(0,4);
%		\draw (4.5,1)--(6,2)--(6,4);
%		\draw[dashed] (0,2)--(3,4)--(6,2);
%		\draw[dashed] (3,6)--(3,4);
%				
%		\node[below] at (1.5,1) {$v_1^-$};
%		\node[below] at (4.5,1) {$v_2^-$};
%		
%		\node[right] at (6,2) {$v_3^-$};
%		\node[right] at (6,4) {$v_3^+$};
%		
%		\node[below] at (3,4.1) {$v_4^-$};
%		\node[above] at (3,6) {$v_4^+$};
%		
%		\node[left] at (0,2) {$v_5^-$};
%		\node[left] at (0,4) {$v_5^+$};		
%	\end{tikzpicture}
%	\caption{A wedge of 5-gon.}
%	\label{fig_wedge_of_5-gon}
%\end{figure}

Now, we introduce the following two lemmas about the finite group 
$G_{Q_{(J')}}(u)$, as defined in Section \ref{sec_toric_orb_orb_lens}, 
associated to each vertex $u$ of $Q_{(J')}$.

\begin{lemma}\label{lem_loc_goup_isom_for_v_0}
For each  vertex $v_{i_r} \in V(F_i)$,  $r\in \{1, \dots,k\}$, 
the finite group $G_{Q_{(J')}}(v_{i_r}^-)$ is 
isomorphic to $G_Q(v_{i_r}).$ 
\end{lemma}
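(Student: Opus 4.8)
The plan is to compute the square matrix $\Lambda_{Q_{(J')},\,v_{i_r}^-}$ explicitly and compare it, up to unimodular column/row operations, with $\Lambda_{Q,\,v_{i_r}}$. First I would identify the facets of $Q_{(J')}$ meeting at the vertex $v_{i_r}^-$. Since $v_{i_r}\in V(F_i)$, in $Q$ the vertex $v_{i_r}$ is the intersection of $F_i$ with $n-1$ other facets, say $F_i\cap F_{s_1}\cap\dots\cap F_{s_{n-1}}$. Under the polytopal wedge construction \eqref{eq_polytopal_wedge}, the facet $F_i$ of $Q$ corresponds to the intersection $Q^+\cap Q^-$, while the facet $F_s$ for $s\neq i$ corresponds to $(F_s\times I)/_{\sim_{F_i}}$. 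A point on $F_i$ in the copy $Q^-=Q\times\{0\}$ lies on $Q^-$ and on each $(F_{s_a}\times I)/_{\sim_{F_i}}$, but \emph{not} on $Q^+$. Hence $v_{i_r}^-$ is the intersection of exactly the $n+1$ facets $Q^-,\, F_{s_1 1},\dots,F_{s_{n-1} 1}$ of $Q_{(J')}$ — here I use the notation $F_{s1}$ from \eqref{eq_char_matrix_J'} for the facet dual to $w_{s1}$, i.e. $(F_s\times I)/_{\sim_{F_i}}$ — so that $\Lambda_{Q_{(J')},\,v_{i_r}^-}$ is the $(n+1)\times(n+1)$ matrix whose columns are
\[
\lambda_{(J')}(Q^-)=\begin{pmatrix}-1\\ \lambda(F_i)\end{pmatrix},\qquad
\lambda_{(J')}(F_{s_a 1})=\begin{pmatrix}-1\\ \lambda(F_{s_a})\end{pmatrix}\ (a=1,\dots,n-1).
\]
Wait — that is only $n$ columns; I must recount, and this recounting is the crux of the argument.

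The point is that at $v_{i_r}^-$ the facet $Q^-$ plays the role that $F_i$ played at $v_{i_r}$, but we gain one extra dimension, so in fact $v_{i_r}^-$ lies on $Q^-$ together with the images of all $n$ facets through $v_{i_r}$ in $Q$ other than possibly $F_i$; carefully, $v_{i_r}=\bigcap_{a=1}^{n}F_{t_a}$ with $F_{t_1}=F_i$, and $v_{i_r}^-$ is the intersection of $Q^-$ with $(F_{t_a}\times I)/_{\sim_{F_i}}$ for $a=2,\dots,n$, which is $n$ facets; since $\dim Q_{(J')}=n+1$, a vertex lies on $n+1$ facets, so $v_{i_r}^-$ additionally lies on the facet coming from $F_i$ itself, which is $Q^+$ — but $v_{i_r}\in F_i$ forces $v_{i_r}^-\in Q^-$ only if $v_{i_r}$ is on the "gluing facet" $F_i$, in which case $v_{i_r}^-=v_{i_r}^+$ is identified and lies on both $Q^+$ and $Q^-$. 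So the correct list of facets at $v_{i_r}^-$ is $\{Q^+,\,Q^-,\,F_{t_2 1},\dots,F_{t_n 1}\}$, giving the $(n+1)\times(n+1)$ matrix
\[
\Lambda_{Q_{(J')},\,v_{i_r}^-}=\left[\begin{array}{c|c|c|c|c}
1 & -1 & -1 & \cdots & -1\\\hline
0 & \lambda(F_i)^t & \lambda(F_{t_2})^t & \cdots & \lambda(F_{t_n})^t
\end{array}\right].
\]

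Now I would perform the column operation adding the first column (which is $e_1=(1,0,\dots,0)^t$) to each of the remaining $n$ columns; this is an elementary unimodular operation that does not change $|\det|$, and it clears the top row of all columns but the first, turning the matrix into a block form whose lower-right $n\times n$ block is precisely
\[
\left[\begin{array}{c|c|c}
\lambda(F_i)^t & \lambda(F_{t_2})^t & \cdots & \lambda(F_{t_n})^t
\end{array}\right]=\Lambda_{Q,\,v_{i_r}},
\]
and whose first column is $e_1$. Hence $\Lambda_{Q_{(J')},\,v_{i_r}^-}$ is, up to multiplication by a matrix in $SL_{n+1}(\ZZ)$, the block-diagonal matrix $\mathrm{diag}(1,\Lambda_{Q,\,v_{i_r}})$. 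By Remark \ref{rmk_order_of_local_group} this already gives $|G_{Q_{(J')}}(v_{i_r}^-)|=|\det\Lambda_{Q_{(J')},\,v_{i_r}^-}|=|\det\Lambda_{Q,\,v_{i_r}}|=|G_Q(v_{i_r})|$; to upgrade "same order" to "isomorphic," I would instead invoke the $\coker$ description of $G_E(v)$ recalled in diagram \eqref{eq_kerexp=coker}: since the two characteristic matrices differ by a unimodular change of basis of $\ZZ^{n+1}$ and an added trivial coordinate, $\coker\Lambda_{Q_{(J')},\,v_{i_r}^-}\cong\coker\big(\mathrm{diag}(1,\Lambda_{Q,\,v_{i_r}})\big)\cong\coker\Lambda_{Q,\,v_{i_r}}$, and then \eqref{eq_kerexp=coker} identifies these cokernels with $G_{Q_{(J')}}(v_{i_r}^-)$ and $G_Q(v_{i_r})$ respectively.

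The main obstacle I anticipate is purely combinatorial bookkeeping: correctly determining which facets of $Q_{(J')}$ pass through $v_{i_r}^-$, and in particular being careful about the fact that $v_{i_r}$ lies on the gluing facet $F_i$ (so $v_{i_r}^-$ and $v_{i_r}^+$ are identified and the vertex sits on \emph{both} $Q^+$ and $Q^-$), as reflected in the vertex-set description \eqref{eq_vertices_of_Q_tilde}. Once the facet list is pinned down the linear algebra is a one-line elementary column operation, and the identification of isotropy groups follows formally from \eqref{eq_kerexp=coker}.
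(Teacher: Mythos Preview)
Your argument is correct and proceeds along the same lines as the paper's: both identify the $n+1$ facets of $Q_{(J')}$ through $v_{i_r}^-$ as $Q^+$, $Q^-$, and the $n-1$ wedged facets coming from $F_{s_1},\dots,F_{s_{n-1}}$, write down the $(n+1)\times(n+1)$ characteristic matrix, and read off the isomorphism.

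Two minor remarks. First, you copied the paper's displayed formula $\lambda_{(J')}(F_{s1})=(-1,\lambda(F_s))^t$, but that display contains a typo: comparison with the matrix \eqref{eq_char_matrix_J'} (and with the general form \eqref{eq_lambda(J)_matrix}) shows the first coordinate is $0$ for $s\neq i$, so the top row of your matrix should read $(1,-1,0,\dots,0)$ rather than $(1,-1,-1,\dots,-1)$. The paper's own proof of the lemma uses the $0$'s; with this correction only one column operation (add column~1 to column~2) is needed, and your block-diagonal conclusion and cokernel computation go through unchanged. Second, rather than passing through unimodular column operations and $\coker$, the paper simply reads off the kernel of $\exp$ directly as
\[
\{(t_1,\dots,t_{n+1})\in T^{n+1}\mid t_1=t_2,\ (t_2,\dots,t_{n+1})\in G_Q(v_{i_r})\}\cong G_Q(v_{i_r});
\]
the two descriptions are equivalent via \eqref{eq_kerexp=coker}, exactly as you note.
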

\begin{proof}
Given a vertex $v_{i_r} \in V(F_i)$, 
assume that $v_{i_r}=F_i \cap F_{s_1} \cap \cdots \cap F_{s_{n-1}}$ 
for some $\{s_1, \ldots, s_{n-1}\} \subset \{1, \ldots, m\}$.
%and, for convenience, we put $F_i = F_{j_1}$. From the hypothesis $v\in F_i$,   
Then, we have 
\begin{equation}\label{eq_v_in_F_i_intersection_of_facets}
v_{i_r}^- = Q^+ \cap Q^- \cap \bigcap_{a=1}^{n-1} {F_{s_a}}_{(F_i\cap F_{s_a})},
\end{equation}
where ${F_{s_a}}_{(F_i\cap F_{s_a})}$ is the polytopal wedge construction, as in 
Subsection \ref{subsec_polytopal_wedge}, by regarding the facet $F_{s_a}$ 
as a simple polytope, and $F_i\cap F_{s_a}$ as a facet of $F_{s_a}$ for $a=1, \dots, n-1$. 

Restricting to the facets which meet at $v_{i_r}^-$, we get from \eqref{eq_char_matrix_J'}
\begin{align*}
{\Lambda_{(J')}}_{v_{i_r}^-}
&=\left[\begin{array}{c|c|c|c|c}
\lambda_{(J')}(Q^{+})^t & \lambda_{(J')}(Q^{-})^t & 
\lambda_{(J')}({F_{s_1}}_{(F_i\cap F_{s_1})})^t& 
\cdots &
\lambda_{(J')}({F_{s_{n-1}}}_{(F_i\cap F_{s_{n-1}})})^t
\end{array}\right]\\
&=\left[ \begin{array}{ccccc}
1 & -1 & 0 & \cdots &0 \\
0&&&&\\
\vdots &\lambda(F_{i})^t&\lambda(F_{s_1})^t&\cdots&\lambda(F_{s_{n-1}})^t\\
0 &&&& \end{array}\right],
\end{align*}
which induces an endomorphism 
$\exp ({\Lambda_{(J')}}_{v_{i_r}^-}) \colon T^{n+1} \to T^{n+1}$. 
According to the definition in Page \pageref{page_local_group} of 
Section \ref{sec_toric_orb_orb_lens}, we have 
\begin{align*}
G_{Q_{(J')}}(v_{i_r}^-) &= 
\ker \left( \exp ({\Lambda_{(J')}}_{v_{i_r}^-}) \colon T^{n+1} 
\twoheadrightarrow T^{n+1}\right) \\ 
&=\{ (t_1, \dots, t_{n+1}) \in T^{n+1} \mid t_1=t_2,~ (t_2, \dots, t_{n+1}) \in G_Q(v_{i_r})\}\\
&\cong G_Q(v_{i_r}). 
\end{align*}
\end{proof}

Next, we consider vertices away from $F_i$. 
\begin{lemma}\label{lem_loc_group_isom_for_v_pm}
Let  $v$ be a vertex in $V(Q) \setminus V(F_i)$. Then, 
the finite groups $G_{Q_{(J')}}(v^+)$  
and  $G_{Q_{(J')}}(v^-)$ are isomorphic to $G_Q(v).$ 
\end{lemma}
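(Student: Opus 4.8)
The plan is to mimic the proof of Lemma \ref{lem_loc_goup_isom_for_v_0}, the only difference being that a vertex $v \in V(Q) \setminus V(F_i)$ gives rise to \emph{two} vertices $v^+$ and $v^-$ in $Q_{(J')}$, and we must show both have local groups isomorphic to $G_Q(v)$. First I would fix such a $v$ and write $v = F_{s_1} \cap \cdots \cap F_{s_n}$ for the $n$ facets of $Q$ meeting at $v$ (none of which is $F_i$). The key combinatorial observation is that in $Q_{(J')} = (Q \times I)/_{\sim_{F_i}}$, the vertex $v^+ = v \times \{1\}$ is the intersection of the facet $Q^+$ together with the $n$ facets $(F_{s_a} \times I)/_{\sim_{F_i}}$ for $a = 1, \dots, n$, and similarly $v^- = v \times \{0\}$ is the intersection of $Q^-$ with those same $n$ ``side'' facets. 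So in each case exactly $n+1$ facets meet, as they must at a vertex of the $(n+1)$-dimensional polytope $Q_{(J')}$.

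Next I would assemble the relevant square matrices using the explicit form of $\lambda_{(J')}$ recorded in \eqref{eq_char_matrix_J'}: we have $\lambda_{(J')}(Q^+) = (1, 0, \dots, 0)^t$, $\lambda_{(J')}(Q^-) = (-1, \lambda(F_i))^t$, and $\lambda_{(J')}((F_{s_a} \times I)/_{\sim_{F_i}}) = \lambda_{(J')}(F_{s_a 1}) = (-1, \lambda(F_{s_a}))^t$ for each $a$ (since $s_a \ne i$). Ordering the columns with the $Q^\pm$ column first, the matrix ${\Lambda_{(J')}}_{v^+}$ has first column $(1, 0, \dots, 0)^t$ and remaining columns $(-1, \lambda(F_{s_a})^t)^t$; the matrix ${\Lambda_{(J')}}_{v^-}$ has first column $(-1, \lambda(F_i)^t)^t$ and the same remaining columns. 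In the $v^+$ case, elementary column operations (adding the first column to each of the others, which does not change the kernel of $\exp$ up to the relevant isomorphism — equivalently, conjugating by an element of $SL_{n+1}(\ZZ)$) reduce the matrix to block form $\mathrm{diag}(1, \Lambda_v)$ where $\Lambda_v = \Lambda_{Q,v}$, so $G_{Q_{(J')}}(v^+) \cong \ker(\exp \Lambda_v) = G_Q(v)$.

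For $v^-$ the argument is the same after one extra step: add the first column to each of the other columns to clear the first row to $(-1, 0, \dots, 0)$, then the matrix is block-triangular with blocks $(-1)$ and $\Lambda_v$; since $|\det| = |\det \Lambda_v|$ and, more precisely, the cokernel (equivalently $\ker \exp$, via the snake lemma diagram \eqref{eq_kerexp=coker}) is unchanged by unimodular column operations, we get $G_{Q_{(J')}}(v^-) \cong G_Q(v)$ as well. The only mild subtlety to watch is bookkeeping: one must confirm that the $n$ side-facets listed really are distinct from $Q^\pm$ and that no relabeling (the identification $w_{r1} \leftrightarrow w_r$ for $r \ne i$) causes a collision — but this is immediate from \eqref{eq_vertices_of_Q_tilde} and the description of the facets of $Q_{(J')}$ in Subsection \ref{subsec_polytopal_wedge}. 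I do not anticipate a genuine obstacle here; the content is entirely the translation of the polytopal wedge combinatorics into the explicit matrix \eqref{eq_char_matrix_J'}, exactly as in the preceding lemma, with the bookkeeping of the two vertices $v^\pm$ being the only thing that differs.
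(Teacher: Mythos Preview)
Your proposal is correct and follows the same route as the paper: write down the $(n+1)\times(n+1)$ matrix ${\Lambda_{(J')}}_{v^\pm}$, recognise its block-triangular shape with diagonal blocks $\pm 1$ and $\Lambda_v$, and read off $G_{Q_{(J')}}(v^\pm)\cong G_Q(v)$.

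One point worth flagging: you took $\lambda_{(J')}(F_{s_a 1})=(-1,\lambda(F_{s_a}))^t$ from the displayed list just after \eqref{eq_char_matrix_J'}, but that line is a typo in the paper. The matrix \eqref{eq_char_matrix_J'} itself, and the paper's own proof of this lemma, use $\lambda_{(J')}(F_{s_a 1})=(0,\lambda(F_{s_a}))^t$ for $s_a\neq i$; with the correct vectors the matrices ${\Lambda_{(J')}}_{v^+}$ and ${\Lambda_{(J')}}_{v^-}$ are already block-triangular, and the paper simply computes $\ker(\exp{\Lambda_{(J')}}_{v^-})=\{(t_1,\dots,t_{n+1})\mid t_1=1,\ (t_2,\dots,t_{n+1})\in G_Q(v)\}$ directly, with no column operations needed. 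Your unimodular column-operation step is a perfectly valid workaround (right-multiplication by an element of $SL_{n+1}(\ZZ)$, not conjugation, but the effect on the cokernel is the same), so the argument stands; just be aware that the extra step is an artefact of the typo rather than an intrinsic feature of the proof.
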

\begin{proof}
Suppose $v=F_{s_1}\cap \dots \cap F_{s_n}$ with $i\notin \{s_1, \dots, s_n\}$. 
Then, we have 
\begin{equation}\label{eq_vertex_in_Q(J)_not_in_F_i}
v^-=Q^-\cap (F_{s_1}\times I)\cap \cdots \cap (F_{s_n}\times I).
\end{equation}
The $\mathcal{R}$-characteristic function $\lambda_{(J')}$ yields the 
square matrix   
\begin{align*}
{\Lambda_{(J')}}_{v^-} &=
 \left[\begin{array}{c|c|c|c}
{\lambda_{(J')}}(Q^-)^t & 
{\lambda_{(J')}}(F_{s_1}\times I)^t & 
\cdots &
{\lambda_{(J')}}(F_{s_n}\times I)^t
\end{array}\right]\\
&= \left[\begin{array}{c|c|c|c}
-1& 0 & \cdots & 0 \\
\lambda(F_i)^t & \lambda(F_{s_1})^t & \cdots &\lambda(F_{s_n})^t
\end{array}\right]\\
&= \left[ \begin{array}{c|ccc}
-1 & 0 & \cdots & 0\\ \hline
&&& \\
\lambda(F_i)^t & & \Lambda_v &\\
&&&
\end{array}\right].
\end{align*}
Hence the kernel of the endomorphism 
$\exp({\Lambda_{(J')}}_{v^-})\colon T^{n+1} \to T^{n+1}$ 
of tori induced by 
${\Lambda_{(J')}}_{v^-}$ is 
$$\{ (t_1, \dots, t_{n+1}) \in T^{n+1} \mid t_1=1,~ (t_2, \dots, t_{n+1}) \in G_Q(v)\}$$
which is isomorphic to $G_Q(v)$. 
Similarly one can show that $G_{Q_{(J')}}(v^+)$ is also isomorphic to 
$G_Q(v)$. 
\end{proof}

The observations above allow us now to adapt the 
hypothesis of Theorem \ref{thm_no_p-torsion} to the 
$\mathcal{R}$-characteristic pair $(Q_{(J)}, \lambda_{(J)})$. 

\begin{lemma}\label{lem_ret_simj}
Given an $\mathcal{R}$-characteristic pair $(Q, \lambda)$ and a prime number $p$, 
suppose that  there exist a retraction sequence  $\{(B_r, E_r, b_r)\}_{r=1}^{\ell}$
of $Q$ satisfying the condition $\gcd\{p, |G_{E_r}(b_r)|\}=1$ for $r=1, \ldots, \ell$.  Then 
for an arbitrary $J=(j_1, \dots, j_m)\in \NN^m$, there exists a retraction sequence 
$\{(B^{\prime}_s, E^{\prime}_s, b_s^{\prime})\}_{s=1}^{\ell'}$ for $Q_{(J)}$
which satisfies  $\gcd\{p, |G_{E^{\prime}_s}(b_s^{\prime})|\}=1$ for $s=1, \ldots, \ell'$, 
where $\ell':=|V(Q_{(J)})|$. 
\end{lemma}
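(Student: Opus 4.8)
The plan is to reduce the general case to the single polytopal wedge $J' = (1,\dots,1,2,1,\dots,1)$ and then build the desired retraction sequence for $Q_{(J')}$ by hand from the given one for $Q$. By Proposition \ref{prop_(2,1,...1)(2,1,...,1)=(3,1,...,1)} and the iterative description of the polytopal wedge construction in Subsection \ref{subsec_polytopal_wedge}, any $Q_{(J)}$ is obtained from $Q$ by finitely many polytopal wedges, so an induction on $d(J) - m$ reduces everything to showing: if $Q$ admits a retraction sequence with $\gcd\{p, |G_{E_r}(b_r)|\} = 1$ for all $r$, then so does $Q_{(F_i)}$ for any facet $F_i$. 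The point is that the local group orders appearing along a suitable retraction sequence of $Q_{(F_i)}$ are, up to the identifications of Lemmas \ref{lem_loc_goup_isom_for_v_0} and \ref{lem_loc_group_isom_for_v_pm}, exactly those appearing along the original sequence of $Q$, so coprimality to $p$ is inherited.

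The key construction is to promote a retraction sequence $\{(B_r, E_r, b_r)\}_{r=1}^{\ell}$ of $Q$ to one of $Q_{(F_i)}$. Recall $Q_{(F_i)} = (Q \times I)/{\sim_{F_i}}$, with facets $Q^+$, $Q^-$, and $(F \times I)/{\sim_{F_i}}$ for $F \neq F_i$; its vertices are $v^+, v^-$ for $v \notin V(F_i)$ together with $v^-$ for $v \in V(F_i)$ (identification \eqref{eq_vertices_of_Q_tilde}). The plan is to process the vertices in two passes. First, traverse the ``$+$ side'': following the order $b_1, b_2, \dots$ in which the original sequence retracts $Q$, successively delete $b_r^+$ from $Q_{(F_i)}$; at each stage the maximal face of the subcomplex containing $b_r^+$ is the ``copy'' $E_r^+$ of $E_r$ sitting inside $Q^+ \cong Q$, and by the computation in the proof of Lemma \ref{lem_loc_group_isom_for_v_pm} (and its evident relative version, $G_{E_r^+}(b_r^+) \cong G_{E_r}(b_r)$), the local group order is coprime to $p$. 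After this first pass, the facet $Q^+$ has been entirely removed and what remains is $Q^- \cong Q$ together with the collar faces still attached along $F_i$; in fact the remaining subcomplex is combinatorially $Q$ again (realized as $Q^-$), so the second pass simply replays the original retraction sequence on $Q^-$, using Lemmas \ref{lem_loc_goup_isom_for_v_0} and \ref{lem_loc_group_isom_for_v_pm} to identify the local groups $G_{E_r^-}(b_r^-) \cong G_{E_r}(b_r)$. This yields a retraction sequence of $Q_{(F_i)}$ of length $|V(Q_{(F_i)})|$, all of whose local group orders are coprime to $p$.

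The main obstacle is making the first pass precise: I need to verify that deleting the vertices $b_r^+$ in the given order really is a valid retraction sequence of $Q_{(F_i)}$ — i.e., that at each step $b_r^+$ is a free vertex of the current subcomplex, with a well-defined maximal face $E_r^+$, and that after deleting it the recursion $B' \mapsto \bigcup\{E : b_r^+ \notin E\}$ produces the next subcomplex in the list. This requires understanding how the face poset of $Q_{(F_i)}$ restricts to the ``upper'' faces $E^+$ and how these interleave with the collar faces $(F \times I)/{\sim_{F_i}}$; the freeness of $b_r^+$ should follow because locally near $Q^+$ the polytope $Q_{(F_i)}$ looks like $Q \times [0,\epsilon)$, so free vertices of $B_r$ in $Q$ correspond to free vertices of the appropriate subcomplex of $Q_{(F_i)}$. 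A clean way to organize this is to choose the retraction sequence of $Q$ to come from a generic linear functional $f$ (as in \cite[Proposition 2.3]{BSS}) and then use the linear functional $\tilde f(x,t) = f(x) + \epsilon t$ on $Q \times I$ for small $\epsilon > 0$ to induce the retraction sequence of $Q_{(F_i)}$ directly; one then checks that $\tilde f$ orders the vertices of $Q_{(F_i)}$ so that all the $v^+$ come after the matching $v^-$ only on $F_i$ and otherwise interleave harmlessly, and reads off the local groups from Lemmas \ref{lem_loc_goup_isom_for_v_0}–\ref{lem_loc_group_isom_for_v_pm}. The remaining bookkeeping — counting that the length is exactly $|V(Q_{(J)})|$ and that the induction constant $d(J) - m$ decreases — is routine.
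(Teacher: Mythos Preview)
Your two-pass strategy has a genuine gap that is not just missing bookkeeping. The claim that the maximal face containing $b_r^+$ at each stage of the first pass is the copy $E_r^+ \subset Q^+$ is wrong from the outset: at the very first step the unique maximal face of $Q_{(F_i)}$ containing any vertex is $Q_{(F_i)}$ itself, and at later steps the maximal face is typically $E_r \times I$ or $(E_r)_{(E_r\cap F_i)}$, never a proper face of $Q^+$. More importantly, the approach can violate the $\gcd$ condition. If $b_1 \in V(F_i)$ (so $b_1^+$ does not exist), your first pass begins with $b_{r_0}^+$ for the least $r_0$ with $b_{r_0}\notin V(F_i)$, and the maximal face at that step is all of $Q_{(J')}$, giving local group $G_{Q_{(J')}}(b_{r_0}^+)\cong G_Q(b_{r_0})$ by Lemma~\ref{lem_loc_group_isom_for_v_pm}. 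The hypothesis only controls $|G_{E_{r_0}}(b_{r_0})|$, which by Proposition~\ref{prop_sub_local_group} merely \emph{divides} $|G_Q(b_{r_0})|$. A pentagon with a single order-$p$ vertex at $v_3$, the retraction $v_1,\dots,v_5$ of Example~\ref{ex_n-gon} (so only $|G_Q(v_1)|$ is constrained, and all the $|G_{F_j}(v_j)|$ are trivially~$1$), and $F_i$ the edge through $v_1,v_2$, gives a concrete counterexample: your first pass opens with $v_3^+$ and immediately picks up a factor of $p$.

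The paper's proof avoids this by interleaving rather than separating into two passes: it processes $b_r^+$ immediately followed by $b_r^-$ (omitting $b_r^+$ when $b_r\in V(F_i)$), so that by the time $b_r^{\pm}$ is reached all of $b_1^{\pm},\dots,b_{r-1}^{\pm}$ have been removed and the maximal face genuinely is $E_r\times I$ or $(E_r)_{(E_r\cap F_i)}$, with local group controlled by $G_{E_r}(b_r)$. Your linear-functional heuristic $\tilde f(x,t)=f(x)+\epsilon t$ in fact produces exactly this interleaved order, not the two-pass one, so that final remark is pointing in the right direction --- but note also that the given retraction of $Q$ need not come from a linear functional, so one still has to argue combinatorially as the paper does.
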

\begin{proof}
It is enough to consider the case when $J=J'=(1,\dots, 1,2,1,\dots, 1)$ 
from the discussion in Section \ref{sec_J-const} and the opening 
remark of Section \ref{sec_homology_of_X_{(J)}}. 
We assume the entry $2$ appears in $i$-th coordinate of $J$ and let 
$b_{\beta_1} \dots, b_{\beta_k}$ be the vertices of the facet $F_i$ of $Q$, 
where $\beta_1 < \cdots < \beta_k$. 

Given a prime number $p$ and a retraction sequence 
$\{(B_r, E_r, b_r)\}_{r=1}^{\ell}$ for $Q$ such that 
$\gcd\{p, |G_{E_r}(b_r)|\}=1$ for $r=1, \ldots, \ell$, 
we now construct a retraction sequence 
$\{(B^{\prime}_s, E^{\prime}_s, b_s^{\prime})\}_{s=1}^{2\ell-k}$
for $Q_{(J')}$ satisfying the hypothesis.
To accomplish this, consider the following sequence of vertices of $Q_{(J')}$:
\begin{align}\label{eq_Q(J)_seq_of_free_vertices}
\begin{split}
b_1^+ \to b_1^-\to  \cdots \to b_{\beta_1-1}^+\to  b_{\beta_1-1}^-\to  b_{\beta_1}^-\to 
b_{\beta_1+1}^+\to  b_{\beta_1+1}^-\to\cdots\\
\cdots \to  b_{\beta_k-1}^+\to  b_{\beta_k-1}^-\to  
b_{\beta_k}^{-}\to  b_{\beta_k+1}^+\to  b_{\beta_k+1}^-\to \cdots\to b_\ell^+\to  b_\ell^-. 
\end{split}
\end{align}
The sequence \eqref{eq_Q(J)_seq_of_free_vertices} 
begins with $b_1^-$ if $b_1$ is a vertex of $F_i$, i.e., $\beta_1=1$.
Now, we construct a retraction sequence satisfying the hypothesis 
using the sequence \eqref{eq_Q(J)_seq_of_free_vertices} above. 

\begin{enumerate}
\item[(Case 1)]
We first assume that $b_1$ is a vertex of $F_i$. Then, we take 
$b_1'=b_1^-$ as in \eqref{eq_Q(J)_seq_of_free_vertices} and set  
$(B_1', E_1', b_1')= (Q_{(J')}, Q_{(J')}, b_1^-)$. Next, 
the choice of $b_{1}^{-}$ as a free vertex of $B_1'$ gives
\begin{align*}
B_2'=\bigg(\bigcup_{E\cap F_i =\emptyset} E\times I\bigg) \cup 
\bigg( \bigcup_{\substack{E\cap F_i \neq \emptyset \\ b_1\notin E}} E_{(E\cap F_i)} \bigg), 
\end{align*}
where $E$ is a face of $Q$, and $E_{(E\cap F_i)}$ is the polytopal wedge of $E$ by 
considering $E$ as a simple polytope and $E\cap F_i$
as a facet of $E$. In general, if a face $E$ of a simple 
polytope $Q$ intersects a facet $F$ of $Q$, then $E$ is a face 
of $F_i$ or $E\cap F_i$ is a facet of $E$. 
Observe that the face structure of $B_2'$ 
is naturally inherited from the face structure of $Q_{(J')}$. 
In particular, neither $Q^+$ nor $Q^-$ is a face of $B_2'$. 

Next, we consider the following two possibilities: 
(i) $b_2\in V(F_i)\setminus \{b_1\}$, i.e., $\beta_2=2$, and (ii) $b_2\notin V(F_i)$.
If $b_2\in V(F_i)\setminus \{b_1\}$, we set $b_2'=b_2^-$ and 
$E_2'={E_2}_{(E_2\cap F_i)}$. 
If $b_2\notin V(F_i)$, we set $b_2' =b_2^+$ and $E_2'=E_2\times I$. 
Then, $b_2'$ has a neighborhood 
homeomorphic to $\RR^{\dim E_2+1}_{\geq 0}$ in $E_2'$, 
because $b_2$ has the appropriate neighborhood in $E_2$. 
Hence, we can define the second term $(B_2', E_2', b_2')$. 
The first two retraction sequences in Figure \ref{fig_ret_wedge_5-gon}
illustrate this case when $Q$ is a pentagon. 

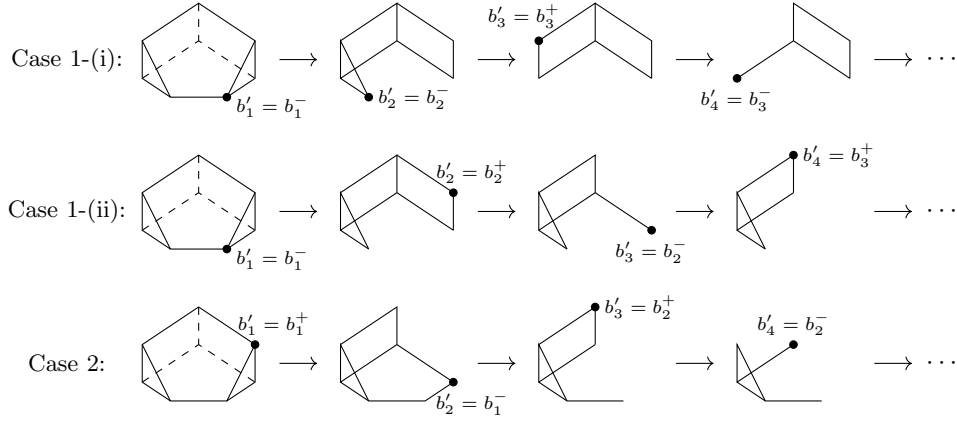
\begin{figure}
\begin{tikzpicture}[scale=0.25]

\begin{scope}[yshift=-460]
\node at (-4, 3) {\small{Case 2:}};
\draw[fill] (6,4) circle (0.2);
\node[above] at (7,4) {\scriptsize$b_1'=b_1^+$};
\draw (1.5,1)--(0,4)--(3,6)--(6,4)--(4.5,1)--cycle;
\draw (1.5,1)--(0,2)--(0,4);
\draw (4.5,1)--(6,2)--(6,4);
\draw[dashed] (0,2)--(3,4)--(6,2);
\draw[dashed] (3,6)--(3,4);
\draw[->] (7.3,3)--(9.3,3);

\begin{scope}[xshift=300]
\draw[fill] (6,2) circle (0.2);
\node[below] at (7,2) {\scriptsize$b_2'=b_1^-$};
\draw (1.5,1)--(0,2)--(3,4)--(6,2)--(4.5,1)--cycle;
\draw (3,4)--(3,6)--(0,4);
\draw (0,2)--(0,4)--(1.5,1);
\draw[->] (7.3,3)--(9.3,3);
\end{scope}

\begin{scope}[xshift=600]
\draw[fill] (3,6) circle (0.2);
\node[right] at (3,6) {\scriptsize$b_3'=b_2^+$};
\draw (4.5,1)--(1.5,1)--(0,2)--(3,4);
\draw (3,4)--(3,6)--(0,4);
\draw (0,2)--(0,4)--(1.5,1);
\draw[->] (7.3,3)--(9.3,3);
\end{scope}

\begin{scope}[xshift=900]
\draw[fill] (3,4) circle (0.2);
\node[above] at (3,4) {\scriptsize$b_4'=b_2^-$};
\draw (4.5,1)--(1.5,1)--(0,2)--(3,4);
\draw (0,2)--(0,4)--(1.5,1);
\draw[->] (7.3,3)--(9.3,3);
\node at (11,3) {$\cdots$};
\end{scope}

\end{scope}

\begin{scope}[yshift=-230]
\node at (-4, 3) {\small{Case 1-(ii):}};
\draw[fill] (4.5,1) circle (0.2);
\node[right] at (4.5,0.5) {\scriptsize$b_1'=b_1^-$};
\draw (1.5,1)--(0,4)--(3,6)--(6,4)--(4.5,1)--cycle;
\draw (1.5,1)--(0,2)--(0,4);
\draw (4.5,1)--(6,2)--(6,4);
\draw[dashed] (0,2)--(3,4)--(6,2);
\draw[dashed] (3,6)--(3,4);
\draw[->] (7.3,3)--(9.3,3);

\begin{scope}[xshift=300]
\draw[fill] (6,4) circle (0.2);
\node[above] at (7,4) {\scriptsize$b_2'=b_2^+$};
\draw (1.5,1)--(0,2)--(3,4)--(6,2);
\draw (3,4)--(3,6)--(0,4);
\draw (0,2)--(0,4)--(1.5,1);
\draw (6,2)--(6,4)--(3,6);
\draw[->] (7.3,3)--(9.3,3);
\end{scope}

\begin{scope}[xshift=600]
\draw[fill] (6,2) circle (0.2);
\node[below] at (6,2) {\scriptsize$b_3'=b_2^-$};
\draw (1.5,1)--(0,2)--(3,4)--(6,2);
\draw (3,4)--(3,6)--(0,4);
\draw (0,2)--(0,4)--(1.5,1);
\draw[->] (7.3,3)--(9.3,3);
\end{scope}

\begin{scope}[xshift=900]
\draw[fill] (3,6) circle (0.2);
\node[right] at (3,6) {\scriptsize$b_4'=b_3^+$};
\draw (1.5,1)--(0,2)--(3,4);
\draw (3,4)--(3,6)--(0,4);
\draw (0,2)--(0,4)--(1.5,1);
\draw[->] (7.3,3)--(9.3,3);
\node at (11,3) {$\cdots$};
\end{scope}
\end{scope}

\draw[fill] (4.5,1) circle (0.2);
\node at (-4, 3) {\small{Case 1-(i):}};
\node[right] at (4.5,0.5) {\scriptsize$b_1'=b_1^-$};
\draw (1.5,1)--(0,4)--(3,6)--(6,4)--(4.5,1)--cycle;
\draw (1.5,1)--(0,2)--(0,4);
\draw (4.5,1)--(6,2)--(6,4);
\draw[dashed] (0,2)--(3,4)--(6,2);
\draw[dashed] (3,6)--(3,4);
\draw[->] (7.3,3)--(9.3,3);

\begin{scope}[xshift=300]
\draw[fill] (1.5,1) circle (0.2);
\node[right] at (1.5,1) {\scriptsize$b_2'=b_2^-$};
\draw (1.5,1)--(0,2)--(3,4)--(6,2);
\draw (3,4)--(3,6)--(0,4);
\draw (0,2)--(0,4)--(1.5,1);
\draw (6,2)--(6,4)--(3,6);
\draw[->] (7.3,3)--(9.3,3);
\end{scope}

\begin{scope}[xshift=600]
\draw[fill] (0,4) circle (0.2);
\node[above] at (-.8,4.2) {\scriptsize$b_3'=b_3^+$};
\draw (0,2)--(3,4)--(6,2);
\draw (3,4)--(3,6)--(0,4);
\draw (0,2)--(0,4);
\draw (6,2)--(6,4)--(3,6);
\draw[->] (7.3,3)--(9.3,3);
\end{scope}

\begin{scope}[xshift=900]
\draw[fill] (0,2) circle (0.2);
\node[below] at (0,2) {\scriptsize$b_4'=b_3^-$};
\draw (0,2)--(3,4)--(6,2);
\draw (6,2)--(6,4)--(3,6)--(3,4);
\draw[->] (7.3,3)--(9.3,3);
\node at (11,3) {$\cdots$};
\end{scope}

\end{tikzpicture}
\caption{Three retraction sequences of the wedge of $5$-gon.}
\label{fig_ret_wedge_5-gon}
\end{figure}

\item[(Case 2)] Here we assume that $b_1\notin V(F_i)$. 
Then, we take $b_1'=b_1^+$ by \eqref{eq_Q(J)_seq_of_free_vertices}, 
which gives 
\begin{equation*}%\label{eq_B_2'_case2}
B_2'=Q^- \cup \bigg(\bigcup_{E\cap F_i =\emptyset} E\times I\bigg) \cup 
\bigg(\bigcup_{\substack{E\cap F_i \neq \emptyset \\ b_1\notin E}} E_{(E\cap F_i)}\bigg),
\end{equation*}
where $E$ is a face of $Q$. 
Next, we take $b_2'=b_1^-$ and $E_2'=Q^-$ which is the unique maximal 
face of $Q_{(J')}$ containing $b_2'$. 
Now, $B_3'$ is naturally defined by deleting faces of $Q^-$ which contains 
$b_2'$ from $B_2'$. Observe that neither $Q^+$ nor $Q^-$ is a face
of $B_3'$, because a vertex in each of $Q^+$ and $Q^-$ has been removed.
See the third retraction sequence in Figure \ref{fig_ret_wedge_5-gon} for an 
example. 
\end{enumerate}
In both (Case1) and (Case2), one can see that 
$$\gcd\{p, |G_{E_1'}(b_1')|\}=\gcd\{p, |G_{E_2'}(b_2')|\}=1$$
by Proposition \ref{prop_sub_local_group},  
Lemma \ref{lem_loc_goup_isom_for_v_0} and
Lemma \ref{lem_loc_group_isom_for_v_pm}.

Finally, the remaining terms of the desired retraction sequence 
$\{(B^{\prime}_s, E^{\prime}_s, b_s^{\prime})\}_{s=1}^{2\ell-k}$ can be obtained 
by setting the vertices in the sequence \eqref{eq_Q(J)_seq_of_free_vertices} 
as the desired sequence of free vertices $\{b_s'\}_{s=1}^{2\ell-k}$. 
Indeed, it is enough to verify the following claims:
\begin{enumerate}
\item[(a)] Assume that $b_r\notin V(F_i)$, $b_s'=b_r^+$ and $b_{s+1}'=b_r^-$. 
Then, $b_s'$ and $b_{s+1}'$  are free vertices in $B_s'$ and $B_{s+1}'$, 
respectively. In particular, $E_s'$ and $E_{s+1}'$ are determined by 
$E_r\times I$ and $E_r\times \{0\}$, respectively. 
\item[(b)] Assume that $b_r\in V(F_i)$ and $b_s'=b_r^-$. 
Then, $b_s'$ is a free vertex in $B_s'$ with a unique maximal face 
$E_s'={E_r}_{(E_r\cap F_i)}$.
\item[(c)] $\gcd\{p, |G_{E_s'}(b_s')|\}=1$ for each $s=1, \dots, \ell'$. 
\end{enumerate}
The claims follow by repeating the arguments in (Case 1) and (Case 2) above. 
\end{proof}

Now, Theorem \ref{thm_no_p-torsion}, \ref{thm_no-torsion} and Lemma \ref{lem_ret_simj} 
concludes the following two theorems. 

\begin{theorem}
Let $X:=X(Q, \lambda)$ be a toric orbifold satisfying the assumption of 
Theorem \ref{thm_no_p-torsion}. Then
the cohomology $H^\ast(X_{(J)};\ZZ)$ has no $p$-torsion and 
$H_{odd}(X_{(J)}; \ZZ_p)$ is trivial for arbitrary $J=(j_1, \dots, j_m)\in \NN^m$.
\end{theorem}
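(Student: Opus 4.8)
The plan is to reduce the statement directly to the machinery already assembled in this section. Since $X(Q,\lambda)$ satisfies the assumption of Theorem~\ref{thm_no_p-torsion}, there is a fixed prime $p$ and a retraction sequence $\{(B_r,E_r,b_r)\}_{r=1}^{\ell}$ for $Q$ with $\gcd\{p,|G_{E_r}(b_r)|\}=1$ for all $r$. The point is that Lemma~\ref{lem_ret_simj} takes exactly this data and produces, for arbitrary $J=(j_1,\dots,j_m)\in\NN^m$, a retraction sequence $\{(B_s',E_s',b_s')\}_{s=1}^{\ell'}$ for $Q_{(J)}$ satisfying $\gcd\{p,|G_{E_s'}(b_s')|\}=1$ for all $s$. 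So the hypothesis of Theorem~\ref{thm_no_p-torsion} is met by the pair $(Q_{(J)},\lambda_{(J)})$, which by Lemma~\ref{lem_lambda(J)_satisfies_the_condition} is a genuine $\cR$-characteristic pair defining the toric orbifold $X_{(J)}=X(Q_{(J)},\lambda_{(J)})$. Applying Theorem~\ref{thm_no_p-torsion} to $X_{(J)}$ then gives that $H_\ast(X_{(J)};\ZZ)$ has no $p$-torsion and $H_{odd}(X_{(J)};\ZZ_p)$ is trivial.

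The only subtlety is the quantifier order. Theorem~\ref{thm_no_p-torsion} concerns a single prime $p$: its hypothesis is that for \emph{that} $p$ there exists a suitable retraction sequence, and this is precisely the form in which Lemma~\ref{lem_ret_simj} is stated. So the argument is carried out one prime at a time, and no uniform choice of retraction sequence is needed. First I would invoke the hypothesis on $X$ to fix the retraction sequence for $Q$ adapted to $p$; then I would feed it into Lemma~\ref{lem_ret_simj} to obtain the retraction sequence for $Q_{(J)}$ adapted to $p$; then I would cite Theorem~\ref{thm_no_p-torsion} for $X_{(J)}$. Since $p$ was arbitrary, the conclusion holds for every prime, which is exactly the assertion of the theorem.

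There is essentially no obstacle once Lemma~\ref{lem_ret_simj} is in hand; the entire combinatorial difficulty — constructing the retraction sequence for the wedge $Q_{(J')}$ from one for $Q$ and controlling the local groups $G_{Q_{(J')}}(v^{\pm})$, $G_{Q_{(J')}}(v_{i_r}^-)$ via Lemmas~\ref{lem_loc_goup_isom_for_v_0} and~\ref{lem_loc_group_isom_for_v_pm} together with Proposition~\ref{prop_sub_local_group} — has already been discharged there, and the reduction from general $J$ to $J'=(1,\dots,1,2,1,\dots,1)$ via iterated polytopal wedges is also part of that lemma. The proof of the present theorem is therefore a one-line deduction: it is the combination of Theorem~\ref{thm_no_p-torsion} and Lemma~\ref{lem_ret_simj}, applied prime by prime.
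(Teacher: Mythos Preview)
Your proposal is correct and matches the paper's own proof, which is literally the one-line remark preceding the theorem: ``Theorem~\ref{thm_no_p-torsion}, \ref{thm_no-torsion} and Lemma~\ref{lem_ret_simj} concludes the following two theorems.'' The only slip is your closing sentence: the prime $p$ is not arbitrary here but is the fixed prime from the hypothesis of Theorem~\ref{thm_no_p-torsion}, and the ``arbitrary'' in the statement refers to $J$; the prime-by-prime argument you describe is really what proves the \emph{next} theorem.
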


\begin{theorem}\label{thm_X_satisfies_assump_then_X(J)_eq_formal}
Let $X:=X(Q, \lambda)$ be a toric orbifold satisfying the assumption of 
Theorem \ref{thm_no-torsion}. 
Then, $H^\ast(X_{(J)};\ZZ)$ is torsion free and $H_{odd}(X_{(J)}; \ZZ)$ vanishes 
for arbitrary $J=(j_1, \dots,  j_m)\in \NN^m$.
\end{theorem}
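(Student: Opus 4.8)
The plan is to verify that the pair $(Q_{(J)}, \lambda_{(J)})$ satisfies the hypothesis of Theorem~\ref{thm_no-torsion} and then to invoke that theorem directly; essentially all of the substantive work has already been carried out in the lemmas above. First, Lemma~\ref{lem_lambda(J)_satisfies_the_condition} guarantees that $\lambda_{(J)}$ is a genuine $\cR$-characteristic function, so $X_{(J)} = X(Q_{(J)}, \lambda_{(J)})$ is a toric orbifold to which Theorem~\ref{thm_no-torsion} may be applied, and Lemma~\ref{lem_ret_simj} provides precisely the mechanism for transferring a retraction sequence satisfying the relevant gcd condition from $Q$ to $Q_{(J)}$.

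Concretely, I would fix an arbitrary prime $p$. By the standing hypothesis, $X = X(Q,\lambda)$ satisfies the assumption of Theorem~\ref{thm_no-torsion}, so there is a retraction sequence $\{(B_r, E_r, b_r)\}_{r=1}^{\ell}$ of $Q$ with $\gcd\{p, |G_{E_r}(b_r)|\} = 1$ for every $r$, the local groups being computed from $\lambda$. Feeding this sequence into Lemma~\ref{lem_ret_simj} produces a retraction sequence $\{(B'_s, E'_s, b'_s)\}_{s=1}^{\ell'}$ of $Q_{(J)}$, with $\ell' = |V(Q_{(J)})|$, satisfying $\gcd\{p, |G_{E'_s}(b'_s)|\} = 1$ for every $s$, where now the local groups are computed from $\lambda_{(J)}$. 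Since $p$ was arbitrary, $(Q_{(J)}, \lambda_{(J)})$ meets the hypothesis of Theorem~\ref{thm_no-torsion}, and that theorem yields that $H_\ast(X_{(J)};\ZZ)$ is torsion free and $H_{\mathrm{odd}}(X_{(J)};\ZZ) = 0$. The universal coefficient theorem then converts this into the stated assertion for $H^\ast(X_{(J)};\ZZ)$. The same argument, with Theorem~\ref{thm_no_p-torsion} in place of Theorem~\ref{thm_no-torsion}, proves the preceding theorem for a single fixed prime.

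The main obstacle is therefore not in this proof but in Lemma~\ref{lem_ret_simj}, which is already established: given a retraction sequence of $Q$, one must build one of the wedge $Q_{(J')}$ by interleaving the two copies $b^{+}, b^{-}$ of each vertex $b\notin V(F_i)$ together with the single copy $b^{-}$ of each vertex $b\in V(F_i)$, in such a way that at every stage the chosen vertex is still a free vertex of the current polytopal subcomplex (the Case~1 versus Case~2 analysis), and one must then check that the isotropy at the new free vertices does not worsen. This last point rests on Lemmas~\ref{lem_loc_goup_isom_for_v_0} and~\ref{lem_loc_group_isom_for_v_pm}, which identify $G_{Q_{(J')}}(b^{\pm})$ with $G_Q(b)$, together with Proposition~\ref{prop_sub_local_group} to pass from a polytope to its faces. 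Since an arbitrary $Q_{(J)}$ is obtained from $Q$ by a finite iteration of polytopal wedge constructions (Subsection~\ref{subsec_polytopal_wedge}), Lemma~\ref{lem_ret_simj} already covers general $J\in\NN^m$, and no further induction is required at the level of this theorem.
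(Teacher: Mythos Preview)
Your proposal is correct and matches the paper's approach exactly: the paper offers no further argument beyond the single sentence that Theorems~\ref{thm_no_p-torsion}, \ref{thm_no-torsion} and Lemma~\ref{lem_ret_simj} together imply the two stated theorems. Your write-up simply unpacks this implication (and adds the routine universal coefficient passage from homology to cohomology), so nothing is missing.
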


\section{Application to the cohomology ring}\label{sec_cohom_ring}
Motivated by the results of \cite{BFR},  a notion of \emph{weighted Stanley--Reisner ring}
was introduced  and used in \cite{BSS} to explicitly
compute the singular cohomology ring with integer coefficients of 
spaces identified as integrally equivariantly formal projective toric orbifolds. 
In this section, we briefly introduce their results and 
study the integral cohomology ring of $X_{(J)}$. 

\subsection{Cohomology ring of $X$}
In this subsection, we briefly summarize the theory of weighted Stanley--Reisner 
ring of a 
\emph{simple lattice polytope}\footnote{A simple polytope in $\RR^n$ whose vertices 
belong to the lattice $\ZZ^n \subset \RR^n$. } $Q$ 
which defines a projective normal toric variety,  see \cite[Section 5]{BSS}. 
The original definition of weighted 
Stanley--Reisner ring is based on a polytopal fan, but we translate the notation 
of \cite{BSS} to a simple lattice polytope which defines a polytopal fan. 

Let $\mathcal{F}(Q)=\{F_1, \dots, F_m\}$ be the set of facets of $Q$.
Since $Q$ is a lattice polytope, we may choose a primitive inward normal vector 
$\lambda_i$ of each facet $F_i$. Moreover, if $F_{i_1}\cap \dots \cap F_{i_k}=\emptyset$, 
then $\lambda_{i_1}, \dots, \lambda_{i_k}$ is linearly independent, because $Q$
is simple. Hence, the set of primitive inward normal vectors forms 
an $\mathcal{R}$-characteristic pair, say $(Q, \lambda)$. 

Next, for each vertex $v=F_{s_1} \cap\dots \cap F_{s_n} \in V(Q)$, 
we recall from Section \ref{sec_toric_orb_orb_lens}, (item (5) in page \pageref{item_(5)}), 
the $(n\times n)$-matrix associated to $v$, 
that is
$$\Lambda_{v} := 
\left[ \begin{array}{c|c|c} \lambda(F_{s_1})^t & \cdots & \lambda(F_{s_n})^t\end{array} \right].$$
 
For each vertex $v=F_{s_1}\cap \dots \cap F_{s_n}$, we define a vector 
$$z^v:=(z^v_1, \dots, z^v_m) \in \bigoplus_m \QQ[u_1, \dots, u_n],$$
by the following rule:
\begin{enumerate}\label{int_vectors}
\item[(i)] $z^v_j=0$ if $j\notin \{s_1, \dots, s_n\}$,
%\item $(z^v_{i_1}, \dots, z^v_{i_n})=(u_1, \dots, u_n) \cdot \left((\Lambda_v)^T\right)^{-1}$, 
\item[(ii)] $\begin{bmatrix}
z^v_{s_1}\\ \vdots\\ z^v_{s_n}
\end{bmatrix} = \Lambda_{v}^{-1} \cdot \begin{bmatrix}u_1 \\ \vdots \\ u_n\end{bmatrix}$
\end{enumerate}
where the operation on the right hand side is the usual matrix multiplication. 
Next, we define a subset of $\ZZ[x_1, \dots, x_m]$ as follows:
$${\rm Int}[Q, \lambda]:=\{ f(x_1, \dots, x_m)  \mid f(z^v)\in \ZZ[u_1, \dots, u_n],
~\text{ for all } v\in V(Q)\}.$$

\begin{remark}\label{rmk_u_i-variables}
The variables $u_1, \dots, u_n$ stand for the basis of $H^2(BT^n;\ZZ)$, 
where $T^n$ is the $n$-dimensional torus acting on $X(Q, \lambda)$. 
Indeed, one may regard polynomial rings $\QQ[u_1, \dots, u_n]$ and  $\ZZ[u_1, \dots, u_n]$
as $H^\ast(BT^n;\QQ)$ and $H^\ast(BT^n;\ZZ)$, respectively. 
We refer to \cite[Section 5.2]{BSS}. 
\end{remark}

The next proposition highlights critical properties of ${\rm Int}[Q,\lambda]$.
\begin{proposition}
\begin{enumerate}
\item\label{prop_subring} The subset ${\rm Int}[Q, \lambda]$ is a subring of 
$\ZZ[x_1, \dots, x_m]$.
\item\label{prop_SR-ideal} The \emph{Stanley--Reisner ideal} 
$\mathcal{I}_Q :=\big\langle \prod_{j=1}^k x_{i_j} \mid F_{i_1} \cap \dots \cap F_{i_k} 
= \emptyset \big\rangle$
of the ring $\ZZ[x_1, \dots, x_m]$ for $Q$ is again an ideal of ${\rm Int}[Q, \lambda]$. 
\end{enumerate}
\end{proposition}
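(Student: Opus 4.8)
The plan is to verify each of the two assertions directly from the definitions, exploiting the fact that membership in $\mathrm{Int}[Q,\lambda]$ is detected vertex-by-vertex via the substitution $x \mapsto z^v$. For part \eqref{prop_subring}, I would first observe that for each vertex $v \in V(Q)$, the assignment $f(x_1,\dots,x_m) \mapsto f(z^v)$ is a ring homomorphism $\mathrm{ev}_v \colon \ZZ[x_1,\dots,x_m] \to \QQ[u_1,\dots,u_n]$, since it is just evaluation at the tuple $z^v$ whose entries lie in $\QQ[u_1,\dots,u_n]$. Then $\mathrm{Int}[Q,\lambda] = \bigcap_{v \in V(Q)} \mathrm{ev}_v^{-1}\big(\ZZ[u_1,\dots,u_n]\big)$. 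Since $\ZZ[u_1,\dots,u_n]$ is a subring of $\QQ[u_1,\dots,u_n]$, its preimage under each ring homomorphism $\mathrm{ev}_v$ is a subring of $\ZZ[x_1,\dots,x_m]$, and a finite (indeed arbitrary) intersection of subrings is a subring. This disposes of \eqref{prop_subring}; the only thing to keep in mind is that $\mathrm{ev}_v$ sends $1$ to $1$, so the subring contains the multiplicative identity.

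For part \eqref{prop_SR-ideal}, I would argue in two steps. First, $\mathcal{I}_Q \subseteq \mathrm{Int}[Q,\lambda]$: a generator $\prod_{j=1}^k x_{i_j}$ with $F_{i_1}\cap\dots\cap F_{i_k} = \emptyset$ must, for every vertex $v = F_{s_1}\cap\dots\cap F_{s_n}$, contain some index $i_j \notin \{s_1,\dots,s_n\}$ (otherwise $\{F_{i_1},\dots,F_{i_k}\}$ would all meet at $v$, contradicting $F_{i_1}\cap\dots\cap F_{i_k}=\emptyset$); by rule (i), $z^v_{i_j} = 0$, so $\mathrm{ev}_v\big(\prod_j x_{i_j}\big) = 0 \in \ZZ[u_1,\dots,u_n]$. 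Hence every generator, and therefore every element of $\mathcal{I}_Q$, lies in $\mathrm{Int}[Q,\lambda]$. Second, $\mathcal{I}_Q$ is closed under multiplication by elements of $\mathrm{Int}[Q,\lambda]$: if $g \in \mathcal{I}_Q$ and $f \in \mathrm{Int}[Q,\lambda] \subseteq \ZZ[x_1,\dots,x_m]$, then $fg \in \mathcal{I}_Q$ simply because $\mathcal{I}_Q$ is already an ideal of the larger ring $\ZZ[x_1,\dots,x_m]$. Combining, $\mathcal{I}_Q$ is an ideal of $\mathrm{Int}[Q,\lambda]$.

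I do not anticipate a serious obstacle here; the statement is essentially formal once one recognizes the vertex substitutions as ring homomorphisms. The one point requiring a little care is the combinatorial claim used in \eqref{prop_SR-ideal} --- that a non-face $\{F_{i_1},\dots,F_{i_k}\}$ cannot be entirely contained in the facet set $\{F_{s_1},\dots,F_{s_n}\}$ defining a vertex $v$ --- which follows because in a simple polytope the facets through a vertex $v$ form a face (they all contain $v$), so any subcollection of them has nonempty intersection, forcing $F_{i_1}\cap\dots\cap F_{i_k} \neq \emptyset$ whenever $\{i_1,\dots,i_k\}\subseteq\{s_1,\dots,s_n\}$. That is the contrapositive of what we need. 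Everything else is bookkeeping with the definitions of subring and ideal.
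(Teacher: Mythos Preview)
Your proposal is correct and follows essentially the same route as the paper's own proof: for part~\eqref{prop_subring} the paper simply declares it ``almost obvious,'' while you make explicit the reason (each $\mathrm{ev}_v$ is a ring homomorphism and $\mathrm{Int}[Q,\lambda]$ is an intersection of preimages of the subring $\ZZ[u_1,\dots,u_n]$); for part~\eqref{prop_SR-ideal} both you and the paper show that each Stanley--Reisner generator evaluates to $0$ at every $z^v$ via rule~(i), hence lies in $\mathrm{Int}[Q,\lambda]$, and then use that $\mathcal{I}_Q$ is already an ideal of the ambient ring. Your added justification of the combinatorial point---that a non-face cannot sit inside the facet set of a vertex---is a detail the paper leaves implicit.
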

\begin{proof}
The first statement is almost obvious. 
Next, if a monomial $f(x_1, \dots x_m)=\prod_{j=1}^k x_{i_j}$ 
is an element of $\mathcal{I}_Q$, it follows from item (i) above 
that $f(z^v)=0$ for all $v\in V(Q)$. 
Hence, $\mc{I}_Q$ is not only a subset, 
but also an ideal of ${\rm Int}[Q, \lambda]$.
\end{proof}

\begin{definition}\cite[Section 5]{BSS}
The {\it weighted Stanley--Reisner ring} $w\mc{SR}[Q, \lambda]$ of an 
$\mc{R}$-characteristic pair $(Q, \lambda)$, associated to a simple lattice polytope, 
is the subring of the Stanley--Reisner ring $\mathcal{SR}[Q]$ of $Q$ defined by 
the quotient:
$$w\mc{SR}[Q, \lambda]:= {\rm Int}[Q, \lambda] / \mathcal{I}_Q.$$
\end{definition}

We remark that if a simple lattice polytope is Delzant, i.e., normal vectors associated to 
facets intersecting a vertex form a $\ZZ$-basis, then $\Lambda_{v}^{-1}$ has integer entries. 
Hence, $f(z^v)$ is polynomial with integer coefficients for all $v\in V(Q)$, which says 
that $w\mc{SR}[Q, \lambda]$ is the usual Stanley--Reisner ring of a simple lattice polytope. Hence, 
$w\SR[Q, \lambda]$ contains geometric data about $X(Q,\lambda)$, 
including singularities, in addition to the combinatorial information about $Q$. 

\begin{theorem}\cite[Theorem 5.3]{BSS}\label{thm_cohom_of_X}
Let $X(Q, \lambda)$ be a projective toric variety over a simple lattice polytope $Q$ 
with $H^{odd}(X(Q, \lambda);\ZZ)=0$. Then, there is an isomorphism 
$$H^\ast(X(Q, \lambda);\ZZ) \cong w\SR[Q, \lambda]/\mathcal{J},$$
where $\mc{J}$ is the ideal generated by the linear elements 
$ \sum_{i=1}^m \left< \lambda_i, \mbf{e}_j \right>x_i$ for $j=1, \dots, n,$
where $\mbf{e}_j$ denotes the $j$-th standard unit vector in $\ZZ^n$.
\end{theorem}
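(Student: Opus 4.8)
The plan is to derive the presentation from equivariant cohomology together with the localization theorem, following \cite{BSS}. First I would consider the Borel fibration $X(Q,\lambda)\overset{\iota}{\hookrightarrow} ET\times_T X(Q,\lambda)\xrightarrow{\pi} BT$. Since $X(Q,\lambda)$ carries a $\mathbf{q}$-CW structure whose cells are all even-dimensional (one $2k$-dimensional $\mathbf{q}$-cell for each $k$-face of $Q$; see \cite{PS} and Proposition \ref{prop_ret_build}) and $H^{odd}(X(Q,\lambda);\ZZ)=0$ by hypothesis, every term $E_2^{p,q}=H^p(BT;H^q(X(Q,\lambda);\ZZ))$ of the Serre spectral sequence of $\pi$ vanishes unless both $p$ and $q$ are even; hence the spectral sequence collapses. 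Consequently $H^*_T(X(Q,\lambda);\ZZ):=H^*(ET\times_T X(Q,\lambda);\ZZ)$ is a free module over $H^*(BT;\ZZ)=\ZZ[u_1,\dots,u_n]$, and $\iota^*$ induces an isomorphism $H^*(X(Q,\lambda);\ZZ)\cong H^*_T(X(Q,\lambda);\ZZ)\otimes_{\ZZ[u_1,\dots,u_n]}\ZZ$, exactly as in \eqref{btfree}. It therefore suffices to identify $H^*_T(X(Q,\lambda);\ZZ)$ with $w\SR[Q,\lambda]$ as a $\ZZ[u_1,\dots,u_n]$-algebra, with $H^{>0}(BT;\ZZ)$ corresponding to the ideal $\mathcal{J}$.

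Second, I would build the comparison map. Each facet $F_i$ carries a $T$-invariant suborbifold $\pi^{-1}(F_i)$ of real codimension $2$ (Remark \ref{rmk_preimage_of_face}); let $\tau_i\in H^2_T(X(Q,\lambda);\ZZ)$ be its equivariant Thom class, so that $x_i\mapsto\tau_i$ defines a ring map $\ZZ[x_1,\dots,x_m]\to H^*_T(X(Q,\lambda);\ZZ)$. Freeness over $H^*(BT;\ZZ)$ together with finiteness of the fixed set $X(Q,\lambda)^T=V(Q)$ implies, via the localization theorem, that the restriction $H^*_T(X(Q,\lambda);\ZZ)\to H^*_T(X(Q,\lambda)^T;\ZZ)=\bigoplus_{v\in V(Q)}\ZZ[u_1,\dots,u_n]$ is injective. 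A computation in the orbifold charts of \cite[Section 2]{PS} shows that at a vertex $v=F_{s_1}\cap\cdots\cap F_{s_n}$ the class $\tau_j$ restricts to $0$ when $F_j\not\ni v$, while $(\tau_{s_1}|_v,\dots,\tau_{s_n}|_v)^t=\Lambda_v^{-1}(u_1,\dots,u_n)^t$; that is, the composite $\ZZ[x_1,\dots,x_m]\to\bigoplus_v\ZZ[u_1,\dots,u_n]$ is precisely $f\mapsto(f(z^v))_{v\in V(Q)}$ with the vectors $z^v$ defined above. Its kernel is exactly the Stanley--Reisner ideal $\mathcal{I}_Q$ (a squarefree monomial $\prod_j x_{i_j}$ with $\bigcap_j F_{i_j}=\emptyset$ restricts to $0$ at every $v$, and conversely every generator of $\mathcal{I}_Q$ has this form), and its image lies in ${\rm Int}[Q,\lambda]$ by the very definition of that ring; so the image is a copy of $w\SR[Q,\lambda]$ sitting inside $H^*_T(X(Q,\lambda);\ZZ)$.

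Third, I would show that this image exhausts $H^*_T(X(Q,\lambda);\ZZ)$. Rationally this is the classical identification of $H^*_T(X(Q,\lambda);\QQ)$ with the ring of piecewise polynomials on the fan dual to $Q$ (\cite[Proposition 2.2]{BFR}); for the integral statement I would induct along a retraction sequence $\{(B_j,E_j,b_j)\}_{j=1}^\ell$ of $Q$, using the cofibrations from Proposition \ref{prop_ret_build} to compute $H^*_T$ one $\mathbf{q}$-cell at a time and matching each step with the corresponding summand of $\bigoplus_v\ZZ[u_1,\dots,u_n]$, the absence of torsion (forced by $H^{odd}(X(Q,\lambda);\ZZ)=0$) being what keeps the integral bookkeeping exact. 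This yields $H^*_T(X(Q,\lambda);\ZZ)\cong w\SR[Q,\lambda]$. To finish, I restrict the linear form $\sum_{i=1}^m\langle\lambda_i,\mbf{e}_j\rangle x_i$ to $v=F_{s_1}\cap\cdots\cap F_{s_n}$: only the indices $s_1,\dots,s_n$ contribute, and rule (ii) of the construction gives $\sum_{a=1}^n\langle\lambda(F_{s_a}),\mbf{e}_j\rangle z^v_{s_a}=u_j$, this being the $j$-th entry of $\Lambda_v\Lambda_v^{-1}(u_1,\dots,u_n)^t$. By the injectivity established above, $\sum_{i=1}^m\langle\lambda_i,\mbf{e}_j\rangle x_i=u_j$ already in $H^*_T(X(Q,\lambda);\ZZ)$, so $\mathcal{J}$ is exactly the ideal generated by $H^{>0}(BT;\ZZ)$; quotienting as in the first paragraph gives $H^*(X(Q,\lambda);\ZZ)\cong w\SR[Q,\lambda]/\mathcal{J}$.

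The main obstacle is the surjectivity in the third step: proving that the Thom classes $\tau_i$ — equivalently, the weighted Stanley--Reisner ring — generate $H^*_T(X(Q,\lambda);\ZZ)$ over $\ZZ$ and not merely over $\QQ$. In the non-smooth (orbifold) setting the cones of the dual fan need not be unimodular, so one cannot invoke the Danilov--Jurkiewicz presentation directly, and one must keep control of the denominators produced by the matrices $\Lambda_v^{-1}$; this is precisely the data that the definition of ${\rm Int}[Q,\lambda]$ encodes, and it is the retraction-sequence induction — the same mechanism underlying Theorem \ref{thm_no-torsion} — that makes the integral computation go through.
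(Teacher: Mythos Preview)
The paper does not prove this theorem; it is quoted from \cite[Theorem 5.3]{BSS} without argument. Your overall strategy (collapse of the Serre spectral sequence, injectivity of restriction to fixed points, identification with piecewise polynomials, retraction--sequence induction for surjectivity) is indeed the approach of \cite{BSS} and \cite{BFR}, so at the level of architecture there is nothing to compare.

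There is, however, a genuine inconsistency in your second paragraph that would derail the argument as written. You declare $\tau_i\in H^2_T(X(Q,\lambda);\ZZ)$ and then assert that its restriction to a fixed point $v=F_{s_1}\cap\cdots\cap F_{s_n}$ is the $i$-th entry of $\Lambda_v^{-1}(u_1,\dots,u_n)^t$. But the entries of $\Lambda_v^{-1}$ are rational in the orbifold case (that is the whole point: $\Lambda_v$ is not unimodular), so an integral class cannot restrict to $z^v_i$. Consequently your ``composite $\ZZ[x_1,\dots,x_m]\to\bigoplus_v\ZZ[u_1,\dots,u_n]$'' does not exist as stated, and the sentence ``its image lies in ${\rm Int}[Q,\lambda]$'' confuses domain and codomain: ${\rm Int}[Q,\lambda]$ is by definition a subring of the source $\ZZ[x_1,\dots,x_m]$, not of the target.

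The fix is to reverse the direction of the construction. Work first over $\QQ$: the assignment $x_i\mapsto z^v_i$ gives an injection $\SR[Q]\otimes\QQ\hookrightarrow\bigoplus_v\QQ[u_1,\dots,u_n]$, and $H^*_T(X;\QQ)$ is identified with its image (the piecewise polynomials). Since $H^*_T(X;\ZZ)$ is torsion free it embeds in $H^*_T(X;\QQ)$, and $w\SR[Q,\lambda]={\rm Int}[Q,\lambda]/\mathcal I_Q$ is \emph{defined} to be exactly the subring of $\SR[Q]$ mapping into $\bigoplus_v\ZZ[u_1,\dots,u_n]$. The substantive step is then to prove that this subring coincides with the image of $H^*_T(X;\ZZ)$; both inclusions require work, and it is here that the retraction--sequence induction you mention in your final paragraph is actually deployed. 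In short, the individual $x_i$ are \emph{not} integral equivariant classes in general, and the integrality condition defining ${\rm Int}[Q,\lambda]$ is what carves out $H^*_T(X;\ZZ)$ inside the rational Stanley--Reisner ring, not the other way round.
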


\subsection{The cohomology ring of $X_{(J)}$}
In this final section, we study the relationship between the cohomology ring of 
$X$ and  that of $X_{(J)}:=X(Q_{(J)}, \lambda_{(J)})$. 
The next theorem follows directly 
from Theorem \ref{thm_no-torsion} and Theorem \ref{thm_cohom_of_X}. 
\begin{theorem}\label{thm_cohom_of_X(J)}
Let $(Q, \lambda)$ be an $\mathcal{R}$-characteristic pair satisfying 
the hypothesis of Theorem \ref{thm_no-torsion}. Then, 
$$H^\ast(X_{(J)};\ZZ) \cong w\SR[Q_{(J)}, \lambda_{(J)}] / \mathcal{J}_{(J)}, $$
where $\mathcal{J}_{(J)}$ is the ideal generated by 
\begin{equation}\label{eq_ideal_I(J)}
\left\{\sum_{i=1}^m \langle \lambda_i, e_j\rangle  x_{i1}~\Big|~  j=1, \dots, n\right\}\cup 
 \left\{ x_{it}=x_{i1}\mid  t=2, \dots, j_i  \right\}.
 \end{equation}
\end{theorem}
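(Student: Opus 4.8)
The plan is to feed the $\mathcal{R}$-characteristic pair $(Q_{(J)}, \lambda_{(J)})$ into Theorem \ref{thm_cohom_of_X}, so the first task is to verify that the hypotheses of that theorem are met. By Lemma \ref{lem_ret_simj}, the pair $(Q_{(J)}, \lambda_{(J)})$ inherits from $(Q,\lambda)$ the retraction-sequence condition, for every prime $p$; hence by Theorem \ref{thm_no-torsion} we get $H^{\mathrm{odd}}(X_{(J)};\ZZ)=0$ and torsion-freeness, which is exactly what Theorem \ref{thm_cohom_of_X} needs. One subtlety: Theorem \ref{thm_cohom_of_X} is stated for \emph{projective} toric varieties over a simple \emph{lattice} polytope, so I would first remark that, after choosing a realization of $Q$ as a simple lattice polytope whose inward primitive normals give $\lambda$, the wedge polytope $Q_{(J)}$ can likewise be realized as a simple lattice polytope with inward primitive normals given by the columns of $\Lambda_{(J)}$ in \eqref{eq_lambda(J)_matrix} — this is the lattice-polytope incarnation of the polytopal wedge construction of Subsection \ref{subsec_polytopal_wedge}, and the normals in \eqref{eq_lambda(J)_matrix} are manifestly primitive. (Alternatively one invokes the fact, used throughout, that the cohomology computation of \cite[Theorem 5.3]{BSS} / \cite[Proposition 2.2]{BFR} depends only on the $\mathcal{R}$-characteristic pair and the $\mathbf{q}$-CW structure, not on a projective embedding.)

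Granting this, Theorem \ref{thm_cohom_of_X} applied to $(Q_{(J)},\lambda_{(J)})$ gives
$$H^\ast(X_{(J)};\ZZ)\cong w\SR[Q_{(J)},\lambda_{(J)}]\big/\Big\langle\,\textstyle\sum_{(i,k)}\langle \lambda_{(J)}(F_{ik}),\mathbf{e}_j\rangle\, x_{ik}\ \big|\ j=1,\dots,d(J)-m+n\,\Big\rangle,$$
where the sum runs over all facets $F_{ik}$ of $Q_{(J)}$ and $\mathbf{e}_j$ are the standard basis vectors of $\ZZ^{d(J)-m+n}$. So the remaining work is purely bookkeeping: to show that this linear ideal equals $\mathcal{J}_{(J)}$ of \eqref{eq_ideal_I(J)}. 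I would split the basis $\mathbf{e}_1,\dots,\mathbf{e}_{d(J)-m+n}$ into the $d(J)-m$ coordinates coming from the identity blocks $I_{j_i-1}$ in \eqref{eq_lambda(J)_matrix} and the last $n$ coordinates coming from the $\Lambda$-block. For a coordinate $\mathbf{e}_j$ in an $I_{j_i-1}$-block (say the one recording $w_{it}$ for some $t\in\{2,\dots,j_i\}$), reading off column by column from \eqref{eq_lambda(J)_matrix} one gets $\langle\lambda_{(J)}(F_{it}),\mathbf{e}_j\rangle=1$, $\langle\lambda_{(J)}(F_{i1}),\mathbf{e}_j\rangle=-1$ (the $-1$ in the last column-block), and all other pairings zero; this produces exactly the linear relation $x_{it}-x_{i1}$, i.e.\ the second family $\{x_{it}=x_{i1}\}$ in \eqref{eq_ideal_I(J)}. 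For the last $n$ coordinates, the pairing picks out the $\Lambda$-block, which contributes $\langle\lambda_i,\mathbf{e}_j\rangle$ against each of the columns $w_{i1}$, and contributes nothing against the columns $w_{it}$, $t\ge 2$; this gives $\sum_{i=1}^m\langle\lambda_i,\mathbf{e}_j\rangle\,x_{i1}$, the first family in \eqref{eq_ideal_I(J)}. Hence the two linear ideals coincide, and the claimed presentation follows.

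The main obstacle — really the only nontrivial point — is the first paragraph: making sure one is genuinely entitled to invoke Theorem \ref{thm_cohom_of_X} for $X_{(J)}$, i.e.\ that $X_{(J)}$ is (equivariantly homeomorphic to) a projective toric variety over a simple lattice polytope, or else citing the more general form of the weighted-Stanley--Reisner computation that bypasses projectivity. Once that is in place, the identification of $\mathcal{J}_{(J)}$ with the linear ideal of Theorem \ref{thm_cohom_of_X} is a direct reading of the block structure of \eqref{eq_lambda(J)_matrix}, and the computation of $H^{\mathrm{odd}}=0$ is already done in Theorem \ref{thm_X_satisfies_assump_then_X(J)_eq_formal}. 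I would therefore keep the written proof short: cite Theorem \ref{thm_no-torsion} (via Lemma \ref{lem_ret_simj}) for vanishing of odd cohomology, cite Theorem \ref{thm_cohom_of_X} for the abstract presentation, and then devote a few lines to the column-by-column identification of the linear generators above.
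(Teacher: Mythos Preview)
Your proposal is correct and matches the paper's approach exactly: the paper's entire proof is the single sentence ``The next theorem follows directly from Theorem \ref{thm_no-torsion} and Theorem \ref{thm_cohom_of_X},'' and you have simply unpacked what that sentence entails. Your column-by-column identification of the linear ideal with $\mathcal{J}_{(J)}$ and your remark about needing $Q_{(J)}$ to be a simple lattice polytope (so that Theorem \ref{thm_cohom_of_X} applies) are details the paper leaves implicit, but they are correct and do not constitute a different route.
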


 If $X$ is a smooth toric manifold, then the cohomology ring $H^\ast(X_{(J)};\ZZ)$
 can be dramatically simplified, because 
 $w\SR[Q_{(J)}, \lambda_{(J)}]=\SR[Q_{(J)}, \lambda_{(J)}]$, which 
 tells us that the ring is generated by degree $2$ elements 
 and the second part of \eqref{eq_ideal_I(J)} reduces the degree $2$ elements 
 to the same generators as $w\SR[Q, \lambda]$. See \cite[Section 4]{BBCG15} for details. 
However, in general, the cohomology ring $H^\ast(X;\ZZ)$ of a toric orbifold $X$ is not
generated by degree $2$ elements, hence the multiplication structure has plenty of 
divisibility because of singularities.

Recall that $X_{(J)}$ can be obtained from 
$X$ by a sequence of simplicial wedge constructions.
Consider now the polytopal wedge 
$Q_{(J')}:=Q_{(F_i)}$ of the original simple lattice polytope $Q$ for 
some facet $F_i$ of $Q$. 
    
We finish this paper by studying the vectors $\{z^{v^\epsilon} \mid v^\epsilon \in V(Q_{(J')}) \}$, 
with respect to the $\mathcal{R}$-characteristic pair $(Q_{(J')}, {\lambda_{(J')}})$.
Recall $V(Q_{(J')})$ from \eqref{eq_vertices_of_Q_tilde}.
Theorem \ref{thm_J-fied_int_cond} below tells us how to get
$$z^{v^{\epsilon}}=(z^{v^\epsilon}_0, z^{v^\epsilon}_1, \dots, z^{v^\epsilon}_m) \in 
\bigoplus_{m+1} \ZZ[u_0, u_1, \dots, u_n]$$
from $\{z^v \mid v\in V(Q)\}$,  for each $v\in V(Q)$ and $\epsilon=+$ or  $-$.

\begin{remark}
The polynomial ring $\ZZ[u_0, u_1, \dots, u_n]$ stands for $H^\ast(BT^{n+1};\ZZ)$,
where $T^{n+1}$ is the $(n+1)$-dimensional torus acting on $X_{(J')}$. 
The canonical embedding of $T^n$, the acting torus on $X(Q, \lambda)$, into 
the last $n$-coordinates of $T^{n+1}$ yields a canonical surjection
$\ZZ[u_0, u_1, \dots, u_n] \twoheadrightarrow  \ZZ[u_1, \dots, u_n]$. See
Remark \ref{rmk_u_i-variables}.
\end{remark}

According to \eqref{eq_wedge_of_K} and \eqref{eq_vertices_of_Q_tilde}, 
we have the following 3 types of vertices in $Q_{(J')}$; 
\begin{align}
	\label{eq_type1}&v^-_{i_r}=Q^+ \cap Q^- \cap \bigcap_{a=1}^{n-1}{F_{s_a}}_{(F_i \cap F_{s_a})}
		\text{ for some } v=F_i \cap  \bigcap_{a=1}^{n-1} F_{s_a} \in V(Q), \\
	\label{eq_type2}& v^+= Q^+ \cap \bigcap_{a=1}^n (F_{s_a} \times I)
	 	\text{ for some } v=\bigcap_{a=1}^n F_{s_a} \text{ and } v\notin V(F_i),  \\
	\label{eq_type3}&v^-= Q^- \cap \bigcap_{a=1}^n (F_{s_a} \times I)
	 	\text{ for some } v=\bigcap_{a=1}^n F_{s_a} \text{ and } v\notin V(F_i). 
\end{align}
We refer also to \eqref{eq_v_in_F_i_intersection_of_facets} and 
\eqref{eq_vertex_in_Q(J)_not_in_F_i} for the indexing of facets in the above three cases. 

\begin{theorem}\label{thm_J-fied_int_cond}
The weighted Stanley--Reisner ring of $(Q_{(J')}, \lambda_{(J')})$ is related to 
that of $(Q, \lambda)$ as follows. 
For each vertex of one of the three types \eqref{eq_type1}, 
\eqref{eq_type2} and \eqref{eq_type3} above, we have:
\begin{enumerate}
\item\label{item_(1)} $z^{v_{i_r}^-}= (u_0+z^{v_{i_r}}_i, z^{v_{i_r}}_1, \dots, z^{v_{i_r}}_m)$ for $r=1, \dots, k$;
\item\label{item_(2)} $z^{v^+}= (u_0, z^{v}_1, \dots, z^{v}_m)$;
\item\label{item_(3)}    $z^{v^-}=(0, z^{v^-}_1, \dots, z^{v^-}_m)$, where 
$$z^{v^-}_\ell=\begin{cases} 0 & \text{ if } \ell \notin \{s_1, \dots, s_n\} \\ 
\gamma_\ell u_0 + z^{v}_\ell & \text{ if } \ell \in \{s_1, \dots, s_n\} \end{cases} ~ \text{ and }~ 
\begin{bmatrix} \gamma_{1} \\ \vdots \\ \gamma_{n} \end{bmatrix} 
=\Lambda_{v}^{-1}\cdot \lambda(F_i)^t.$$
\end{enumerate}
\end{theorem}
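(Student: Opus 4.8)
The plan is to compute each $z^{v^\epsilon}$ directly from its defining condition, namely rule (ii) on page~\pageref{int_vectors}, applied to the $\mathcal{R}$-characteristic pair $(Q_{(J')},\lambda_{(J')})$, and then recognize the result in terms of $z^v$. The key input is that the local matrices $\Lambda_{(J')}{}_{v^\epsilon}$ were already computed in the proofs of Lemma~\ref{lem_loc_goup_isom_for_v_0} and Lemma~\ref{lem_loc_group_isom_for_v_pm}: each is an $(n+1)\times(n+1)$ block matrix obtained from $\Lambda_v$ (or $\Lambda_{v_{i_r}}$) by adjoining one row and one column recording the extra coordinate $u_0$. Since $z^{v^\epsilon}$ is by definition obtained by applying $(\Lambda_{(J')}{}_{v^\epsilon})^{-1}$ to the column vector $(u_0,u_1,\dots,u_n)^t$, everything reduces to inverting these explicit bordered matrices and reading off the entries, placing zeros in the slots indexed by facets not meeting $v^\epsilon$.

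First I would handle case \eqref{eq_type1}: here $v_{i_r}^- = Q^+\cap Q^-\cap\bigcap_a {F_{s_a}}_{(F_i\cap F_{s_a})}$, and from the proof of Lemma~\ref{lem_loc_goup_isom_for_v_0} the relevant matrix is
$$
\Lambda_{(J')}{}_{v_{i_r}^-}=\begin{bmatrix} 1 & -1 & 0\\ 0 & \lambda(F_i)^t & \Lambda' \end{bmatrix},
$$
with columns indexed by $Q^+, Q^-, {F_{s_1}}_{(\cdot)},\dots$. Inverting a matrix of this bordered form is a one-line block computation: the first row of the inverse contributes the $u_0$-term, and comparison with $\Lambda_{v_{i_r}}^{-1}(u_1,\dots,u_n)^t = (z^{v_{i_r}}_{i}, z^{v_{i_r}}_{s_1},\dots)^t$ (recall $v_{i_r}=F_i\cap\bigcap_a F_{s_a}$, so the $Q^-$-slot matches the $F_i$-slot of the original) yields $z^{v_{i_r}^-}_{Q^-} = u_0 + z^{v_{i_r}}_i$ and $z^{v_{i_r}^-}_{F_{s_a}} = z^{v_{i_r}}_{s_a}$, while the $Q^+$-slot is forced to be $0$ by the shape of the inverse. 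Reindexing back to facets of $Q_{(J')}$ (identifying $Q^-$ with the $i$-th coordinate and ${F_{s_a}}_{(\cdot)}$ with the $s_a$-th) gives exactly item~\eqref{item_(1)}. Cases \eqref{eq_type2} and \eqref{eq_type3} are entirely analogous, using instead the matrix
$$
\Lambda_{(J')}{}_{v^\pm}=\begin{bmatrix} \pm 1 & 0 \\ \lambda(F_i)^t & \Lambda_v \end{bmatrix}
$$
from the proof of Lemma~\ref{lem_loc_group_isom_for_v_pm}; when $\epsilon=+$ the off-diagonal $\lambda(F_i)^t$ sits below a $+1$ and does not disturb the lower block, giving the clean $z^{v^+}=(u_0,z^v_1,\dots,z^v_m)$, whereas when $\epsilon=-$ inverting the matrix feeds a correction $\Lambda_v^{-1}\lambda(F_i)^t =: (\gamma_1,\dots,\gamma_n)^t$ into each nonzero slot, producing $z^{v^-}_\ell = \gamma_\ell u_0 + z^v_\ell$ for $\ell\in\{s_1,\dots,s_n\}$ and $0$ otherwise, which is item~\eqref{item_(3)}; the $u_0$-slot works out to $0$ here as well.

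The main obstacle is bookkeeping rather than mathematics: one must be scrupulous about the column indexing of $\Lambda_{(J')}$ from \eqref{eq_column_index_of_lambda(J)} versus the facet labeling of $Q_{(J')}$, and about the sign conventions in $\Lambda_{(J')}{}_{v^\pm}$, so that the slot that carried $F_i$ in the original vector $z^v$ is correctly matched to either $Q^+$, $Q^-$, or the wedged facet in $z^{v^\epsilon}$. I would also double-check that the primitivity normalization in \eqref{eq_lambda_E}, which in principle intervenes when passing to the induced $\mathcal{R}$-characteristic functions on $Q^\pm$, does not alter these formulas — but by the Remark following \eqref{eq_char_matrix_J'} the induced functions ${\lambda_{(J')}}_{Q^\pm}$ coincide with $\lambda$ on the nose, so no rescaling occurs and the block computations go through verbatim.
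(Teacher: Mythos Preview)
Your overall approach---compute the explicit inverse of each bordered matrix ${\Lambda_{(J')}}_{v^\epsilon}$ and apply it to $(u_0,u_1,\dots,u_n)^t$---is exactly what the paper does, so the strategy is sound. However, two of your computations are off, and they are precisely the bookkeeping errors you warned yourself about.

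For type \eqref{eq_type1} you have the $Q^+$ and $Q^-$ slots swapped. With your column ordering $(Q^+,Q^-,F_{s_1},\dots)$, the first row of the inverse is $(1,d_1,\dots,d_n)$ where $(d_1,\dots,d_n)$ is the first row of $\Lambda_{v_{i_r}}^{-1}$; the remaining rows are $(0\mid \Lambda_{v_{i_r}}^{-1})$. Applying this to $(u_0,u_1,\dots,u_n)^t$ gives the \emph{$Q^+$}-slot equal to $u_0+z^{v_{i_r}}_i$ and the \emph{$Q^-$}-slot equal to $z^{v_{i_r}}_i$, not the other way around; in particular the $Q^+$-slot is not zero. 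After the reindexing you describe (the $0$-th coordinate is $Q^+$, the $i$-th is $Q^-$), this is item~\eqref{item_(1)}.

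For type \eqref{eq_type2} your matrix for $v^+$ is wrong: since $\lambda_{(J')}(Q^+)=(1,0,\dots,0)^t$, the first column of ${\Lambda_{(J')}}_{v^+}$ has zeros below the $1$, so the matrix is block-diagonal $\left[\begin{smallmatrix}1&0\\0&\Lambda_v\end{smallmatrix}\right]$, not $\left[\begin{smallmatrix}1&0\\\lambda(F_i)^t&\Lambda_v\end{smallmatrix}\right]$. With your (incorrect) matrix, the claim that the lower-left block ``does not disturb the lower block'' is false: inverting $\left[\begin{smallmatrix}1&0\\c&A\end{smallmatrix}\right]$ gives $\left[\begin{smallmatrix}1&0\\-A^{-1}c&A^{-1}\end{smallmatrix}\right]$, which contributes a $-\gamma_\ell u_0$ term to each $F_{s_\ell}$-slot. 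The clean formula $z^{v^+}=(u_0,z^v_1,\dots,z^v_m)$ holds precisely because the lower-left block is actually zero. Your handling of type \eqref{eq_type3} is fine and matches the paper. The remark about primitivity is unnecessary here: the vectors $z^{v^\epsilon}$ are defined directly from the matrices ${\Lambda_{(J')}}_{v^\epsilon}$ built out of $\lambda_{(J')}$, so \eqref{eq_lambda_E} never enters.
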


\begin{proof}
The proof follows from the direct computation of the inverse of 
${\Lambda_{(J')}}_{v^\epsilon}$
for each $\epsilon=+$ and $-$. If $v$ is a vertex in $F_i$, i.e., $v=v_{i_r}$
for some $i_r\in \{i_1, \dots, i_k\}$, and 
$v_{i_r}=F_{i} \cap F_{s_1} \cap \dots \cap F_{s_{n-1}}$, 
then $\Lambda_{v}=
\left[ \begin{array}{c|c|c|c} \lambda(F_i)^t & \lambda(F_{s_1})^t &
\cdots & \lambda(F_{s_{n-1}})^t \end{array}\right]$ 
and 
$$
{\Lambda_{(J')}}_{v^-_{i_r}}=
	\left[\begin{array}{ccccc}1&-1& 0&\cdots& 0 \\ 
         0&  &   &  &    \\
          \vdots  &\lambda(F_i)^t    &  \lambda(F_{s_1})^t   
          &\cdots & \lambda(F_{s_{n-1}})^t  \\
          0&  & & &  \end{array} \right].$$
Its inverse is 
$${\Lambda_{(J')}}_{v^-_{i_r}}^{-1}
=\left[\begin{array}{c|ccc} 
1  & d_{1}&\cdots & d_{n}\\ \hline
0     &&&\\
\vdots&& \Lambda_{v}^{-1}& \\
0     && & \end{array}\right],$$
where $(d_1, \dots, d_n)$ is the first row of $\Lambda_{v}^{-1}$. 
Hence, by (i) and (ii) in page \pageref{int_vectors}, we conclude \eqref{item_(1)}. 

Next, we assume that $v=F_{s_1} \cap \dots \cap F_{s_n}$ with $i\notin \{s_1, \dots, s_n\}$. 
The square matrices corresponding to  
$v^+=Q^+ \cap \bigcap_{a=1}^n (F_{s_a} \times I)$ and 
$v^-=Q^- \cap \bigcap_{a=1}^n (F_{s_a} \times I)$ are 
\begin{align*}
{\Lambda_{(J')}}_{v^+}&=\left[\begin{array}{c|ccc}1          &   0       &   \cdots  &   0 \\\hline
                                               0&     &   & \\
                                            \vdots      &          &  \Lambda_{v} &  \\
                                            0&   &    &   \end{array}\right]=
\left[\begin{array}{cccc}
1& 0& \cdots &0  \\ 
0&&&\\
\vdots&  \lambda(F_{s_1})^t &\cdots&  \lambda(F_{s_n})^t \\
0&&&
\end{array}\right], \\
{\Lambda_{(J')}}_{v^-} &=
\left[ \begin{array}{c|ccc}
1 & 0 & \cdots & 0\\ \hline 
&&&\\
\lambda(F_i)^t & & \Lambda_v & \\
&&&
\end{array}
\right]=
\left[\begin{array}{cccc}
-1& 0& \cdots &0  \\ 
&&&\\
\lambda(F_i)^t&  \lambda(F_{s_1})^t &\cdots&  \lambda(F_{s_n})^t \\
&&&
\end{array}\right].                                                            
\end{align*}
respectively. Their inverses are 
\begin{equation}\label{eq_type_2_inverse}
{\Lambda_{(J')}}_{v^+}^{-1}=
	\left[\begin{array}{c|ccc}1          &   0       &   \cdots  &   0 \\ \hline
                                               0&     &   & \\
                                            \vdots      &          &  \Lambda_{v}^{-1}         &  \\
                                            0&   &    &   \end{array}\right]
\end{equation}
and 
\begin{equation}\label{eq_type_3_inverse}
{\Lambda_{(J')}}_{v^-}^{-1} = \left[ \begin{array}{c|ccc}
-1 & 0 & \cdots & 0\\ \hline
\gamma_1 & & &\\
\vdots & &\Lambda_v^{-1} &\\
\gamma_n & & &
\end{array}\right], \text{ where } 
\begin{bmatrix} \gamma_1 \\ \vdots \\ \gamma_n \end{bmatrix} 
        =\Lambda_{v}^{-1}\cdot \lambda(F_i)^t.
\end{equation}
Now, the results for \eqref{item_(2)} and \eqref{item_(3)} are straightforward from 
\eqref{eq_type_2_inverse} and \eqref{eq_type_3_inverse}, respectively. 
\end{proof}

%
%\bibliographystyle{amsalpha}
%\bibliography{ref-Wedge}

\end{document}